\newtheorem{theorem}{Theorem}[section]
\newtheorem{lemma}[theorem]{Lemma}
\newtheorem{prop}[theorem]{Proposition}
\theoremstyle{definition}
\newtheorem{definition}[theorem]{Definition}
\newtheorem{corollary}[theorem]{Corollary}
\newtheorem{question}[theorem]{Question}
\newtheorem{proposition}[theorem]{Proposition}
\newtheorem{remark}[theorem]{Remark}
\numberwithin{equation}{section}
\newcommand{\trace}{\text{Tr}}
\begin{document}

\title[Left-Orderability, Branched Covers and Double Twist Knots]{Left-Orderability, Branched Covers\\and Double Twist Knots}

\bibliographystyle{alpha}

\author{Hannah Turner}
\address{Department of Mathematics, University of Texas at Austin}
\email{hannahturner@math.utexas.edu}





\begin{abstract}
For some families of two-bridge knots, including double-twist knots with genus at least four, we determine precisely the set of integers $n>1$ such that the fundamental group of the $n$-fold cyclic branched cover of the 3-sphere along these knots is left-orderable. There are knots, including the figure-eight knot, for which this set is empty. We give the first class of hyperbolic knots, not of this type, for which these integers can be completely determined.
\end{abstract}

\maketitle

\section{Introduction}


A non-trivial group $G$ is called \textit{left-orderable} if it admits a strict total ordering $(G,<)$ that is left-invariant, i.e. whenever $g<h$ for $g,h\in G$ we also have $fg<fh$ for any $f\in G$. Examples of left-orderable groups coming from topology and dynamics abound. Braid groups \cite{DDRW08}, the group of orientation preserving homeomorphisms of the line $\mbox{Homeo}^+(\mathbb{R})$, and the fundamental group of a connected, compact, orientable 3-manifold with positive first Betti number are all known to be left-orderable \cite{BRW05}. 

However, there are both examples and non-examples of left-orderable groups among fundamental groups of irreducible 3-manifolds which are rational homology spheres. We will call a 3-manifold \textit{left-orderable} if its fundamental group is left-orderable.  According to the L-space conjecture, such a 3-manifold $M$ should be left-orderable if and only if $M$ is \textit{not} a Heegaard Floer L-space if and only if $M$ admits a co-oriented taut foliation \cite{BGW13, Ju15}.

Let $\Sigma_n(L)$ denote the $n$-fold cyclic branched cover of the 3-sphere branched over a link $L$. Such manifolds have provided a rich family on which to investigate the L-space conjecture. The conjecture has been confirmed for $\Sigma_2(L)$ when $L$ is a non-split alternating link \cite{OS05, BGW13} or the closure of a 3-braid \cite{Bal08, BH19, LW14}.

Combined work of Hu  and Gordon shows for a two-bridge knot $K$ with non-zero signature that $\Sigma_n(K)$ is left-orderable for $n$ sufficiently large \cite{Hu15, Go17}. On the other hand there are examples of two-bridge knots all of whose cyclic branched covers are L-spaces \cite{Pe09} and whose fundamental groups are not left-orderable for any index $n\geq 2$ \cite{DPT05}.

\begin{center}
\vspace{8pt}
\includegraphics[scale=.8]{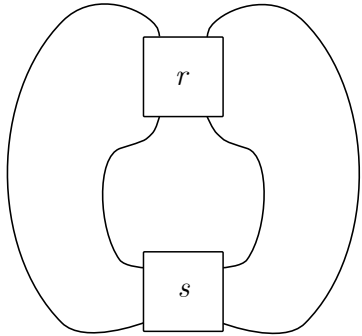} 
\captionof{figure}{The knot $J(r,s)$ where $r$ and $s$ count the number of signed half twists in each box.}
\label{DoubleT}
\end{center}

In this paper we study the left-orderability of $\Sigma_n(K)$ for a class of two-bridge knots. The family of double twist knots is a two-parameter family $J(r,s)$ as in Figure \ref{DoubleT}. For $r$ and $s$ even, we have the following theorems of D\c{a}bkowski-Przytycki-Togha and Tran.

\begin{theorem}[Theorem 2(c) in \cite{DPT05}]\label{DPT thm}
Let $k$ and $m$ be positive integers and $K=J(2k,-2m)$. Then $\Sigma_n(K)$ is not left-orderable for any $n$.
\end{theorem}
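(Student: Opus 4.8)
The plan is to recast the statement as a purely combinatorial assertion about a group presentation and then rule out every left-invariant order directly. Since $K=J(2k,-2m)$ is a genus-one two-bridge knot (with determinant $4km+1$), the group $G:=\pi_1(\Sigma_n(K))$ admits a cyclic presentation $G=\langle x_1,\dots,x_N\mid w_1,\dots,w_N\rangle$, where $N$ is a multiple of $n$ and each $w_j$ is obtained from a single relator $w$ by the index shift $x_i\mapsto x_{i+1}$ (indices mod $N$). First I would write $w$ down explicitly, with its exponents expressed in terms of $k$ and $m$, and rearrange the relation into recursive form $x_{i+d}=f(x_i,\dots,x_{i+d-1})$; for $k=m=1$ this specializes to a Fibonacci-type presentation of $\pi_1(\Sigma_n(4_1))$, the model case to keep in view.

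Next I would dispose of a degenerate case: one checks that if some generator $x_i$ equals the identity, then the defining relations force $G$ to be cyclic, hence finite (as $\Sigma_n(K)$ is a rational homology sphere), hence not left-orderable. So we may assume every $x_i\neq 1$. Suppose, for contradiction, that $G$ carries a left-invariant order with positive cone $P$, and assign to each $x_i$ the sign $\epsilon_i\in\{+,-\}$ recording whether $x_i\in P$ or $x_i^{-1}\in P$.

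Using only that $P$ is closed under multiplication and that $g\in P\iff g^t\in P$ for $t>0$, I would read off from the recursion a list of local patterns that the sign sequence $(\epsilon_1,\dots,\epsilon_N)$ cannot contain. If all the $\epsilon_i$ agree, then left-multiplying the relation $x_{i+d}=f(x_i,\dots,x_{i+d-1})$ by a suitable prefix shows that $x_{i+d}$ lies strictly beyond $x_i$ on the side prescribed by the common sign, and telescoping this strict inequality around a cycle forces $x_i<x_i$, a contradiction. If the $\epsilon_i$ do not all agree but still avoid the forbidden patterns, the sign sequence is forced to be periodic in a way governed by the deck orbits of the relators (for $\pi_1(\Sigma_n(4_1))$ this means strictly alternating); sorting the relations by deck orbit then yields a chain of strict inequalities among the \emph{inverses} $x_i^{-1}$ which again closes up around a cycle and contradicts itself.

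The step I expect to be the real obstacle is this last one. A left-invariant order need not be preserved by inversion or conjugation, so the argument must multiply only on the left throughout; and for large $k$ and $m$ the look-back $d$ of the relator grows, so the family of sign sequences surviving the forbidden-pattern test is much richer than the single alternating pattern of the figure-eight case. Pushing the chain-of-inequalities argument through uniformly in $n$, $k$, $m$ is exactly where the precise exponents of $w$ --- in particular those forced by the sign $-2m$ rather than $+2m$ --- have to be exploited. If this becomes unwieldy in the $N$-generator presentation, the fallback is to run the same scheme in the unreduced presentation of $G$, in which every exponent is $\pm 1$ and the sign bookkeeping is transparent, and then transfer the conclusion back to $G$.
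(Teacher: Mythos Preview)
The paper does not prove this statement; Theorem~\ref{DPT thm} is quoted verbatim from \cite{DPT05} as background, and no argument for it appears anywhere in the present article. So there is no ``paper's own proof'' to compare against, and your write-up should be measured against the original source, not this one.

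That said, your plan is in the spirit of the D\c{a}bkowski--Przytycki--Togha argument: cyclic presentation of $\pi_1(\Sigma_n(K))$, sign analysis of the generators under a hypothetical left order, and a telescoping contradiction around the cycle. But what you have submitted is a strategy, not a proof. Several of the load-bearing steps are left as assertions: you have not written down the relator $w$ for general $k,m$; you have not proved the claim that $x_i=1$ forces the group to be cyclic; you have not specified the ``forbidden local patterns'' or shown they exhaust the non-constant sign sequences; and you explicitly flag the decisive case analysis (non-constant sign sequence, general $k,m$) as ``the real obstacle'' and offer only a fallback. In the actual \cite{DPT05} argument this is exactly where the work is, and it is handled by a uniform inequality that exploits the precise shape of the relator rather than by an open-ended case split. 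As written, your proposal does not close the argument, so it is not yet a proof---it is an outline whose hard step is still a conjecture.
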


\begin{theorem}[Theorem 1 in \cite{Tr15}]\label{Tran thm}
Let $k$ and $m$ be positive integers and $K=J(2k,2m)$. Then $\Sigma_n(K)$ is left-orderable for $n\geq \pi/\cos^{-1}\sqrt{1-(4km)^{-1}}$.
\end{theorem}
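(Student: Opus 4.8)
The plan is to produce, for each $n$ in the stated range, an $SL_2(\mathbb{R})$-representation of $\pi_1(\Sigma_n(K))$ with non-elementary image, and then to invoke the Boyer--Gordon--Watson machinery; the real content is a character-variety computation for $K=J(2k,2m)$. We first set up the reduction. Since $J(2k,2m)$ is a two-bridge, hence prime, knot, $\Sigma_n(K)$ is irreducible for all $n\geq2$, and its Alexander polynomial may be normalized as $\Delta_K(t)=km(t-1)^2+t$, whose two roots $e^{\pm i\alpha}$ lie on the unit circle with $\cos\alpha=1-\tfrac{1}{2km}$, equivalently $\cos(\alpha/2)=\sqrt{1-(4km)^{-1}}$. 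Thus the hypothesis ``$n\geq\pi/\cos^{-1}\sqrt{1-(4km)^{-1}}$'' says precisely $\tfrac{\pi}{n}<\tfrac{\alpha}{2}$ --- the bound $2\pi/\alpha$ is irrational, so ``$\geq$'' and ``$>$'' agree over integers --- equivalently $4km\sin^{2}(\pi/n)\leq1$. If $e^{i\alpha}$ is itself an $n$-th root of unity then $b_1(\Sigma_n(K))>0$ and left-orderability is automatic; otherwise $\Sigma_n(K)$ is a rational homology sphere, and by the criterion of Boyer--Gordon--Watson \cite{BGW13}, in the form used by Hu \cite{Hu15}, it is enough to construct an irreducible representation $\bar\rho\colon\pi_1(S^3\setminus K)\to PSL_2(\mathbb{R})$ sending a meridian $\mu$ to an elliptic element of rotation angle $2\pi/n$: such a $\bar\rho$ factors through the orbifold group $\pi_1(S^3\setminus K)/\langle\langle\mu^n\rangle\rangle$ and restricts to a representation of the index-$n$ subgroup $\pi_1(\Sigma_n(K))$ with non-elementary image (which is non-trivial, indeed non-elementary, since $\bar\rho$ is irreducible and $\bar\rho(\mu)$ has order $n\geq3$, and finite-index subgroups of non-elementary subgroups are non-elementary).

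To set up the computation, write $\pi_1(S^3\setminus K)=\langle a,b\mid wa=bw\rangle$ in the standard two-bridge form with $a,b$ meridians and $w$ the associated relator word, and use the normal form $\rho_{s,u}(a)=\bigl(\begin{smallmatrix}s&1\\0&s^{-1}\end{smallmatrix}\bigr)$, $\rho_{s,u}(b)=\bigl(\begin{smallmatrix}s&0\\-u&s^{-1}\end{smallmatrix}\bigr)$; the relation $wa=bw$ is then equivalent to the vanishing of the Riley polynomial $\phi_K(s,u)$, which for $J(2k,2m)$ admits an explicit closed form --- a polynomial in $s+s^{-1}$ and $u$ coming from the Chebyshev recursion attached to the continued fraction of $K$. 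Put $s=e^{i\theta}$, so that $\bar\rho_{s,u}(\mu)$ is elliptic of angle $2\theta$; with $x=2\cos\theta$ and $z=\trace\rho_{s,u}(ab)=2\cos2\theta-u$ the relevant Fricke discriminant is $\kappa=\trace[a,b]-2=2x^2+z^2-x^2z-4=u\,(u+4\sin^{2}\theta)$. Since the meridian traces lie in $[-2,2]$, the representation is irreducible and conjugate into $SL_2(\mathbb{R})$ exactly when $\kappa>0$, i.e. when $u>0$ or $u<-4\sin^{2}\theta$, and conjugate into $SU(2)$ exactly when $-4\sin^{2}\theta<u<0$.

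It remains to show that, for $\theta=\pi/n$, the equation $\phi_K(e^{i\theta},u)=0$ has a real solution $u$ in the ``hyperbolic region'' $\{u>0\}\cup\{u<-4\sin^{2}\theta\}$ precisely when $4km\sin^{2}\theta\leq1$, i.e. $\theta<\alpha/2$. The expected mechanism is: as $\theta\to0^{+}$ (the parabolic limit) the relevant branch $u=u(\theta)$ of $\phi_K(e^{i\theta},\,\cdot\,)=0$ tends to the real parabolic Riley root, which lies in the hyperbolic region; this branch stays real and can leave that region only at the reducible locus $\kappa=0$, which it reaches exactly at the parameter where $\Delta_K$ acquires the root $e^{2i\theta}$, namely at $2\theta=\alpha$, after which it passes into the $SU(2)$ region (or leaves $\mathbb{R}$). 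Concretely this reduces to showing that the $u$-discriminant of the relevant factor of $\phi_{J(2k,2m)}(e^{i\theta},u)$ vanishes precisely at $\cos\theta=\sqrt{1-(4km)^{-1}}$. Granting this, for $n$ in the stated range one has $\theta=\pi/n<\alpha/2$, so the desired $\bar\rho$ exists, and the reduction above completes the proof.

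The crux --- and the likely main obstacle --- is this last step: putting the Riley polynomial of $J(2k,2m)$ into a form concrete enough to isolate the branch that is real and of $SL_2(\mathbb{R})$-type at $s=e^{i\pi/n}$, and to prove that it leaves the real axis (or crosses $\kappa=0$) exactly at $4km\sin^{2}(\pi/n)=1$; that is, that the discriminant governing the disappearance of this real root factors so that its zero set is $\sin\theta=1/(2\sqrt{km})$. A cleaner route may be to produce the sought $SL_2(\mathbb{R})$-representation directly, as a deformation of the real parabolic representation (or as an $SL_2(\mathbb{R})$-character adjacent to an $SU(2)$-character along the character curve), using the factorization $\Delta_K(t)=km(t-1)^2+t$ to pin the critical angle; this is, in effect, Hu's two-bridge argument specialized so as to extract the sharp constant $4km$. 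Only routine bookkeeping is needed for the black-boxed Boyer--Gordon--Watson reduction (the lifting/Euler-number issue in passing to $\pi_1(\Sigma_n(K))$, handled in \cite{BGW13,Hu15}) and for the boundary value $n=2\pi/\alpha$, which cannot occur.
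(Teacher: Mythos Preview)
This theorem is not proved in the paper; it is quoted from Tran's paper \cite{Tr15} as background (Theorem~\ref{Tran thm} is stated with attribution ``Theorem 1 in \cite{Tr15}'' and no proof is given). So there is no ``paper's own proof'' to compare against. What the paper \emph{does} prove is the analogous and sharper statement for the odd-twist case $J(2k+1,2m)$ (Theorem~\ref{Polybigtheorem}), and that proof is a useful point of comparison.

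Your proposal is a strategy, not a proof. You set up the reduction correctly --- two-bridge Riley polynomial, meridian trace $2\cos(\pi/n)$, conjugacy into $SL_2(\mathbb{R})$ when the Fricke invariant has the right sign, and the Alexander-polynomial interpretation of the threshold $4km\sin^2(\pi/n)=1$ --- but you explicitly stop short of the main step: you say ``granting this'' and call the location of the real Riley root in the hyperbolic region ``the crux --- and the likely main obstacle,'' and you never carry out the discriminant/deformation argument. As written, nothing in your text establishes that the relevant branch of $\phi_K(e^{i\theta},u)=0$ stays real and in the region $\kappa>0$ for all $\theta<\alpha/2$; you only describe the ``expected mechanism.'' That is the entire content of the theorem, so the proposal has a genuine gap.

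For comparison, Tran's argument (and the paper's own argument in the odd case, Section~\ref{roots}) avoids discriminant or deformation reasoning entirely. One writes the Riley polynomial $\phi_K(x,y)$ explicitly in Chebyshev form via Lemmas~\ref{bettersequence}--\ref{matrixpower}, fixes $x=2\cos(\pi/n)$, and finds two values of $y$ in $(2,\infty)$ at which $\phi_K$ has opposite signs, one coming from the leading-term asymptotics (Lemma~\ref{leading}) and one from a direct evaluation using $\lambda(x,2)=x^2-2$ and the roots of $S_{m-1}$ (Lemmas~\ref{anylambda}, \ref{Sbehav}). The intermediate value theorem then produces $y_n>2$, and Theorem~\ref{itsallaboutRiley} finishes. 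This is both more elementary and more robust than tracking a branch through the reducible locus; if you want to complete your proof, the fastest route is to drop the discriminant heuristic and mimic the sign-change argument of Section~\ref{roots} for the even case $J(2k,2m)$, where the Chebyshev expressions are in fact simpler.
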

 
Tran also gave bounds on the index $n$ for which $\Sigma_n(K)$ is left-orderable for $K=J(2k+1,2m)$. These bounds grow with $k$ and $m$ \cite[Theorem 2]{Tr15}. 

We note if $r$ and $s$ are even, then the three-genus is $g(J(r,s))=1$. On the other hand, any double-twist knot with $g(J(r,s))>1$ can be written $J(2k+1,2m)$ where $g(J(2k+1,2m))=|m|$, see Section \ref{notation}. Improving on Tran's result, for double-twist knots with genus at least four, we completely classify the indices $n$ for which $\Sigma_n(K)$ is left-orderable. If the genus is two or three, we decide these indices with one or two exceptions. 

\begin{theorem}\label{Bigtheorem} Let $k\geq 1$ be fixed. Then $\Sigma_n(J(2k+1,2m))$ is left-orderable in the following cases:

\begin{enumerate}
\item $n\geq 3$ when $m\leq -3$
\item $n\geq 4$ when $m=-2$
\item $n\geq 5$ when $m=2$
\item $n\geq 4$ when $m=3$
\item $n\geq 3$ when $m\geq 4$.
\end{enumerate}

\noindent In cases $(1)$, $(2)$ and $(5)$, $\Sigma_n(J(2k+1,2m))$ is left-orderable if and only if the index $n$ satisfies the corresponding inequality.
\end{theorem}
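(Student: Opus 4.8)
The plan is to study the fundamental group $\pi_1(\Sigma_n(J(2k+1,2m)))$ via representations into $\widetilde{\mathrm{Homeo}}^+(S^1)$ or, more classically, to produce nonabelian representations into $\SLR$ or $\mathrm{PSL}_2(\mathbb{R})$ and promote them to left-orderings using the standard dynamical criterion (a nontrivial, countable group is left-orderable iff it acts faithfully on $\mathbb{R}$ by order-preserving homeomorphisms, and such actions arise from reducible/parabolic $\widetilde{\mathrm{PSL}_2(\mathbb{R})}$ representations together with the Boyer--Rolfsen--Wiest type arguments). Concretely, I would use the known presentation of the branched cover group: since $J(2k+1,2m)$ is a two-bridge knot $\mathfrak{b}(p,q)$, the group $\pi_1(\Sigma_n(K))$ is generated by lifts of meridians and its abelianization and character variety can be computed from the Riley polynomial / the trace of the two-bridge relator. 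The first step is therefore to write down the relevant recursion (Chebyshev-like, governed by a matrix whose trace controls whether an irreducible $\SLR$ representation exists) and identify the interval of trace values producing real, hyperbolic-or-elliptic holonomy.

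The core of the positive direction (showing left-orderability for $n$ at least the stated bound) is an explicit construction: for each residue class of $m$, I would exhibit an irreducible representation $\rho\colon\pi_1(\Sigma_n(K))\to\mathrm{PSL}_2(\mathbb{R})$ whose Euler number vanishes (so it lifts to $\widetilde{\mathrm{PSL}_2(\mathbb{R})}$), and then invoke the standard fact that a group admitting such a lift with nontrivial image is left-orderable — this is essentially Tran's method refined. The improvement over Tran's bounds comes from a sharper analysis of when the defining trace condition $\cos(\pi/n)$ lands in the admissible window: the window depends on $k$ and $m$, but the key observation must be that for $|m|\geq 4$ the window is large enough to contain $\cos(\pi/3)=1/2$ uniformly in $k$, for $|m|=2,3$ it only reaches $\cos(\pi/n)$ for the larger $n$ listed, and so on. I expect the bulk of the work is a careful case analysis of a single inequality (monotonicity in $k$ and in $m$) together with checking the Euler-number/lifting condition holds.

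For the converse (that in cases (1), (2), (5) the bound is sharp — i.e., $\Sigma_n(K)$ is \emph{not} left-orderable for small $n$), the plan is different and this is where I expect the main obstacle. One route: for the excluded small $n$, show $\Sigma_n(K)$ is a Heegaard Floer L-space (e.g. $\Sigma_2$ of a two-bridge knot is always an L-space, and $\Sigma_3$ of suitable ones can be a Seifert fibered L-space or a graph manifold L-space), hence not left-orderable by known implications for such spaces; but that only handles $\Sigma_2$ cleanly and possibly $\Sigma_3$ for $m\geq 4$. A more robust route, and the one I would actually pursue, is to rule out left-orderability directly by showing every $\mathrm{PSL}_2(\mathbb{R})$ representation of $\pi_1(\Sigma_n(K))$ is trivial/abelian for these $n$ \emph{and} that the group is an L-space group / finite / has no real character variety component of positive dimension — or, most cleanly, to identify these small branched covers explicitly (spherical or Seifert fibered over $S^2$ with three exceptional fibers and non-left-orderable $\pi_1$, as in the Dąbkowski--Przytycki--Togha situation) and cite that their $\pi_1$ is not left-orderable. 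The delicate point is $\Sigma_3$ in case (5) ($m\geq 4$), where one must genuinely prove non-left-orderability of a hyperbolic-base or large $\pi_1$ — here I would try to show $\pi_1(\Sigma_3)$ has no left-ordering by a finite-index or explicit-relator obstruction (e.g. exhibiting it as a nontrivial quotient forcing a torsion-like relation incompatible with orderability, or by a computer-assisted search over the real character variety showing it is empty apart from the abelian locus). Reconciling the sharpness claim with the genuinely hyperbolic manifolds involved is the hardest part; for genus two and three the theorem only claims the answer "with one or two exceptions," which is consistent with exactly this difficulty being unresolved in those cases.
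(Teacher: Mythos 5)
Your positive direction is broadly the right strategy and matches the paper's in outline: one finds a nonabelian $SL(2,\mathbb{R})$ representation of the knot group sending the $n$-th power of a meridian to $\pm I$ (Hu's criterion, which already handles the passage to a left-orderable cover $\widetilde{SL(2,\mathbb{R})}$ without any separate Euler-number computation), and the existence of such a representation reduces to finding a real root $y_n>2$ of the Riley polynomial $\phi_K(2\cos(\pi/n),y)$. What your sketch omits is the entire content of that step. The paper writes $\phi_K$ for the presentation with relator $w^ma=bw^m$ in terms of Chebyshev polynomials, $\phi_K=S_{m-1}(\lambda)\alpha-S_{m-2}(\lambda)$ (and the analogous expression for $m<0$), determines the sign of $\phi_K$ as $y\to\infty$, and then produces a point $y_c\ge 2$ of the opposite sign by steering $\lambda(x_n,y)$ onto the root $c=2\cos\bigl(\tfrac{(m-2)\pi}{m-1}\bigr)$ of $S_{m-2}$, where the sign of $S_{m-1}$ is controlled; the intermediate value theorem finishes. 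The "admissible window" you gesture at is precisely the inequality $c\le x_n^2-2$, which is indeed independent of $k$, so your intuition about uniformity in $k$ is correct, but without identifying $c$ and the sign analysis there is no proof.

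The converse is where your plan goes genuinely wrong, because you have misread which indices must be excluded. In cases (1) and (5) the claim is "left-orderable iff $n\ge 3$," so the only index to rule out is $n=2$, and $\Sigma_2$ of a two-bridge knot is a lens space, hence has finite (non-left-orderable) fundamental group. In case (2) one must additionally rule out $n=3$; there $g(J(2k+1,-4))=2$, and Ba's theorem states that $\Sigma_3$ of a genus-two two-bridge knot is an L-space with non-left-orderable fundamental group. That is the entire sharpness argument. Your proposed "delicate point" — proving non-left-orderability of $\Sigma_3$ in case (5) — is not part of the statement at all: for $m\ge 4$ the theorem asserts $\Sigma_3$ \emph{is} left-orderable, which belongs to the positive direction. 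Consequently the machinery you propose for the converse (computer-assisted character variety searches, explicit Seifert-fibered identifications of hyperbolic branched covers) is both unnecessary and, for genuinely hyperbolic $\Sigma_3$'s, would not succeed; no hyperbolic manifold is ever shown to be non-left-orderable in this proof.
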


\begin{remark} Now the only cyclic branched covers of double twist knots with genus at least two for which left-orderability remains unknown are $\Sigma_4(J(2k+1,4))$  and $\Sigma_3(J(2k+1,6))$. For the proof of this statement see the discussion at the end of the section. 
\end{remark}

Theorem \ref{Bigtheorem} answers a question of Boileau, Boyer and Gordon in the affirmative.
They show $\Sigma_n(J(2k+1,2m))$ is not an L-space for index $n\geq 4$ when $m=-2$ and for index $n\geq 3$ when $m\leq -3$, and ask if these manifolds are left-orderable for the same indices \cite[Problem 12.11]{BBG19}. The L-space conjecture predicts that $\Sigma_n(K)$ should also admit a taut foliation for these same indices. In the case where $K=J(2k+1,2m)$ with $m$ odd  and negative, Gordon and Lidman showed that $\Sigma_n(K)$ is an integer homology sphere, and admits a taut foliation when $n$ divides $m$ \cite{GL14}. 


The techniques in Theorem \ref{Bigtheorem} can be extended to families of two-bridge knots which are not double-twist knots. The simplest extension might be to two-bridge knots with three twist-box regions. While in many cases our techniques could prove left-orderability even for low-index branched covers of these knots, our method of proof became more involved as the twist parameters grew. Nevertheless, we include one such generalization of Theorem \ref{Bigtheorem}.

\begin{center}
\vspace{8pt}
\includegraphics[scale=.25]{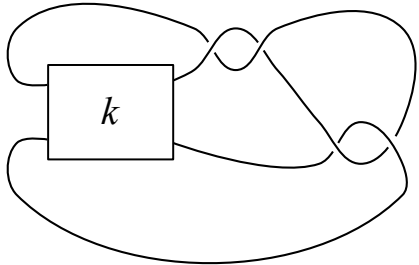} 
\captionof{figure}{The knot $K_l$ where $k=2l-1$ counts the signed half twists in the box.}
\label{notDT}
\end{center}

Let $K_l$ denote the knot in Figure \ref{notDT}. We assume that $l\geq 2$ in which case, it can be shown that $K_l$ is \textit{not} a double-twist knot, and $g(K_l)=l$.

\begin{theorem}\label{bigtwistythm}
 $\Sigma_n(K_l)$ is left-orderable in the following cases:

\begin{enumerate}
\item $n\geq 5$ when $l=2$
\item $n\geq 4$ when $l=3$
\item $n\geq 3$ when $l\geq 4$.
\end{enumerate}
 
 \noindent For case $(3)$, $\Sigma_n(K_l)$ is left-orderable if and only if the index $n\geq 3$.
 
\end{theorem}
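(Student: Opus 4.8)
The plan is to mimic the strategy that proves Theorem \ref{Bigtheorem}, since $K_l$ is a two-bridge knot and its cyclic branched covers can be studied through essentially the same representation-theoretic machine. First I would compute a presentation of $\pi_1(\Sigma_n(K_l))$: starting from a two-bridge presentation of $\pi_1(S^3 \setminus K_l)$ with its natural cyclic symmetry, one passes to the $n$-fold branched cover and obtains a group generated by $n$ conjugates of a meridian with relations coming from the Wirtinger/two-bridge relator and the cyclic identification. The key point, exactly as in the double-twist case, is that left-orderability of such a group can be established by producing a nontrivial homomorphism to $\mathrm{Homeo}^+(\mathbb{R})$ (equivalently $\mathrm{Homeo}^+(S^1)$ together with a Euler-class vanishing / lifting argument), and such homomorphisms are built from $\SLR$-representations of $\pi_1(\Sigma_n(K_l))$ with controlled rotation numbers. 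Concretely, one looks for $\SLR$ representations that arise as real points on (a branch of) the character variety of the knot group, restricted to the subvariety cut out by the branched-cover condition $\rho(\mu)^n = \pm I$, i.e. $\mathrm{Tr}\,\rho(\mu) = 2\cos(j\pi/n)$ for a suitable $j$.

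The central computation is therefore to understand the Riley-type polynomial (or the relevant trace identity) for $K_l$ and to locate its real roots in the interval corresponding to branched-cover meridian traces. I would follow these steps in order: (i) write down the two-variable polynomial $\Phi_{K_l}(s,u)$ whose vanishing parametrizes irreducible $\SLR$ (or nonabelian) representations, where $s$ records the meridian trace and $u$ an auxiliary trace; (ii) substitute the branched-cover constraint $s = 2\cos(\pi/n)$ (or the appropriate $2\cos(j\pi/n)$) and analyze, as a polynomial in $u$, when it has a real root giving a genuinely $\SLR$ (not $\SU(2)$) representation — this is where the dependence on $l$ enters, since the degree and coefficients of $\Phi_{K_l}$ grow with $l$; (iii) check that the resulting representation is nontrivial and has the right properties (nonabelian image, correct rotation/translation number behavior) so that the Boyer–Rolfsen–Wiest and Calegari–Walker type criteria upgrade it to a left-ordering of $\pi_1(\Sigma_n(K_l))$; (iv) for the ``only if'' direction in case (3), invoke that for $l \ge 4$ the relevant obstruction (an $\SU(2)$-only phenomenon, or directly the non-left-orderability input from a result like Theorem \ref{DPT thm}'s method, or the fact that $\Sigma_2(K_l)$ is an L-space and hence not left-orderable by the confirmed L-space conjecture for alternating links) rules out $n=2$, while $n \ge 3$ is covered by the constructions above.

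The main obstacle I anticipate is step (ii): controlling the sign and location of the real roots of the specialized polynomial uniformly in $l$ (and in the hidden twist parameter $k = 2l-1$ inside the other box), so that one gets a real representation for every $n \ge 3$ when $l \ge 4$ but only for $n \ge 4$ or $n \ge 5$ in the small-genus cases $l = 3, 2$. This amounts to a delicate Sturm-sequence or intermediate-value argument on a family of polynomials of growing degree, and the genus threshold $l \ge 4$ is presumably exactly where the sign pattern stabilizes — mirroring the role of the genus-$\ge 4$ hypothesis in Theorem \ref{Bigtheorem}. A secondary subtlety is ensuring the representation lands in $\SLR$ rather than $\SU(2)$ for the low-index covers (this is what forces the exceptions at $n=3$ for $l=3$ and $n=3,4$ for $l=2$), which requires tracking the discriminant of the trace polynomial along the branch. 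Once the analytic input is in place, the passage from an irreducible $\SLR$ representation with the correct translation numbers to an honest left-ordering of the branched cover group is standard and I would cite it rather than reprove it.
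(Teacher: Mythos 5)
Your overall strategy is the right one and is the one the paper uses: produce a nonabelian $\SLR$-representation of $\pi_1(X_{K_l})$ sending a meridian $Z$ to an element with $\rho(Z^n)=\pm I$, and invoke the Hu/Boyer--Rolfsen--Wiest machinery (Theorem \ref{HuLO} together with Khoi's conjugation criterion, packaged here as Theorem \ref{itsallaboutRiley}) to conclude left-orderability; the ``only if'' in case (3) is then just the observation that $\Sigma_2$ of a two-bridge knot is a lens space, so $n=2$ is excluded. However, as written your argument has a genuine gap: everything that actually produces the stated thresholds $n\geq 5,\,4,\,3$ is deferred to your ``main obstacle'' paragraph and never carried out. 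Concretely, two computations are missing. First, you need an explicit two-bridge presentation of $\pi_1(X_{K_l})$ on which the Riley polynomial can be controlled uniformly in $l$; since $K_l$ is not a double-twist knot, the $w^ma=bw^m$ presentation of Section \ref{Riley} does not apply, and the paper instead identifies $K_l=K(10(l-1)+7,\,4(l-1)+3)$, computes the sign sequence $S(p,q)$ via the Hirasawa--Murasugi reduction, and extracts a relator of the form $v=c^{l-1}d$ for fixed words $c,d$. Second, that specific form is what makes the root-finding tractable: it yields $\phi_{K_l}(x,y)=S_{l-1}(\lambda)\alpha-S_{l-2}(\lambda)\beta$ with $\lambda,\alpha,\beta$ explicit, and the intermediate value argument hinges on the identities $\lambda(x,x^2-1)=1$, $\alpha(x,x^2-1)=-1$, the monotonicity of $\alpha$ in $y$, and choosing $\lambda=2\cos\bigl(\tfrac{(l-2)\pi}{l-1}\bigr)$ to kill the $S_{l-2}$ term. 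The inequality $2\cos\bigl(\tfrac{(l-2)\pi}{l-1}\bigr)\leq x_n^2-2$ is precisely what produces the cutoffs at $n=3,4,5$; without exhibiting this (or an equivalent Sturm-type analysis), the quantitative content of the theorem is unproved. A ``Sturm sequence on polynomials of growing degree'' is not a viable substitute here without the Chebyshev structure, because the degree of $\phi_{K_l}$ in $y$ grows linearly in $l$ and a case-by-case discriminant analysis would not terminate.

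Two smaller corrections. The criterion guaranteeing the representation lands in $\SLR$ rather than $SU(2)$ is not a discriminant condition along a branch but simply the reality and size of the auxiliary root: by Khoi's result it suffices that $y>2$ be a real root of $\phi_{K_l}(2\cos(\pi/n),y)$, which is why the whole proof reduces to a sign analysis on $[2,\infty)$. And for the ``only if'' direction of case (3), of your three suggested justifications only the lens-space/L-space one is correct; the method of Theorem \ref{DPT thm} is specific to $J(2k,-2m)$ and gives nothing here, and ``an $SU(2)$-only phenomenon'' is not an obstruction to left-orderability.
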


Teragaito showed that there are two-bridge knots, with arbitrarily large genus, all of whose cyclic-branched covers are L-spaces by exhibiting them as double-branched covers of alternating knots \cite{Te14}. All of the cyclic branched covers of these knots are also not left-orderable \cite[Theorem 8]{BGW13}. On the other hand, Theorems \ref{Bigtheorem} and \ref{bigtwistythm} give evidence that there is a relationship between the properties of the L-space conjecture for $\Sigma_n(K)$ for a knot $K$ and its three-genus if $\Sigma_n(K)$ is left-orderable for some $n$. We point out that results relating genus and the L-space conjecture in branched cyclic covers have been achieved by Ba in the following theorems.

\begin{theorem}[Theorem 1.3 in \cite{Ba19}]\label{Ba} Let $K$ be a two-bridge knot with $g(K)=2$. Then $\Sigma_3(K)$ is an L-space, and is not left-orderable.
\end{theorem}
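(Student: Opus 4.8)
The plan is to realize $\Sigma_3(K)$ as the double branched cover $\Sigma_2(L)$ of a non-split alternating link $L$, and then to invoke Ozsv\'ath--Szab\'o \cite{OS05} (double branched covers of non-split alternating links are L-spaces) together with \cite[Theorem 8]{BGW13} (their fundamental groups are not left-orderable). This is the same mechanism Teragaito \cite{Te14} uses for branched covers of his family; here the content is to show it applies to \emph{every} genus-two two-bridge knot.

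First I would put $K$ in normal form: a two-bridge knot of genus $g$ has an even continued-fraction expansion $p/q=[2a_1,\dots,2a_{2g}]$ with each $a_i\neq 0$, so $g(K)=2$ forces $p/q=[2a_1,2a_2,2a_3,2a_4]$; this exhibits a genus-two Seifert surface $F$ for $K$ as a linear plumbing of twisted bands, with an explicit $4\times 4$ Seifert matrix $V$. From $F$ one reads off a surgery description of $\Sigma_3(K)$ obtained by cutting the infinite cyclic cover along the three lifts of $F$ and regluing; the framings and linking numbers are built from $V$ and $V^{T}$, and since $F$ is a linear plumbing the resulting surgery link is tree-like, so $\Sigma_3(K)$ is a plumbed manifold (in particular irreducible, and a rational homology sphere once one checks $\Delta_K$ does not vanish at primitive cube roots of unity).

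The core step is to upgrade this to a double-branched-cover description. Two-bridge knots are strongly invertible, and this involution together with the $\mathbb{Z}/3$ deck action should yield a $\mathbb{Z}/2$ symmetry of $\Sigma_3(K)$ with quotient $S^3$ and branch set a link $L$; concretely, one rewrites the plumbing surgery above in an equivariant form and identifies the quotient, tracking signs through the quotient map and normalizing them by continued-fraction (flype) moves so that $L$ carries an alternating diagram, with non-splitness coming from irreducibility of $\Sigma_3(K)$. Granting this, \cite{OS05} and \cite[Theorem 8]{BGW13} finish both halves of the statement. I expect the main obstacle to be exactly this last normalization: controlling the signs $a_1,a_2,a_3,a_4$ uniformly --- mixed signs included --- so that the quotient link is alternating (quasi-alternating would also suffice, via the same two citations). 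A sign case-analysis, or alternatively a direct $d$-invariant computation on the plumbed manifold paired with a presentation-level non-left-orderability argument in the spirit of \cite{DPT05}, is the route I would pursue; one must also genuinely use $g(K)=2$, rather than $g(K)\le 2$, so that the continued fraction has length exactly four.
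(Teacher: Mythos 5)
First, a point of orientation: the paper does not prove this statement at all --- it is imported verbatim as Theorem 1.3 of Ba \cite{Ba19} and used only to show that the bounds in Theorems \ref{Bigtheorem} and \ref{bigtwistythm} are sharp. So there is no in-paper proof to compare against, and your argument has to stand entirely on its own.

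The mechanism you choose --- exhibit $\Sigma_3(K)$ as $\Sigma_2(L)$ for a suitable link $L$ and then quote \cite{OS05} and \cite[Theorem 8]{BGW13} --- is the natural one, and it is exactly how Teragaito's examples \cite{Te14} work: by Birman--Hilden, the $n$-fold cyclic branched cover of a two-bridge knot is the double branched cover of the cyclic closure of $n$ copies of the defining rational tangle, with the involution coming from the hyperelliptic symmetry commuting with the deck group. But as written your argument has a genuine gap, and you flag it yourself: you never establish that the quotient link $L$ is non-split alternating for \emph{every} genus-two two-bridge knot, i.e.\ for every sign pattern $(a_1,a_2,a_3,a_4)$ in the even continued fraction $[2a_1,2a_2,2a_3,2a_4]$. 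For mixed signs the natural diagram of the cyclically symmetrized tangle is not alternating, and there is no reason to expect flypes to repair this uniformly; carrying out that case analysis (or the quasi-alternating induction in the style of Champanerkar--Kofman) is the entire content of the theorem, not a loose end. Until it is done you have a plan, not a proof.

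Two further problems. If you retreat to ``quasi-alternating,'' you keep the L-space half (Ozsv\'ath--Szab\'o/Manolescu--Ozsv\'ath) but lose the non-left-orderability half: \cite[Theorem 8]{BGW13} is a statement about alternating links, and a corresponding off-the-shelf non-left-orderability theorem for double branched covers of quasi-alternating links is not available, so you would need a separate presentation-level argument in the spirit of \cite{DPT05} for that half --- which you mention only as an alternative, not as a required supplement. Separately, the detour through a Seifert-matrix surgery description is both unjustified (the $2g(n-1)=8$-component surgery link obtained by cutting the cyclic cover along lifts of $F$ is not obviously tree-like, so ``$\Sigma_3(K)$ is a plumbed manifold'' does not follow from what you wrote) and unnecessary for the double-branched-cover route; irreducibility and the rational-homology-sphere property come more cheaply from hyperbolicity of $\Sigma_n(K)$ for $n\ge 3$ and from $\Delta_K$ having no root at a primitive cube root of unity.
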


\begin{theorem}[Corollary 1.4 in \cite{Ba19}] Let $K$ be a two-bridge knot with $g(K)=1$. Then $\Sigma_n(K)$ is an L-space, and is not left-orderable for $n\leq 5$.
\end{theorem}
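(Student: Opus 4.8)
The plan is to reduce to double twist knots and split into cases according to $\sigma(K)$. A genus-one two-bridge knot has an even continued fraction expansion $p/q=[2a,2b]$ with $a,b$ nonzero, so, up to mirror image (which affects neither the L-space property nor left-orderability of cyclic branched covers), $K$ is a double twist knot $J(2k,2m)$ or $J(2k,-2m)$ with $k,m\ge 1$. For $K=J(2k,-2m)$ the Alexander polynomial $\Delta_K$ has no zeros on the unit circle, so $\sigma(K)=0$, and these are exactly the knots covered by Theorem \ref{DPT thm}; thus $\Sigma_n(K)$ is not left-orderable for \emph{every} $n$, and such knots are moreover among Peters' examples \cite{Pe09} of two-bridge knots all of whose cyclic branched covers are L-spaces. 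This disposes of that family, so assume $K=J(2k,2m)$ with $k,m\ge 1$; after possibly mirroring, $\sigma(K)=-2$ and $\Delta_K(t)=km\,t-(2km-1)+km\,t^{-1}$, whose unique pair of zeros on the unit circle is $e^{\pm i\theta_0}$ with $\cos\theta_0=1-\tfrac1{2km}\ge\tfrac12$, so $\theta_0\le\pi/3$.

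For the L-space part I would use that $\Sigma_n(K)$ bounds the $n$-fold cyclic cover $W_n$ of $B^4$ branched over a genus-one Seifert surface of $K$ pushed into the interior. Then $b_2(W_n)=2(n-1)$ and, by the $G$-signature theorem, $\sigma(W_n)=\sum_{j=1}^{n-1}\sigma_K(e^{2\pi i j/n})$, where $\sigma_K$ is the Tristram--Levine signature function. Since $\sigma_K(\omega)=-2$ precisely for $\arg\omega\in(\theta_0,2\pi-\theta_0)$, and since $\theta_0\le\pi/3<2\pi/5\le 2\pi/n$ when $n\le 5$, every point $e^{2\pi i j/n}$ with $1\le j\le n-1$ lies in that arc; hence $\sigma(W_n)=-2(n-1)=-b_2(W_n)$, so $W_n$ is negative definite. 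As $K$ is alternating, this is enough to conclude that $\Sigma_n(K)$ is an L-space for $n\le 5$ (compare \cite{OS05} and the computations of \cite{BBG19}). The uniform bound $n\le 5$ appears because $2\pi/\theta_0\ge 6$ for every genus-one $K$, with equality only for the trefoil $J(2,2)$; for the trefoil the argument already breaks at $n=6$ (where $\theta_0=\pi/3$ and $e^{i\pi/3}$ is a zero of $\Delta_{T(2,3)}$, so $\Sigma_6(J(2,2))$ even has positive first Betti number), which is where the genus hypothesis is used.

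The substantive claim is non-left-orderability of $\Sigma_n(K)$ for $n\le 5$, and here one cannot simply cite the L-space conjecture in cases where it is known: for most $k,m$ the manifold $\Sigma_n(K)$ is hyperbolic. I would instead argue with $\pi_1(\Sigma_n(K))$ directly, in the spirit of the rest of the paper. A left-order on $\pi_1(\Sigma_n(K))$ produces a nontrivial action on $\mathbb{R}$; since $\Sigma_n(K)$ is the $n$-fold cyclic branched cover of the two-bridge knot $K$---so $\pi_1(\Sigma_n(K))$ is generated by the lifts of a meridian $\mu$---one wants to promote this to a representation of $\pi_1(S^3\setminus K)$ into $\widetilde{\SLR}$ (or into $\widetilde{\mathrm{Homeo}^+(S^1)}$) taking $\mu$ to an elliptic element of rotation number $1/n$; its projection to $\SLR$ is then an irreducible $\rho$ with $\trace\rho(\mu)=2\cos(\pi/n)$. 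The genus enters arithmetically at this step: by the first paragraph $2\cos(\pi/n)<2\cos(\theta_0/2)=2\sqrt{1-(4km)^{-1}}$ whenever $n\le 5$, that is, the required meridian trace lies strictly below the threshold value appearing in Theorem \ref{Tran thm}; and using the explicit Riley polynomial of $J(2k,2m)$ one checks that for this meridian trace, $2\le n\le 5$, none of the irreducible characters of $\pi_1(S^3\setminus K)$ is realized by a representation into $\widetilde{\SLR}$ carrying the Euler class needed to descend to a left-order on $\pi_1(\Sigma_n(K))$ (these characters are realized by $\mathrm{SU}(2)$-representations), while the reducible characters factor through the finite group $H_1(\Sigma_n(K))$. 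Hence $\pi_1(\Sigma_n(K))$ has no left-order. I expect the main obstacle to be precisely this last implication: showing that a left-order on $\pi_1(\Sigma_n(K))$ must come from such a representation of the knot group, so that ruling out those representations rules out every left-order. That is exactly where the branched-cover and Euler-number techniques developed in this paper are needed, and where the arithmetic of $2\cos(\pi/n)$ must be weighed against the Riley polynomial---the same mechanism underlying Theorems \ref{Bigtheorem} and \ref{bigtwistythm}.
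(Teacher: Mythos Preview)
This statement is not proved in the paper; it is quoted as Corollary~1.4 of \cite{Ba19} and used as a black box to conclude that the bounds in Theorems~\ref{Bigtheorem} and~\ref{bigtwistythm} are sharp. So there is no ``paper's own proof'' to compare against.

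Your proposal, considered on its own, has a genuine gap in the non-left-orderability direction, and you have correctly identified it yourself. The machinery of Section~\ref{non-ab-rep} runs only one way: a suitable root of the Riley polynomial \emph{produces} a left-order on $\pi_1(\Sigma_n(K))$ via Theorem~\ref{HuLO}. There is no converse. A left-order on $\pi_1(\Sigma_n(K))$ gives an action on $\mathbb{R}$, but there is no mechanism that upgrades this to an $\widetilde{\SLR}$-representation of the knot group with the prescribed meridian trace; indeed the paper explicitly cites Gao's examples \cite{Ga17} of hyperbolic non-L-spaces with no nontrivial $PSL(2,\mathbb{R})$ representations at all. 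So ruling out the relevant $\widetilde{\SLR}$-characters (or showing they are all $SU(2)$) does not rule out left-orders. The sentence ``that is exactly where the branched-cover and Euler-number techniques developed in this paper are needed'' is wishful: no such techniques are developed here, and the cited mechanism underlying Theorems~\ref{Bigtheorem} and~\ref{bigtwistythm} is precisely the one-directional implication you would need to reverse.

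Your L-space argument is also incomplete as written. Negative definiteness of the branched cover $W_n$ of $B^4$ is necessary but not by itself sufficient for $\Sigma_n(K)$ to be an L-space; one needs an actual $d$-invariant or correction-term computation (or a quasi-alternating/alternating-surgery argument) on top of it. Citing \cite{OS05} and \cite{BBG19} in passing does not supply this. Ba's actual argument (in \cite{Ba19}) proceeds differently, and the non-left-orderability there is not obtained by obstructing $\widetilde{\SLR}$-representations.
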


Two-bridge knots have lens space branched double covers. These manifolds have finite fundamental groups and hence are not left-orderable. Together with Theorem \ref{Ba}, this allows us to conclude that the bounds we obtain in Theorem \ref{Bigtheorem} are best possible in the cases that $m\leq -2$ or $m\geq 4$, and for Theorem \ref{bigtwistythm} they are best possible for $l\geq 4$.

\subsection{$\widetilde{PSL}(2,\mathbb{R})$-representations of 3-manifold groups}

The method used here to prove left-orderability of 3-manifold groups, is to construct non-trivial $\widetilde{PSL}(2,\mathbb{R})$ representations; this technique has a long history, see eg \cite{EHN81,Hu15,Tr15, CD18}. The $n$-fold cyclic branched covers of hyperbolic knots in the 3-sphere are all hyperbolic for $n\geq 3$ with the exception of the figure-eight knot \cite[Corollary 1.26]{CHK}. The knots we consider are hyperbolic; hence all of the manifolds $\Sigma_n(K)$ in Theorems \ref{Bigtheorem} and \ref{bigtwistythm} are hyperbolic. Thus the manifold groups we consider always have a $PSL(2,\mathbb{C})$ representation, the holonomy representation. 

It is another question altogether whether these 3-manifolds admit a $\widetilde{PSL}(2,\mathbb{R})$ represention or even one into $PSL(2,\mathbb{R})$. Gao described an infinite family of hyperbolic non-L-spaces, which by the L-space conjecture are expected to be left-orderable, with no non-trivial $PSL(2,\mathbb{R})$ representations \cite{Ga17}. It is perhaps surprising then that the manifolds in Theorem \ref{Bigtheorem} are not only all left-orderable but also have $\widetilde{PSL}(2,\mathbb{R})$ representations.

\subsection{The Set of Left-Orderable Indices}

The inspiration to prove Theorem \ref{Bigtheorem} came from a desire to understand the form the following set can take:
\[\mathcal{LO}_{br}(K)=\{n\geq 2: \Sigma_n(K) \mbox{ is left-orderable}\}\]

\noindent for a fixed knot. Boileau, Boyer, and Gordon studied the set 
\[\mathcal{L}_{br}(K)=\{n\geq 2:\Sigma_n(K) \mbox{ is an L-space}\}\]

\noindent for strongly-quasipositive knots \cite{BBG19}. Note that if the L-space conjecture holds for cyclic branched covers of knots in the 3-sphere, then $ \mathcal{LO}_{br}(K)\sqcup \mathcal{L}_{br}(K)=\{n: n\geq 2\}$.

This notation echos that used for the interval of L-space slopes $\mathcal{L}(Y)$ for $Y$ a compact, oriented three-manifold with boundary \cite{RR17}. Evidence suggests, as for $\mathcal{L}(Y)$, there are constraints on the form the sets $\mathcal{L}_{br}(K)$ and $\mathcal{LO}_{br}(K)$ can take. For all known examples $\mathcal{L}_{br}(K)$ is either $\emptyset$, $\{n:2\leq n\leq N\}$ for some $N\geq 2$, or $\{n:n\geq 2\}$. In particular, for strongly quasipositive fibered knots, $\mathcal{L}_{br}(K)\subset \{n: 2\leq n\leq 5\}$ \cite{BBG19}. There are no knots for which $N$ is known to be larger than $5$.

Similarly from the left-orderability perspective, for known examples, $\mathcal{LO}_{br}(K)$ is either $\{n: n\geq N\}$ for some $N\geq 2$, or $\emptyset$. There are no knots for which $N$ is known to be greater than 6.

\begin{question} Is it true that $\mathcal{LO}_{br}(K)$ is either empty or the set $\{n: n\geq N\}$ for some integer $N\geq 2$? Can $N$ be arbitrarily large?
\end{question}

While Theorems \ref{Bigtheorem} and \ref{bigtwistythm} do not completely determine $\mathcal{LO}_{br}(K)$ for $K=J(r,s)$ or $K=K_l$ respectively, they do allow us to give a characterization the behavior of $\mathcal{LO}_{br}(K)$.

\begin{corollary} Let $K=J(r,s)$ with $g(K)\geq 2$, or $K=K_l$ for $l\geq 2$. Then $\mathcal{LO}_{br}(K)$ always takes the form $\{n: n\geq N\}$ with $N\leq 5$.
\end{corollary}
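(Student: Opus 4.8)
The plan is simply to assemble Theorems \ref{Bigtheorem} and \ref{bigtwistythm} with two facts that control the small-index end. First, every two-bridge knot $K$ of positive genus has determinant greater than $1$, so $\Sigma_2(K)$ is a lens space $L(p,q)$ with $p>1$; its fundamental group is finite and nontrivial, hence not left-orderable, so $2\notin\mathcal{LO}_{br}(K)$. Second, by Ba's Theorem \ref{Ba}, if $K$ is two-bridge with $g(K)=2$ then $\Sigma_3(K)$ is not left-orderable, so $3\notin\mathcal{LO}_{br}(K)$ in that case. Both $J(r,s)$ with $g\geq 2$ and $K_l$ with $l\geq 2$ are two-bridge knots (the latter being one of the ``three twist-box'' two-bridge knots discussed before Theorem \ref{bigtwistythm}, with $g(K_l)=l$), so both facts apply.

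Next I would run through the cases. For $K=J(2k+1,2m)$ with $m\leq -3$ or $m\geq 4$, parts $(1)$ and $(5)$ of Theorem \ref{Bigtheorem} give $\mathcal{LO}_{br}(K)=\{n:n\geq 3\}$, using the ``only if'' assertion there together with $2\notin\mathcal{LO}_{br}(K)$; so $N=3$. For $m=-2$, part $(2)$ gives $\mathcal{LO}_{br}(K)=\{n:n\geq 4\}$, so $N=4$. For $m=2$, the genus is $2$, so $\{2,3\}\cap\mathcal{LO}_{br}(K)=\emptyset$ by the two facts above, while part $(3)$ gives $\{n:n\geq 5\}\subseteq\mathcal{LO}_{br}(K)$; hence $\mathcal{LO}_{br}(K)$ equals $\{n:n\geq 4\}$ or $\{n:n\geq 5\}$ depending only on whether $\Sigma_4(J(2k+1,4))$ is left-orderable, and either way has the desired form with $N\leq 5$. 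For $m=3$, the genus is $3$, so only $n=2$ is excluded a priori and part $(4)$ gives $\{n:n\geq 4\}\subseteq\mathcal{LO}_{br}(K)$, so the set is $\{n:n\geq 3\}$ or $\{n:n\geq 4\}$. The argument for $K_l$ is identical: $l\geq 4$ gives $N=3$ from Theorem \ref{bigtwistythm}$(3)$; $l=3$ gives $\{n:n\geq 3\}$ or $\{n:n\geq 4\}$; and $l=2$ has genus $2$, so $\{2,3\}$ is excluded and Theorem \ref{bigtwistythm}$(1)$ forces the set to be $\{n:n\geq 4\}$ or $\{n:n\geq 5\}$. In every case $\mathcal{LO}_{br}(K)=\{n:n\geq N\}$ with $N\leq 5$.

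There is no genuine obstacle here; the only point requiring care is the handful of indices left open by Theorems \ref{Bigtheorem} and \ref{bigtwistythm}, namely $\Sigma_4(J(2k+1,4))$, $\Sigma_3(J(2k+1,6))$, $\Sigma_4(K_2)$ and $\Sigma_3(K_3)$. One must notice that in each of these the lens-space/L-space lower bound and the upper interval coming from the constructed $\widetilde{PSL}(2,\mathbb{R})$ representations are separated by at most one integer, so that regardless of the answer at the undetermined index the resulting set is forced to be an up-set $\{n:n\geq N\}$ — and in all cases the constructed representations already show $N\leq 5$. No new computation beyond citing the stated results is needed.
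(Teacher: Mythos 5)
Your proof is correct and follows essentially the same route the paper takes: it combines Theorems \ref{Bigtheorem} and \ref{bigtwistythm} with the non-left-orderability of the lens space double branched covers and Ba's Theorem \ref{Ba} for the genus-two cases, and observes that in each remaining case at most one index is left undetermined, so the set is forced to be an up-set with $N\leq 5$. The case analysis and the list of open indices match the paper's Remark, so nothing further is needed.
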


\subsection{Outline}
We begin by describing the relationship between the left-orderability of $\Sigma_n(K)$ and roots of a Riley polynomial of $K$ in Section \ref{non-ab-rep}. In Section \ref{Cheb} we include background on Chebyshev polynomials. We give a formula for a Riley polynomial of double-twist knots in Section \ref{Riley}. In Section \ref{roots} we prove Theorem \ref{Bigtheorem} by finding the desired roots of the Riley polynomials. We conclude with the proof of Theorem \ref{bigtwistythm} in Section \ref{moreresults}.

\subsection{Conventions}\label{notation}

Two-bridge knots are the closures of rational tangles, and so have an (non-unique) associated fraction $p/q$ with $-p<q<p$, see \cite{Ka96, BZ03}.We will write $K(p,q)$ to denote the unique two-bridge knot associated to the fraction $p/q$. 

We now note some facts about the family $J(r,s)$:

\begin{enumerate}
\item If $rs$ is odd, then $J(r,s)$ is a link of two components.
\item If $rs=0$ then $K$ is the unknot. 
\item $J(-r,-s)\cong -J(r,s)$.
\item $J(r,s)\cong J(s,r)$.
\end{enumerate}

Excluding the cases of the unknot and links of multiple components, we can consider without loss of generality knots of the form $J(r,2m)$, with $|r|,|m|>0$. The manifolds $\Sigma_n(K)\cong -\Sigma_n(-K)$ are orientation-reversing homeomorphic; we are interested in the fundamental groups of these manifolds so we need only consider one of $K$ or $-K$. Hence we can further assume that $r>0$.

We present no new results in the case that $g(J(r,s))=1$, so we exclude the case that both parameters $r$ and $s$ are even. Thus, we consider double-twist knots of the form $K=J(2k+1,2m)$ with $|m|\geq 2$. Finally, if $k=0$ then $K$ is a $(2,2p+1)$-torus knot for some integer $p$. Gordon and Lidman completely determined the indices for which the branched covers of these knots are left-orderable \cite{GL14, GL17}. In summary, when $K$ is a double-twist knot, we will assume $K=J(2k+1,2m)$ with $|m|\geq 2$ and $k\geq 1$.

\subsection{Acknowledgments}
The author is grateful to her advisor Cameron Gordon for his time, expertise and interest in this project. The author also thanks Jonathan Johnson for many helpful conversations. The author is supported by an NSF graduate research fellowship under grant no. DGE-1610403.

\section{Non-abelian Representations for Two-bridge Knots}\label{non-ab-rep}

This section follows work of Hu \cite{Hu15} to relate left-orderability of branched covers of two-bridge knots to finding roots of certain polynomials. 

A fundamental result of Boyer, Rolfsen and Wiest allows one to prove left-orderability of a (compact, orientable, irreducible) 3-manifold group by instead finding a non-trivial representation into a group known to be left-orderable \cite{BRW05}. The fact that $\widetilde{ SL(2,\mathbb{R})}$ is left-orderable \cite{Ber91} has been exploited to prove that certain 3-manifold groups are left-orderable, including in the proof of the following result.

\begin{theorem}[Theorem 3.1 of \cite{Hu15}]\label{HuLO}Let $K$ be a prime knot in $S^3$ and $X_K$ denote its complement. Let $Z$ be a meridional element of $\pi_1(X_K)$. If there exists a non-abelian representation $\rho:\pi_1(X_K)\to  SL(2,\mathbb{R})$ such that $\rho(Z^n)=\pm I$ then  $\Sigma_n(K)$ is left-orderable.
\end{theorem}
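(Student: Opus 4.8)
The plan is to pass through the branched cover construction and apply the Boyer–Rolfsen–Wiest criterion. Given the non-abelian representation $\rho:\pi_1(X_K)\to SL(2,\mathbb{R})$ with $\rho(Z^n)=\pm I$, first I would observe that $\rho$ descends to a representation of the quotient $\pi_1(X_K)/\langle\!\langle Z^n\rangle\!\rangle$. This quotient is precisely the fundamental group of the $n$-fold cyclic branched cover $\Sigma_n(K)$: the $n$-fold cyclic cover of $X_K$ is obtained by killing $Z^n$ in the kernel of the abelianization map $\pi_1(X_K)\to\mathbb{Z}$, and Dehn-filling the lifted boundary torus along the lift of the meridian fills in the branch locus, which at the level of fundamental groups amounts to additionally normally generating by $Z^n$. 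Composing with $SL(2,\mathbb{R})\to PSL(2,\mathbb{R})$ if one prefers to work mod $\pm I$, we obtain a homomorphism $\bar\rho:\pi_1(\Sigma_n(K))\to SL(2,\mathbb{R})$ (or $PSL(2,\mathbb{R})$).

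The next step is to upgrade this to a $\widetilde{SL(2,\mathbb{R})}$-representation, or rather to verify the hypotheses of the criterion that a non-trivial homomorphism to a left-orderable group, from the fundamental group of a compact orientable irreducible $3$-manifold that is either non-trivial or has a surjection that factors appropriately, forces left-orderability. Concretely, Boyer–Rolfsen–Wiest show that if $G=\pi_1(M)$ for such $M$ and there is a non-trivial homomorphism $G\to L$ with $L$ left-orderable and the image is left-orderable, then $G$ is left-orderable provided the image is, e.g., an amenable or locally indicable quotient, or provided one can lift to $\widetilde{SL(2,\mathbb{R})}$; the cleanest route here is that $\Sigma_n(K)$ is a rational homology sphere, so $\bar\rho$ must be checked to be non-abelian — which follows because the peripheral system of $\pi_1(X_K)$ maps onto the image of $\rho$ and $\rho$ being non-abelian is a property that survives the quotient as long as the image of $Z$ together with one more generator still generate a non-abelian subgroup. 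One then invokes the fact (used throughout, cf.\ \cite{Hu15, Ber91}) that a non-abelian subgroup of $SL(2,\mathbb{R})$ that is non-trivial and whose action lifts to $\widetilde{SL(2,\mathbb{R})}$ yields a left-ordering of $\pi_1(\Sigma_n(K))$, via Euler class vanishing for the induced circle action.

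The main obstacle is precisely the lifting/non-triviality bookkeeping: one must ensure that $\bar\rho$ is non-trivial (not just that $\rho$ is non-abelian), and that the $PSL(2,\mathbb{R})$-representation lifts to the universal cover $\widetilde{PSL(2,\mathbb{R})}$. Non-triviality is the delicate point because it is conceivable a priori that killing $Z^n$ collapses the whole image; here one argues that the longitude $\lambda$ (which commutes with $Z$ and lies in the commutator subgroup, hence survives to $\pi_1(\Sigma_n(K))$) has non-central image, since in a non-abelian $SL(2,\mathbb{R})$-representation of a two-bridge knot group the peripheral subgroup is non-central whenever $\rho$ is non-abelian and $\rho(Z)\neq\pm I$. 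The lifting obstruction lives in $H^2(\Sigma_n(K);\mathbb{Z})$, which is finite since $\Sigma_n(K)$ is a rational homology sphere, so after passing to a finite cover or observing that the relevant obstruction class is torsion while the representation variety component is connected, the lift exists; alternatively one cites that $\widetilde{SL(2,\mathbb{R})}$-representations of $\pi_1(X_K)$ with $\rho(Z^n)$ central descend directly, which is the formulation in \cite{Hu15}. I would present the argument by quoting Boyer–Rolfsen–Wiest \cite{BRW05} together with Bergman's left-orderability of $\widetilde{SL(2,\mathbb{R})}$ \cite{Ber91} and reducing everything to the single claim that $\bar\rho$ is a non-trivial homomorphism to a left-orderable group.
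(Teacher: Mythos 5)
First, a point of comparison: the paper does not prove Theorem \ref{HuLO} at all --- it is imported verbatim from \cite{Hu15} --- so your sketch can only be measured against Hu's argument. Your overall strategy (descend the representation to the branched cover, then invoke Boyer--Rolfsen--Wiest \cite{BRW05} together with left-orderability of $\widetilde{SL(2,\mathbb{R})}$ \cite{Ber91}) is the right one, but two of the three steps have real gaps. The smaller one: $\pi_1(X_K)/\langle\!\langle Z^n\rangle\!\rangle$ is the orbifold fundamental group of $(S^3,K,n)$, not $\pi_1(\Sigma_n(K))$; the latter is the index-$n$ subgroup which is the image of $\ker(\pi_1(X_K)\to\mathbb{Z}/n)$. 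Your prose half-corrects this, and the induced $\bar\rho:\pi_1(\Sigma_n(K))\to PSL(2,\mathbb{R})$ does exist, so this is repairable. The non-triviality argument is not: you assert the longitude has non-central image ``since in a non-abelian $SL(2,\mathbb{R})$-representation of a two-bridge knot group the peripheral subgroup is non-central,'' but the theorem concerns arbitrary prime knots, and the assertion is neither justified nor obviously true (a priori the longitude could die). The clean argument is by contradiction: if $\bar\rho$ were trivial, the $PSL(2,\mathbb{R})$-representation of $\pi_1(X_K)$ would factor through $\mathbb{Z}/n$ and so have cyclic image; since no $g,h\in SL(2,\mathbb{R})$ satisfy $[g,h]=-I$ (this would force $\mathrm{tr}(h)=0$ and $h$ conjugate in $SL(2,\mathbb{R})$ to $-h=h^{-1}$, impossible for a nontrivial elliptic), an abelian image mod $\pm I$ forces an abelian image in $SL(2,\mathbb{R})$, contradicting that $\rho$ is non-abelian. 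You also never use primeness of $K$, which is what guarantees $\Sigma_n(K)$ is irreducible so that the Boyer--Rolfsen--Wiest criterion applies.

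The more serious gap is the lift to $\widetilde{PSL}(2,\mathbb{R})$. Observing that the Euler class of $\bar\rho$ lies in the finite group $H^2(\Sigma_n(K);\mathbb{Z})$ does not make it vanish --- a torsion class need not be zero --- and passing to a finite cover does not help, since left-orderability does not descend from finite-index subgroups to the ambient group. This vanishing is precisely the technical content of Hu's theorem: one lifts the representation over $\pi_1(X_K)$, where $H^2(X_K;\mathbb{Z})=0$ kills the obstruction outright, and then must control the resulting central element $\widetilde\rho(Z^n)$ in the center of $\widetilde{PSL}(2,\mathbb{R})$ (equivalently, show by a naturality/transfer argument that the Euler class of $\bar\rho$ itself vanishes). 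Your sketch defers exactly this point to ``the formulation in \cite{Hu15},'' which is circular when the goal is to prove Hu's theorem. As written, the proposal identifies the correct architecture but leaves unproved both of the claims on which the conclusion actually rests.
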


Let $K$ be a two-bridge knot for the remainder of the section.  Then the knot group has a presentation of the form
\begin{equation}\label{generalpres}
\pi_1(X_K)=\langle a,b:va=bv\rangle
\end{equation}

\noindent where $a$ and $b$ are meridians and $v$ is a word in $a$ and $b$, see eg. \cite{Ka96}.

A non-abelian representation  $\rho:\pi_1(X_K)\to SL(2,\mathbb{C})$ can be conjugated to be of the form:

\begin{align}\label{nonabrep}
\rho(a)=A=\begin{bmatrix}
s&1\\
0&s^{-1}
\end{bmatrix} & &\rho(b)=B=\begin{bmatrix}
s&0\\
2-y & s^{-1}
\end{bmatrix}
\end{align}

where $s\in \mathbb{C}\setminus \{0\}$ and $y\in\mathbb{C}$ satisfying $VA-BV=0$ where $V=\rho(v)$. A special case of (\ref{nonabrep}) is

\begin{align}\label{goodrep}
\rho(a)=A=\begin{bmatrix}
e^{i\pi/n}&1\\
0&e^{-i\pi/n}
\end{bmatrix} & &\rho(b)=B=\begin{bmatrix}
e^{i\pi/n}&0\\
2-y & e^{-i\pi/n}
\end{bmatrix}
\end{align}

\noindent for fixed $n\in\mathbb{N}$ with $n\geq 2$. This map is closer to satisfying the conditions of Theorem \ref{HuLO} since it can be shown that $\rho(a^n)=-I$.

The map defined by (\ref{nonabrep}) can be defined for any presentation of the form defined in (\ref{generalpres}), though it is not necessarily a homomorphism. To check that the map is in fact a representation for a given $(s,y)\in\mathbb{C}^*\times\mathbb{C}$, we need to see that $VA-BV=0$ is satisfied. In Section \ref{Riley} we compute the entries of $R=VA-BV$ explicitly in the case $K=J(2k+1,2m)$ as in \cite{Tr15}. 

For two-bridge knots, work of Riley shows that determining when the map (\ref{nonabrep}) is a representation reduces to determining when exactly one entry of the matrix $R$ is zero. 

\begin{proposition}[Theorem 1 in \cite{Ril84}]\label{onepoly}$R_{i,j}=0$ for $1\leq i,j\leq 2$ if and only if $R_{1,2}=0$. In other words, if $R_{1,2}=0$ then the map in (\ref{nonabrep}) is a homomorphism.
\end{proposition}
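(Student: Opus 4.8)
The plan is to work directly with the matrix $R = VA - BV$, thinking of $A$, $B$, and $V$ as elements of $SL_2$ over the field $\mathbb{C}(s,y)$ (or simply as concrete $2\times 2$ matrices depending on the parameters $s,y$), and to extract relations among the four entries $R_{1,1}, R_{1,2}, R_{2,1}, R_{2,2}$ that force three of them to vanish as soon as $R_{1,2}$ does. First I would record the two structural facts that drive everything: since $\det A = \det B = 1$, the trace of $A^{-1}B$ equals the trace of any conjugate, and $V = \rho(v)$ is a word in $A$ and $B$, so $V \in SL_2$ as well. The key classical observation of Riley is that the relator in the two-bridge presentation forces $V$ to commute with a specific element (namely $BA$ or $AB^{-1}$, depending on conventions), and one exploits the special shape of $A$ and $B$: $A$ is upper triangular and $B$ is lower triangular with the same diagonal $(s, s^{-1})$.

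The main computation I would carry out is to write down, for a word $V = \begin{bmatrix} v_{11} & v_{12} \\ v_{21} & v_{22} \end{bmatrix}$, the four entries of $R = VA - BV$ explicitly in terms of $v_{ij}, s, y$. One finds $R_{1,1} = v_{12} - (s - s^{-1})\,v_{11}\cdot(\text{something}) $ type expressions — more precisely, because $A - $ (left diagonal part) is the elementary matrix $E_{12}$ and $B - $ (its diagonal part) is $(2-y)E_{21}$, the entries of $R$ become $R_{1,1} = v_{12} + (y-2)\,?$, etc. The point is that each $R_{ij}$ is an \emph{affine-linear} function of the entries of $V$, and the relation $\det V = 1$ together with the explicit diagonal entries of $A, B$ produces a syzygy: one shows that $s\,R_{1,1} = (\text{entry expression})\cdot R_{1,2}$ and similarly that $R_{2,1}$ and $R_{2,2}$ are each $\mathbb{C}(s,y)$-multiples of $R_{1,2}$, or more robustly that the ideal generated by $R_{1,1}, R_{2,1}, R_{2,2}$ in the coordinate ring is contained in the ideal generated by $R_{1,2}$. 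Equivalently, one checks by a direct trace/determinant argument that the rank-$\le 1$ condition ``$R$ has a zero in position $(1,2)$'' already pins down all of $R$, using that $VA$ and $BV$ have equal determinant and that their difference $R$ satisfies a hidden identity coming from $A$ and $B$ sharing eigenvalues $s^{\pm1}$.

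Concretely, the cleanest route is: (i) note $BV A^{-1} - V = R A^{-1}$, so $V$ is a fixed point of the affine map $W \mapsto B W A^{-1}$ up to the error $RA^{-1}$; (ii) the linear part $W \mapsto BWA^{-1}$ is conjugation by $B$ followed by $A^{-1}$ on the right, and since $A$, $B$ have the same spectrum, its eigenvalues on the $4$-dimensional space of $2\times 2$ matrices are $1, 1, s^2, s^{-2}$; (iii) the two-dimensional fixed space (the $\lambda=1$ eigenspace) is spanned by $\mathrm{Id}$ and one other explicit matrix, and $R$, being in the image of $(\text{linear part} - \mathrm{Id})$ applied to $V$... — this is where one must be careful, because $R$ need not literally lie in that image. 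The honest version of step (iii) is Riley's: directly verify the polynomial identities $R_{2,2} = -R_{1,1}$ (forced by $\trace(VA) = \trace(BV)$, i.e. $\trace R = 0$, since $\trace(VA)=\trace(AV)=\trace(VA)$ trivially but $\trace(BV)=\trace(VB)$ and $\trace(VA)-\trace(VB) = \trace(V(A-B))$, and $A - B$ has trace $0$ only if we also use its off-diagonal structure) — and then show $s^{-1}R_{1,1} = v_{22}R_{1,2} \cdot(\cdots)$ and $R_{2,1} = (\text{linear in } v_{ij})\,R_{1,2}$ by substituting $v_{11}v_{22} - v_{12}v_{21} = 1$. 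I expect the main obstacle to be bookkeeping: producing the exact multipliers expressing $R_{1,1}, R_{2,1}, R_{2,2}$ as multiples of $R_{1,2}$ requires patiently expanding $VA - BV$ and repeatedly invoking $\det V = 1$, and one must make sure no spurious factor of $s$ or $(2-y)$ is inadvertently divided out — i.e. the implication must hold as a polynomial identity, not merely generically. Once those explicit syzygies are in hand, the statement ``$R_{1,2} = 0 \implies R_{i,j} = 0$ for all $i,j$'' is immediate, and conversely the trivial direction ``all entries zero $\implies R_{1,2}=0$'' needs nothing.
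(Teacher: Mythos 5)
Your starting move is the right one --- expanding $R=VA-BV$ entrywise --- and it is worth recording what it actually gives. Writing $V=\begin{bmatrix}v_{11}&v_{12}\\ v_{21}&v_{22}\end{bmatrix}$, one finds
$R_{1,1}=0$ identically, $R_{1,2}=v_{11}+(s^{-1}-s)v_{12}$, $R_{2,1}=(s-s^{-1})v_{21}-(2-y)v_{11}$, and $R_{2,2}=v_{21}-(2-y)v_{12}$, together with the linear identity $R_{2,1}=-(2-y)R_{1,2}+(s-s^{-1})R_{2,2}$. So the entire content of the proposition is the single claim $R_{2,2}=0$, i.e.\ $v_{21}=(2-y)v_{12}$. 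Here is where your proposal has a genuine gap: you try to extract this from $\det V=1$ and from $A,B$ sharing the spectrum $\{s,s^{-1}\}$, but no such syzygy exists. The matrix $V$ with $v_{11}=s-s^{-1}$, $v_{12}=1$, $v_{21}=0$, $v_{22}=(s-s^{-1})^{-1}$ lies in $SL_2$, satisfies $R_{1,2}=0$, yet has $R_{2,2}=y-2\neq 0$; so $R_{2,2}$ does not lie in the ideal generated by $R_{1,2}$ over general $V\in SL_2(\mathbb{C})$. Relatedly, your claim that $\trace R=0$ is false in general: $\trace R=\trace\bigl(V(A-B)\bigr)=v_{21}+(y-2)v_{12}$, which is exactly $R_{2,2}$ and is not forced to vanish by any trace formality (try $V=A$).

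The missing ingredient is the one piece of input you never use: $v$ is not an arbitrary word but the two-bridge word $a^{\epsilon_1}b^{\epsilon_2}\cdots a^{\epsilon_{p-2}}b^{\epsilon_{p-1}}$ with the palindromic symmetry $\epsilon_i=\epsilon_{p-i}$. Riley's argument runs through this symmetry: setting $D=\mathrm{diag}(1,2-y)$ one checks $B=DA^{T}D^{-1}$ and $A=DB^{T}D^{-1}$, and since $v$ equals its own reversal with $a$ and $b$ interchanged, transposing the product gives $V^{T}=D^{-1}VD$. Comparing $(2,1)$ entries yields $v_{21}=(2-y)v_{12}$ as a polynomial identity, hence $R_{2,2}=0$ identically; combined with $R_{1,1}=0$ and the displayed relation $R_{2,1}=-(2-y)R_{1,2}$, the proposition follows. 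Without invoking this symmetry of the exponent sequence (or some equivalent structural fact about two-bridge relators), the implication you want simply is not a consequence of the linear algebra of $A$, $B$, and $\det V=1$, so the proof as proposed cannot be completed.
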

\noindent We see that $R_{1,2}=R_{1,2}(s,y)$ can be considered as a polynomial in $\mathbb{Z}[s^{\pm 1},y]$.

\begin{proposition}[Proposition 1 in \cite{Ril84}] We have that $R_{1,2}(s,y)=R_{1,2}(s^{-1},y)$. Thus $R_{1,2}(s,y)=f(s+s^{-1},y)$ where $f$ is a two-variable polynomial with coefficients in $\mathbb{Z}$.
\end{proposition}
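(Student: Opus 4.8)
The plan is to prove the polynomial identity $R_{1,2}(s,y)=R_{1,2}(s^{-1},y)$ in $\mathbb{Z}[s^{\pm1},y]$ and then extract the statement about $f$ by a routine argument about symmetric Laurent polynomials. The starting point is the \emph{palindromic} structure of the relator word: for a two-bridge knot the word $v$ in $\pi_1(X_K)=\langle a,b\mid va=bv\rangle$ may be chosen so that reversing $v$ gives exactly the word $\bar v$ obtained from $v$ by the substitution $a\leftrightarrow b$, i.e. $v^{\mathrm{rev}}=\bar v$ in $F(a,b)$. Writing $\rho_s$ for the map of (\ref{nonabrep}) with parameter $s$ and $A(t),B(t)$ for the matrices of (\ref{nonabrep}) evaluated at $s=t$, set $P=\left(\begin{smallmatrix}0&1\\ 2-y&0\end{smallmatrix}\right)$. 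A one-line matrix check gives $P^{-1}A(s^{-1})P=B(s)$ and $P^{-1}B(s^{-1})P=A(s)$, so $\rho_{s^{-1}}(g)=P\,\rho_s(\bar g)\,P^{-1}$ for every $g\in F(a,b)$; in particular $\rho_{s^{-1}}(v)=P\,\rho_s(v^{\mathrm{rev}})\,P^{-1}$.

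Next I would record a normal form for the Riley matrix. Expanding $R=VA-BV$ with $V=\rho_s(v)$ gives directly $R_{1,1}=0$, $R_{1,2}=V_{1,1}-(s-s^{-1})V_{1,2}$, $R_{2,1}=(s-s^{-1})V_{2,1}-(2-y)V_{1,1}$, and $R_{2,2}=V_{2,1}-(2-y)V_{1,2}$. The anti-automorphism $\Psi(M)=\mathrm{diag}\!\big(\tfrac1{2-y},1\big)M^{T}\mathrm{diag}(2-y,1)$ satisfies $\Psi(A(s))=B(s)$, $\Psi(B(s))=A(s)$, and since $\bar v^{\,\mathrm{rev}}=v$ one gets $\Psi(\rho_s(v))=\rho_s(v)$; reading off the $(1,2)$-entry of this identity yields $V_{2,1}=(2-y)V_{1,2}$, hence $R_{2,2}=0$ and $R_{2,1}=(y-2)R_{1,2}$. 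Thus $R(s,y)=R_{1,2}(s,y)\left(\begin{smallmatrix}0&1\\ y-2&0\end{smallmatrix}\right)$ identically, and likewise at the parameter $s^{-1}$. Because $\bar v$ is palindromic for the same reason, $\Psi(\rho_s(\bar v))=\rho_s(\bar v)$ as well, so the matrix $\widetilde R(s,y):=A(s)\rho_s(\bar v)-\rho_s(\bar v)B(s)$ also has the form $\widetilde R_{1,2}(s,y)\left(\begin{smallmatrix}0&1\\ y-2&0\end{smallmatrix}\right)$.

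Now conjugate: $R(s^{-1},y)=\rho_{s^{-1}}(v)A(s^{-1})-B(s^{-1})\rho_{s^{-1}}(v)$ becomes, after conjugation by $P$ and substituting $A(s^{-1})=PB(s)P^{-1}$, $B(s^{-1})=PA(s)P^{-1}$ and $\rho_{s^{-1}}(v)=P\rho_s(\bar v)P^{-1}$, the matrix $\rho_s(\bar v)B(s)-A(s)\rho_s(\bar v)=-\widetilde R(s,y)$. Since $P^{-1}\left(\begin{smallmatrix}0&1\\ y-2&0\end{smallmatrix}\right)P=-\left(\begin{smallmatrix}0&1\\ y-2&0\end{smallmatrix}\right)$, comparing $(1,2)$-entries gives $R_{1,2}(s^{-1},y)=\widetilde R_{1,2}(s,y)$; and $\widetilde R_{1,2}=R_{1,2}$ because $\widetilde R$ is the Riley matrix of the presentation $\langle a,b\mid a\bar v=\bar v b\rangle$, which is taken to $\langle a,b\mid va=bv\rangle$ by the automorphism exchanging $a$ and $b$, so both compute the same knot polynomial. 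This yields $R_{1,2}(s,y)=R_{1,2}(s^{-1},y)$. Finally, writing $R_{1,2}(s,y)=\sum_j c_j(y)s^j$ with $c_j\in\mathbb{Z}[y]$, the symmetry forces $c_j=c_{-j}$, so $R_{1,2}=c_0(y)+\sum_{j\ge1}c_j(y)(s^j+s^{-j})$; since $s^j+s^{-j}=(s+s^{-1})(s^{j-1}+s^{-(j-1)})-(s^{j-2}+s^{-(j-2)})$ lies in $\mathbb{Z}[s+s^{-1}]$, we obtain $R_{1,2}(s,y)=f(s+s^{-1},y)$ with $f\in\mathbb{Z}[z,y]$.

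The one genuinely delicate point is the identity $\widetilde R_{1,2}=R_{1,2}$ — that passing to the meridian-exchanged presentation returns \emph{exactly} the same polynomial, not a unit multiple $\pm s^k R_{1,2}$. The structural normal form $R=R_{1,2}\left(\begin{smallmatrix}0&1\\ y-2&0\end{smallmatrix}\right)$ is what makes this tractable, as it reduces the comparison to a single scalar entry; to close it one either invokes presentation-invariance of the Riley polynomial of a two-bridge knot, or tracks the partial products $\rho_s(x_1\cdots x_j)$ against $\rho_s(x_{j+1}\cdots x_N)$, using $x_{N+1-i}=\bar x_i$ at each stage, to see by induction on the length of $v$ that the $(1,2)$-entry of $R$ is invariant under $s\mapsto s^{-1}$ on the nose. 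The remaining steps are straightforward matrix algebra.
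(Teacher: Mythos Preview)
The paper does not give its own proof of this proposition; it is simply quoted from Riley \cite{Ril84}. So the only question is whether your argument stands on its own. Your machinery is correct up to the point you yourself flag as delicate: the conjugation $P^{-1}A(s^{-1})P=B(s)$ and $P^{-1}B(s^{-1})P=A(s)$ checks out, the anti-involution $\Psi$ does force $V_{2,1}=(2-y)V_{1,2}$ and hence the normal form $R=R_{1,2}\,Q$ with $Q=\left(\begin{smallmatrix}0&1\\ y-2&0\end{smallmatrix}\right)$, and the conjugation step does give $R_{1,2}(s^{-1},y)=\widetilde R_{1,2}(s,y)$.

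The genuine gap is that the final identity $\widetilde R_{1,2}=R_{1,2}$ is circular. You have already shown $R_{1,2}(s^{-1},y)=\widetilde R_{1,2}(s,y)$, so asserting $\widetilde R_{1,2}(s,y)=R_{1,2}(s,y)$ is exactly the statement being proved. Your appeal to ``presentation-invariance of the Riley polynomial'' is not available here: the paper says explicitly that $\phi_K(x,y)$ \emph{depends on the presentation}, so swapping the meridians can in principle change it; and the induction you allude to is not set up in a way that preserves the palindromic hypothesis at each stage. A clean repair avoids the $a\leftrightarrow b$ swap altogether: take
\[
E \;=\; I+\frac{s-s^{-1}}{\,y-2\,}\,Q.
\]
A direct check shows $EA(s)E^{-1}=A(s^{-1})$ and $EB(s)E^{-1}=B(s^{-1})$, so $E$ conjugates $\rho_s$ to $\rho_{s^{-1}}$ without exchanging the generators; hence $R(s^{-1},y)=E\,R(s,y)\,E^{-1}$. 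Since $E$ is a polynomial in $Q$ it commutes with $Q$, and your normal form $R=R_{1,2}\,Q$ then gives $R_{1,2}(s^{-1},y)=R_{1,2}(s,y)$ immediately. Your last paragraph on symmetric Laurent polynomials then finishes the argument correctly.
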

\begin{definition} Let $K$ be a two-bridge knot, and fix a presentation for $\pi_1(X_K)$. Let $x=s+s^{-1}$. Then we will call $\phi_K(x,y):=f(s+s^{-1},y)$ a Riley polynomial of $K$. 
\end{definition}

We note that the polynomial $\phi_K(x,y)$ is not an invariant of $K$, but depends on the choice of presentation for $\pi_1(X_K)$. The following statement should be compared to Hu's Proposition 4.1 and the proof of Theorem 4.3 \cite{Hu15}.

\begin{theorem}\label{itsallaboutRiley}
Let $K$ be a two-bridge knot, and let $\phi_K(x,y)$ be a Riley polynomial of $K$. Fix $n\geq 2$. Suppose there exists $y_n>2$ a real solution of $\phi_K(2\cos(\pi/n),y)$. Then $\Sigma_n(K)$ is left-orderable.
\end{theorem}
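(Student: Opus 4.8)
The plan is to turn the real root $y_n>2$ into a genuine non-abelian representation of $\pi_1(X_K)$ into $SL_2(\mathbb{R})$ which sends the $n$-th power of a meridian to $\pm I$, and then to quote Theorem~\ref{HuLO}. Putting $s=e^{i\pi/n}$ gives $x=s+s^{-1}=2\cos(\pi/n)$, so the hypothesis says precisely that $R_{1,2}(e^{i\pi/n},y_n)=f(2\cos(\pi/n),y_n)=0$. By Proposition~\ref{onepoly} the map $\rho$ defined by (\ref{goodrep}) with $y=y_n$ is therefore an honest homomorphism $\rho\colon\pi_1(X_K)\to SL_2(\mathbb{C})$, and, as noted just after (\ref{goodrep}), it satisfies $\rho(a^n)=-I$ where $a$ is a meridian.

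Next I would check that $\rho$ is irreducible, hence non-abelian. The two eigenlines of $\rho(a)$ are $[1:0]$ and $[1:s^{-1}-s]$ (note $\rho(a)$ is diagonalizable since $n\ge 2$), and a direct computation shows that $\rho(b)$ fixes the first only if $y_n=2$ and the second only if $y_n=s^2+s^{-2}=2\cos(2\pi/n)$; both are impossible because $y_n>2\ge 2\cos(2\pi/n)$, so $\rho$ has no invariant line. I would also record that $\rho$ has real character: $\operatorname{tr}\rho(a)=\operatorname{tr}\rho(b)=2\cos(\pi/n)$ and $\operatorname{tr}\rho(ab)=2\cos(2\pi/n)+2-y_n$ are real, and every $\operatorname{tr}\rho(\gamma)$ is a $\mathbb{Z}$-polynomial in those three traces by the Fricke identities.

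The crucial step --- the one that genuinely uses $y_n>2$ and not just $y_n\in\mathbb{R}$ --- is to realize $\rho$ over $\mathbb{R}$. Since $\operatorname{tr}\rho(ab^{-1})=y_n>2$, the representation $\rho$ contains a loxodromic element, so its image is unbounded and cannot be conjugated into $SU(2)$, every element of which has trace in $[-2,2]$. By the standard dichotomy for irreducible $SL_2(\mathbb{C})$-representations with real character --- each is conjugate either into $SU(2)$ or into $SL_2(\mathbb{R})$ --- there is $P\in SL_2(\mathbb{C})$ such that $\rho'=P\rho P^{-1}$ takes values in $SL_2(\mathbb{R})$. Conjugation commutes with taking powers, so $\rho'$ is still non-abelian and $\rho'(a^n)=-I$. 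Finally, $K$ is a two-bridge knot and hence prime, so Theorem~\ref{HuLO} applies with $Z=a$ and gives that $\Sigma_n(K)$ is left-orderable.

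I expect the realization-over-$\mathbb{R}$ step to be where care is needed: one must either cite the right version of the real-character dichotomy or prove it in place --- an irreducible $\rho$ with real character is conjugate to its complex conjugate representation, and by Schur's lemma this forces $\rho$ to preserve a nondegenerate form on $\mathbb{C}^2$ defined over $\mathbb{R}$; the loxodromic element $\rho(ab^{-1})$ makes this form indefinite, so the relevant isometry group is $SL_2(\mathbb{R})$ rather than $SU(2)$. Alternatively one can produce $P$ by hand, first conjugating the elliptic element $\rho(a)$ into $SO(2)$. A minor additional point is to ensure that the irreducibility of $\rho$ holds uniformly for all $n\ge 2$, so that this dichotomy is genuinely available.
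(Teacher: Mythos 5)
Your proof is correct and follows essentially the same route as the paper's: obtain the representation (\ref{goodrep}) from the root $y_n$ via Proposition~\ref{onepoly}, note $\rho(a^n)=-I$, conjugate into $SL_2(\mathbb{R})$ using $y_n>2$, and invoke Theorem~\ref{HuLO} together with primeness of two-bridge knots. The only difference is that where the paper simply cites a result of Khoi for the conjugation into $SL_2(\mathbb{R})$, you prove that step in place — and your supporting computations (the eigenlines of $\rho(a)$, irreducibility from $y_n>2$, reality of the character, and $\trace\,\rho(ab^{-1})=y_n>2$ ruling out $SU(2)$) are all correct.
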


\begin{proof}
Since $\phi_K(2\cos(\pi/n),y_n)=0$ it is clear that $R_{1,2}(e^{\pi i/n},y_n)=0$.
Thus, setting $y=y_n$ in (\ref{goodrep}) defines a $ SL(2,\mathbb{C})$ representation of $\pi_1(X_K)$ by Proposition \ref{onepoly}. In addition, $y_n>2$ is real, so a result of Khoi tells us that (\ref{goodrep}) can be conjugated to a representation $\rho'$ into $ SL(2,\mathbb{R})$  \cite[p. 786]{Kh03}. Since $\rho(a^n)=-I$ we also have that $\rho'(a^n)=-I$. Finally, two-bridge knots are prime; we can now see that $\rho'$ satisfies the conditions of Theorem \ref{HuLO}, and we conclude that $\Sigma_n(K)$ is left orderable for that particular $n$. 
\end{proof}

\noindent The following theorem therefore implies Theorem \ref{Bigtheorem}, and will be proved in Section \ref{roots}.

\begin{samepage}
\begingroup
\def\thetheorem{\ref{Polybigtheorem}}
\begin{theorem}
Fix $n\geq 2$, and let $K=J(2k+1,2m)$. Then there is a presentation of $\pi_1(X_K)$ with Riley polynomial $\phi_K(x,y)$ such that $\phi_K(2\cos(\pi/n),y)$ has a root $y_n>2$ in the following cases:

\begin{enumerate}
\item $n\geq 3$ when $m\leq -3$
\item $n\geq 4$ when $m=-2$
\item $n\geq 5$ when $m=2$
\item $n\geq 4$ when $m=3$
\item $n\geq 3$ when $m\geq 4$.
\end{enumerate}
\end{theorem}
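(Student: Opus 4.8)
The plan is to reduce the statement to an explicit one–variable calculation. In Section \ref{Riley} a Riley polynomial $\phi_K(x,y)$ of $K=J(2k+1,2m)$ is written, following \cite{Tr15}, as a $\mathbb{Z}$–linear combination of products of Chebyshev polynomials $S_i$ evaluated at the two arguments naturally attached to the $(2k+1)$– and $2m$–twist regions; I expect the dependence on $k$ to enter only through $S_k$ and $S_{k-1}$ of a single argument $z=z(x,y)$, while the dependence on $m$ enters through a bounded number of Chebyshev terms. I would first substitute $s=e^{i\pi/n}$, i.e. $x=2\cos(\pi/n)$, and use the evaluation identities collected in Section \ref{Cheb} — chiefly $S_{n-1}(2\cos(\pi/n))=0$ and $S_j(2\cos(\pi/n))=\sin((j+1)\pi/n)/\sin(\pi/n)$ — to turn $\phi_K(2\cos(\pi/n),y)$ into a genuine one–variable polynomial $g_n(y)\in\mathbb{R}[y]$ whose coefficients are explicit functions of $k$, $m$ and $n$.

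Next I would run an intermediate value argument on the ray $y>2$, which by Theorem \ref{itsallaboutRiley} is precisely the interval producing left–orderability. This needs two sign computations: the sign of the leading coefficient of $g_n$, and the sign of $g_n$ at the endpoint $y=2$ (where $\rho(b)$ becomes diagonal and I expect $g_n(2)$ to collapse to a product of Chebyshev values that can be put in closed form). If, for the ranges of $n$ listed in (1)--(5), these two signs are opposite, then $g_n$ has a real root $y_n\in(2,\infty)$, which is the desired conclusion; combined with Theorem \ref{itsallaboutRiley} it yields Theorem \ref{Bigtheorem}. The argument would be organized into the five cases by the sign and size of $m$ — $m\le -3$, $m=-2$, $m=2$, $m=3$, and $m\ge 4$ — and the index $n=2$ is set aside (there $\Sigma_2(K)$ is a lens space, so no root can be expected).

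The main obstacle is making all of this uniform in $k\ge 1$, since the thresholds on $n$ should depend only on the genus $|m|$ and not on $k$. I would handle this by checking that at the test point $y=2$, $x=2\cos(\pi/n)$ the Chebyshev argument $z(x,y)$ attached to the $(2k+1)$–box is real with $z\ge 2$, so that $S_k(z)$ and $S_{k-1}(z)$ are nonnegative; then the sign of $g_n$ at the test point is governed by the $m$– and $n$–dependent factor alone. Verifying $z\ge 2$, and tracking how it together with the closed form for $g_n(2)$ changes sign as $n$ crosses the stated bounds, is the delicate step, and I expect it to be exactly where the asymmetry between positive and negative $m$, and the raised thresholds in the borderline cases $m=2$ and $m=3$, actually originates. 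In the cases where $y=2$ is itself a root of $g_n$, or where its sign there does not flip at $n=3$, I would instead factor out the roots forced to lie in $(-\infty,2]$ and apply the sign argument to the remaining factor, or compare $g_n(2)$ with the value of $g_n$ at a large explicit point; since only the existence of a single root $y_n>2$ is required, no statement about where roots fail to exist is needed here.
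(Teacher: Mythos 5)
There is a genuine gap at the heart of your plan: the two test points you propose, $y=2$ and $y\to\infty$, do not have opposite signs in general, so the intermediate value argument as you set it up fails for infinitely many $(m,n)$ in the stated ranges. Concretely, with the presentation of Section \ref{Riley} one has $\phi_K(x,y)=S_{m-1}(\lambda)\alpha-S_{m-2}(\lambda)$ with $\lambda(x,2)=x^2-2$ and $\alpha(x,2)=1+(4-x^2)k>0$, so
$\phi_K(x_n,2)=S_{m-1}(x_n^2-2)\,\alpha(x_n,2)-S_{m-2}(x_n^2-2)$.
Since $x_n^2-2\in[-1,2)$ for $n\geq 3$ lies in the oscillatory range of the Chebyshev polynomials, the sign of this quantity cycles with $m$. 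For example at $n=3$ ($x_3^2-2=-1$) the values $S_j(-1)$ cycle through $1,-1,0$ with period $6$, and for $m=7$ one gets $\phi_K(1,2)=1+3k>0$ while Lemma \ref{leading} gives $\phi_K(1,y)\to+\infty$; both signs agree and no root is detected. Your fallback ("factor out roots forced to lie in $(-\infty,2]$" or "compare with a large explicit point") does not address this, because the problem is not a root at the endpoint but that the function genuinely need not change sign between $y=2$ and $y=\infty$ along the obvious comparison. A smaller point: the identity $S_{n-1}(2\cos(\pi/n))=0$ plays no role here, since the Chebyshev polynomials in $\phi_K$ are evaluated at $y$ and at $\lambda(x,y)$, never at $x$.

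The missing idea, which is the actual content of the paper's proof, is to choose an \emph{interior} test point rather than $y=2$: one picks $c=2\cos\bigl(\tfrac{(m-2)\pi}{m-1}\bigr)$, a root of $S_{m-2}$ lying in the interval between the two smallest roots of $S_{m-1}$, and uses the fact that $\lambda(x_n,\cdot)$ decreases from $x_n^2-2$ at $y=2$ to $-\infty$ to find $y_c\geq 2$ with $\lambda(x_n,y_c)=c$, provided $c\leq x_n^2-2$. At $y_c$ the term $S_{m-2}(\lambda)$ vanishes, so $\phi_K(x_n,y_c)=S_{m-1}(c)\,\alpha(x_n,y_c)$, whose sign is determined by Lemma \ref{Sbehav} together with $\alpha>1$ on $[2,\infty)$ (your uniformity-in-$k$ instinct is correct and is exactly Lemma \ref{alpha}); this sign is opposite to the sign at infinity, and the reachability condition $c\leq x_n^2-2$ is precisely what produces the thresholds $n\geq 3$ versus $n\geq 4$ in cases (1), (2), (4), (5). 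The negative-$m$ cases use the analogous root of $S_{|m|-1}$. Only the case $m=2$, where $S_{m-2}=S_0=1$ is constant and no oscillation occurs, is handled by evaluating at $y=2$ as you propose (Proposition \ref{m=2}). Without the interior test point your argument establishes only sporadic cases and cannot be repaired by the devices you list.
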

\addtocounter{theorem}{-1}
\endgroup
\end{samepage}

\section{Chebyshev Polynomials}\label{Cheb}

Let $S_n(z)$ be the sequence of Chebyshev polynomials defined by the recurrence relation $S_{n+1}(z)=zS_{n}(z)-S_{n-1}(z)$ with $S_0(z)=1$ and $S_1(z)=z$. They allow simplifications of certain recurrences. For a well-chosen presentation of $\pi_1(X_K)$ for $K$ a double-twist knot, the Riley polynomial $\phi_K(x,y)$ can expressed in terms of these polynomials, and their properties allow us to understand the roots of $\phi_K(x,y)$.

\begin{lemma}[Lemma 3.2 in \cite{TrPretz}]\label{bettersequence} If $a_n$ is a sequence of complex numbers satisfying $a_{n+1}=ca_n-a_{n-1}$ for some $c\in\mathbb{C}$, then $a_{n+1}=S_{n}(c)a_1-S_{n-1}(c)a_0$.
\end{lemma}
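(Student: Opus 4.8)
The plan is to prove this by induction on $n$, or equivalently by invoking the uniqueness of solutions to a second-order linear recurrence once two initial values are fixed; the latter framing is the cleanest and is what I would write up. The key observation is that \emph{both} sides of the claimed identity, viewed as sequences in $n$, satisfy the same three-term recurrence, so it suffices to check that they agree at two consecutive indices.

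Concretely, I would introduce the right-hand sequence $b_{n+1} := S_n(c)a_1 - S_{n-1}(c)a_0$ and argue that it obeys $b_{n+1} = cb_n - b_{n-1}$. This follows because $S_n(c)$ and $S_{n-1}(c)$ each satisfy the Chebyshev recurrence $S_{m+1}(c) = cS_m(c) - S_{m-1}(c)$ in the index $m$, and the coefficients $a_1$ and $-a_0$ are constants independent of $n$; hence the fixed linear combination $b_{n+1}$ inherits the identical three-term recurrence. By hypothesis the left-hand sequence $(a_n)$ satisfies $a_{n+1} = ca_n - a_{n-1}$ as well, so $(a_n)$ and $(b_n)$ are two solutions of one and the same homogeneous linear recurrence.

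It then remains to match initial data. Using $S_0(c)=1$ and $S_1(c)=c$, I compute $b_2 = S_1(c)a_1 - S_0(c)a_0 = ca_1 - a_0$, which equals $a_2$ by the recurrence applied once; and $b_1 = S_0(c)a_1 - S_{-1}(c)a_0 = a_1$, where the convention $S_{-1}(c)=0$ is forced by reading the defining recurrence as $S_1 = cS_0 - S_{-1}$. Since $(a_n)$ and $(b_n)$ satisfy the same second-order recurrence and agree at two consecutive indices, they coincide for all $n$, which is exactly the assertion $a_{n+1} = S_n(c)a_1 - S_{n-1}(c)a_0$.

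There is essentially no genuine obstacle here: this is a routine verification, and the argument is purely formal. The only point requiring a word of care is the index bookkeeping at the low end. If one wishes to avoid the convention $S_{-1}(c)=0$ altogether, I would simply start the induction at the base case $a_2 = ca_1 - a_0 = S_1(c)a_1 - S_0(c)a_0$, treating $n=0$ (the trivial identity $a_1 = a_1$) separately, and then run the inductive step $a_{n+2} = ca_{n+1} - a_n = \bigl(cS_n(c) - S_{n-1}(c)\bigr)a_1 - \bigl(cS_{n-1}(c) - S_{n-2}(c)\bigr)a_0 = S_{n+1}(c)a_1 - S_n(c)a_0$ using only nonnegative indices.
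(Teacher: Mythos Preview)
Your proof is correct and is the standard induction/uniqueness argument for this identity. The paper does not give its own proof of this lemma; it simply cites it as Lemma~3.2 of \cite{TrPretz}, so there is nothing to compare against beyond noting that your argument is the expected one.
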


\begin{remark}
Calling them Chebyshev polynomials is apt since  $S_n(2z)=U_n(z)$ where $U_n(z)$ are the Chebyshev polynomials of the second kind defined by $U_0(z)=1$, $U_1(z)=2z$ and $U_n(z)=2zU_{n-1}(z)-U_{n-2}(z)$.
\end{remark}

We will make use of properties of these Chebyshev polynomials in many arguments. One can allow $n$ to be negative and extend the recurrence; we do not need this generalization, so we will assume that $n\geq 0$ for the remainder of the section.

\begin{lemma}\label{Srel} The Chebyshev polynomials $S_n(z)$ satisfy the following:
\begin{enumerate}
\item{$S_n(2)=n+1$ and $S_n(-2)=(-1)^{n}(n+1)$}
\item{The roots of $S_n(z)$ are $2\cos\left(\frac{k\pi}{n+1}\right)$ for $k=1,2,\ldots,n$.}
\item{ $S_{n}(t)> 0$ when $t\geq 2$ for $t\in\mathbb{R}$.}
\item{The inequality $S_{n+1}(t)> S_{n}(t)$ holds when $t\geq 2$ for $t\in\mathbb{R}$.}
\end{enumerate}
\end{lemma}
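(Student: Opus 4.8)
The statement to prove is Lemma~\ref{Srel}, listing four elementary properties of the Chebyshev polynomials $S_n(z)$. The plan is to establish all four by induction on $n$, leaning on the defining recurrence $S_{n+1}(z)=zS_n(z)-S_{n-1}(z)$ with $S_0=1$, $S_1(z)=z$.

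\textbf{Part (1).} First I would prove $S_n(2)=n+1$ by induction: the base cases $S_0(2)=1$ and $S_1(2)=2$ are immediate, and if $S_{n}(2)=n+1$ and $S_{n-1}(2)=n$ then $S_{n+1}(2)=2(n+1)-n=n+2$. Identically, $S_n(-2)=(-1)^n(n+1)$: base cases $S_0(-2)=1$, $S_1(-2)=-2$, and the inductive step $S_{n+1}(-2)=-2(-1)^n(n+1)-(-1)^{n-1}n=(-1)^{n+1}(2n+2-n)=(-1)^{n+1}(n+2)$.

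\textbf{Part (2).} For the roots, I would use the standard substitution $z=2\cos\theta$. One checks by induction that $S_n(2\cos\theta)=\dfrac{\sin((n+1)\theta)}{\sin\theta}$ for $\theta\notin\pi\mathbb{Z}$: the base cases are $\sin\theta/\sin\theta=1$ and $\sin(2\theta)/\sin\theta=2\cos\theta$, and the recurrence for $S_n$ matches the identity $\sin((n+2)\theta)=2\cos\theta\sin((n+1)\theta)-\sin(n\theta)$. Since $S_n$ has degree $n$, its $n$ roots are exactly the values $2\cos\theta$ with $\sin((n+1)\theta)=0$ but $\sin\theta\neq 0$, i.e. $\theta=\frac{k\pi}{n+1}$ for $k=1,\dots,n$, giving the $n$ distinct roots $2\cos\!\left(\frac{k\pi}{n+1}\right)$.

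\textbf{Parts (3) and (4).} These I would prove together by a single induction on $n$, since (4) feeds (3) and vice versa. Fix $t\geq 2$ real. The claim is: $S_n(t)>0$ and $S_{n+1}(t)>S_n(t)$ for all $n\geq 0$. Base case $n=0$: $S_0(t)=1>0$ and $S_1(t)=t\geq 2>1=S_0(t)$. Inductive step: assume $S_{n-1}(t)>0$, $S_n(t)>0$, and $S_n(t)>S_{n-1}(t)$. Then $S_{n+1}(t)=tS_n(t)-S_{n-1}(t)\geq 2S_n(t)-S_{n-1}(t)=S_n(t)+(S_n(t)-S_{n-1}(t))>S_n(t)>0$, which simultaneously gives positivity of $S_{n+1}(t)$ and the inequality $S_{n+1}(t)>S_n(t)$, completing the induction. (Strictly, one should note for $t=2$ this still works since $S_n(2)=n+1$ is explicitly increasing and positive, consistent with part (1); for $t>2$ the argument above is clean.)

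\textbf{Main obstacle.} There is no real obstacle here — every part is a short induction or a trigonometric substitution. The only point requiring mild care is getting parts (3) and (4) in the right logical order so the induction closes; bundling them into one combined inductive hypothesis, as above, handles that cleanly. One might alternatively cite a standard reference for the trig identity in part (2), but the self-contained inductive proof is just as quick.
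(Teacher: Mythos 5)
Your proof is correct, and parts (1) and (4) match the paper essentially verbatim (the same explicit induction, and the same chain $S_{n+1}(t)=tS_n(t)-S_{n-1}(t)\geq 2S_n(t)-S_{n-1}(t)=S_n(t)+(S_n(t)-S_{n-1}(t))>S_n(t)$). The differences are in (2) and (3). For (2), the paper simply quotes the known roots $\cos\bigl(\frac{k\pi}{n+1}\bigr)$ of the Chebyshev polynomials $U_n$ of the second kind and transports them via $S_n(2z)=U_n(z)$, whereas you rederive everything from the identity $S_n(2\cos\theta)=\sin((n+1)\theta)/\sin\theta$; this is the same underlying fact, but your version is self-contained. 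For (3), the routes genuinely diverge: the paper deduces positivity on $[2,\infty)$ from (2) (all $n$ roots lie in $(-2,2)$ and the leading coefficient is positive, so $S_n$ is positive to the right of its largest root), while you bundle (3) and (4) into one simultaneous induction, so that positivity comes for free from strict monotonicity starting at $S_0(t)=1$. Your version buys independence from the root computation (useful if one wanted (3)--(4) without trigonometry); the paper's version buys the slightly stronger observation that $S_n(t)>0$ already for $t\geq 2\cos\bigl(\frac{\pi}{n+1}\bigr)$, though the lemma does not need it. Both arguments are complete; your combined inductive hypothesis closes correctly, and the parenthetical worry about $t=2$ is unnecessary since the step only uses $t\geq 2$ and $S_n(t)>0$.
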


\begin{proof}\
\begin{enumerate}

\item{This follows easily by induction.
}

\item{Using the fact that the roots of $U_n(x)$ are $\cos\left(\frac{k\pi}{n+1}\right)$ for $k=1,2,\ldots,n$, the result follows from the fact that $S_n(2z)=U_n(z)$.}

\item{It is clear from the definition of Chebyshev polynomials that the leading coefficient is positive so that the end behavior as $t$ tends to infinity is positive.  By (2) we have that all of the roots lie in the interval $(-2,2)$. Thus, $S_N(t)$ is positive on $[2,\infty)$. }

\item{We proceed by induction. For $n=0$ or $1$ the statement is clear. Now suppose that the statement holds for all $0\leq n<N$ and let $N>1$.

Let $t\geq 2$. We have by the induction hypothesis that $S_{N}(t)> S_{N-1}(t)$. Hence,
\begin{align*}
S_{N+1}(t)&=tS_{N}(t)-S_{N-1}(t)\geq 2S_{N}(t)-S_{N-1}(t)\\
&= S_{N}(t)+(S_{N}(t)-S_{N-1}(t))> S_{N}(t).\qedhere
\end{align*} }

\end{enumerate}
\end{proof}

\begin{lemma}\label{Sbehav} Let $n\geq 1$. Then $(-1)^nS_n(t)<0$ on the interval $\left( 2\cos\left(\frac{m\pi}{m+1}\right), 2\cos\left(\frac{(m-1)\pi}{m+1}\right)\right)$.
\end{lemma}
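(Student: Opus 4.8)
The plan is to pin down the sign of $S_n$ on the interval in the statement by combining the location of the roots of $S_n$ (Lemma~\ref{Srel}(2)) with a single sign evaluation (Lemma~\ref{Srel}(1)). Write $I=\left(2\cos(n\pi/(n+1)),\,2\cos((n-1)\pi/(n+1))\right)$. By Lemma~\ref{Srel}(2) the roots of $S_n$ are the $n$ numbers $2\cos(k\pi/(n+1))$ for $k=1,\dots,n$, and since $\cos$ is strictly decreasing on $(0,\pi)$ these are $n$ \emph{distinct} reals, so each is a simple root, and the smallest one is $2\cos(n\pi/(n+1))$, the left endpoint of $I$. Moreover no root lies strictly inside $I$: for any root with $1\le k\le n-1$ we have $k\pi/(n+1)\le (n-1)\pi/(n+1)$, hence $2\cos(k\pi/(n+1))\ge 2\cos((n-1)\pi/(n+1))$, the right endpoint of $I$; and the root with $k=n$ is the left endpoint. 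So $S_n$ has constant sign on $I$, and it remains only to identify it.

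Next I would determine the sign of $S_n$ just to the left of its smallest root. Since $\cos(n\pi/(n+1))>\cos\pi=-1$ we have $-2<2\cos(n\pi/(n+1))$, and as all roots of $S_n$ lie in $(-2,2)$ with smallest root $2\cos(n\pi/(n+1))$, the polynomial $S_n$ is nonvanishing on the connected interval $\left[-2,\,2\cos(n\pi/(n+1))\right)$. By Lemma~\ref{Srel}(1), $S_n(-2)=(-1)^n(n+1)$, so $S_n$ keeps the sign $(-1)^n$ on that whole interval. Crossing the simple root $2\cos(n\pi/(n+1))$ the sign of $S_n$ flips, so immediately to the right — and therefore, since $S_n$ has no root in $I$, throughout $I$ — $S_n$ has sign $(-1)^{n+1}$. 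That is exactly the assertion $(-1)^n S_n(t)<0$ for $t\in I$.

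As an alternative route one can instead use the closed form $S_n(2\cos\theta)=\sin((n+1)\theta)/\sin\theta$ for $\theta\in(0,\pi)$ (which follows from $S_n(2\cos\theta)U_n$ being the Chebyshev polynomial of the second kind, cf.\ the Remark after Lemma~\ref{bettersequence}); the decreasing bijection $t=2\cos\theta$ carries $\left((n-1)\pi/(n+1),\,n\pi/(n+1)\right)$ onto $I$, and on that range $(n+1)\theta\in\left((n-1)\pi,\,n\pi\right)$, where $\sin((n+1)\theta)$ has constant sign $(-1)^{n-1}=(-1)^{n+1}$ while $\sin\theta>0$, giving the same conclusion. Either way there is no genuine obstacle here; the only points requiring care are that the left endpoint of $I$ really is the smallest root of $S_n$ and that all roots of $S_n$ are simple, both of which are immediate from Lemma~\ref{Srel}(2), after which the sign is fixed by the one value $S_n(-2)=(-1)^n(n+1)$.
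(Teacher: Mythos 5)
Your proof is correct and follows essentially the same route as the paper: both arguments use the root locations from Lemma \ref{Srel}(2) to see that $S_n$ has constant sign on the interval (whose left endpoint is the smallest root), and then pin down that sign by evaluating $S_n(-2)=(-1)^n(n+1)$. Your version is slightly more explicit about simplicity of the roots and the sign change across the left endpoint, but the underlying idea is identical.
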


\begin{proof}
We begin by noting that $r_1=2\cos\left(\frac{m\pi}{m+1}\right)$ is the smallest root of $S_n(t)$, and that $r_1$ and $r_2=2\cos\left(\frac{(m-1)\pi}{m+1}\right)$ are consecutive roots. Thus the sign of $S_n(t)$ on $(r_1,r_2)$ is constant and opposite of the sign on $(-\infty,r_1)$ which is also constant. Since $-2\in (-\infty, r_1)$ and  $S_n(-2)=(-1)^n(n+1)$, the lemma follows.
\end{proof}

\section{A formula for the Riley Polynomial}\label{Riley}

Let $K=J(2k+1,2m)$. We will fix a presentation for $\pi_1(X_K)$. For Sections \ref{Riley} and \ref{roots} when we write $\phi_K(x,y)$ we mean the Riley polynomial of $K$ for the following choice of presentation:
\begin{align}\label{favoritepresentation}
\pi_1(X_K)=\langle a,b \,|\, w^ma=bw^m\rangle
\end{align}

\noindent where $a$ and $b$ are meridians and  $w=(ba^{-1})^kba(b^{-1}a)^k$ \cite{HS04}.

An easy consequence of Lemma \ref{bettersequence} gives a formula for powers of matrices in $SL(2,\mathbb{C})$ in terms of Chebyshev polynomials. 

\begin{lemma}[Lemma 2.2 of \cite{MT14}]\label{matrixpower}
Let $M\in  SL(2,\mathbb{C})$. Then
 \[M^{n}=S_{n}(\trace (M))I-S_{n-1}(\trace (M))M^{-1}\!\!\!\!\!\!\!.\]
\end{lemma}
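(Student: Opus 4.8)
The final statement is Lemma \ref{matrixpower}: for $M\in SL(2,\mathbb{C})$ we want to prove the identity
\[
M^{n}=S_{n}(\trace(M))\,I-S_{n-1}(\trace(M))\,M^{-1}.
\]
The natural approach is induction on $n$, using both the Chebyshev recurrence $S_{n+1}(z)=zS_n(z)-S_{n-1}(z)$ and the Cayley--Hamilton theorem for $2\times 2$ matrices of determinant $1$. Write $t=\trace(M)$. Since $\det M=1$, Cayley--Hamilton gives $M^2-tM+I=0$, equivalently $M^2=tM-I$ and also $M^{-1}=tI-M$; these two relations are the only facts about $M$ we will need beyond the recurrence.

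\textbf{Key steps.} First I would check the base cases. For $n=0$: $S_0=1$, $S_{-1}=0$ (this is the convention forced by the recurrence, and I would note that $S_{-1}(z)=0$ follows from $S_1=zS_0-S_{-1}$ together with $S_1=z$), so the right-hand side is $I$, matching $M^0=I$. For $n=1$: $S_1(t)=t$, $S_0=1$, so the right-hand side is $tI-M^{-1}$, which equals $M$ precisely by the Cayley--Hamilton relation $M^{-1}=tI-M$. Next, assuming the identity for indices $n$ and $n-1$, I would prove it for $n+1$ by multiplying the $n$-th case by $M$:
\[
M^{n+1}=S_n(t)\,M-S_{n-1}(t)\,I.
\]
Now I would rewrite $M$ using $M=tI-M^{-1}$ to re-express this in the desired $\{I,M^{-1}\}$ basis:
\[
M^{n+1}=S_n(t)\bigl(tI-M^{-1}\bigr)-S_{n-1}(t)\,I
      =\bigl(tS_n(t)-S_{n-1}(t)\bigr)I-S_n(t)\,M^{-1}.
\]
Finally I would apply the Chebyshev recurrence to recognize the coefficient of $I$ as $S_{n+1}(t)$, yielding $M^{n+1}=S_{n+1}(t)\,I-S_n(t)\,M^{-1}$, which is exactly the claim at index $n+1$. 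This closes the induction.

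\textbf{Main obstacle.} There is no real analytic difficulty here; the identity is an algebraic consequence of Cayley--Hamilton packaged through the Chebyshev recurrence, and it could equally be quoted directly from Lemma \ref{bettersequence} by applying it to the matrix sequence $a_n=M^{n+1}$ (which satisfies $a_{n+1}=\trace(M)\,a_n-a_{n-1}$ by Cayley--Hamilton, with $a_0=M$ and $a_1=M^2$). The only point requiring a little care is bookkeeping at the low end of the index range: one must fix the convention $S_{-1}=0$ consistently so that the $n=0$ case is correct, and one must verify the Cayley--Hamilton substitution $M^{-1}=tI-M$ rather than carelessly using $M^2=tM-I$ in the wrong direction. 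Beyond that, the proof is a short two-case base plus a one-line inductive step, so I expect the verification of the $n=1$ base case (which is where Cayley--Hamilton genuinely enters) to be the most substantive ingredient.
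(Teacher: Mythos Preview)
Your proof is correct and matches the paper's approach: the paper simply remarks that the lemma is an easy consequence of Lemma~\ref{bettersequence}, and your induction using Cayley--Hamilton together with the Chebyshev recurrence is precisely that consequence written out (you even note this alternative yourself). The only cosmetic difference is that the paper would apply Lemma~\ref{bettersequence} to $a_n=M^n$ directly rather than to $a_n=M^{n+1}$, but this is just an index shift.
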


Let $\rho$ be the map in (\ref{nonabrep}), and let $A=\rho(a)$, $B=\rho(b)$ and $W=\rho(w)$. Recall that $x=s+s^{-1}=\trace(A)=\trace(B)$ and note that $\trace(BA^{-1})=\trace(B^{-1}A)=y$.

\begin{lemma}[Lemma 2.3 in \cite{MT14}]\label{almostrep}
\begin{align*}
WA-BW=\begin{bmatrix}
0& \alpha\\
(y-2)\alpha&0
\end{bmatrix}
\end{align*}

where $\alpha=\alpha_k(x,y)=1+(y+2-x^2)S_{k-1}(y)(S_k(y)-S_{k-1}(y))$.
\end{lemma}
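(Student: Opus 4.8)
\textbf{Proof proposal for Lemma \ref{almostrep}.}

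The plan is to compute $W = \rho(w)$ explicitly as a product of powers of the matrices $BA^{-1}$, $BA$, and $B^{-1}A$, then multiply out $WA - BW$ and read off the entries. First I would set $P = BA^{-1}$ and $Q = B^{-1}A$, so that $w = P^k \, ba \, Q^k$ and hence $W = P^k \, (BA) \, Q^k$. Using Lemma \ref{matrixpower} I can write $P^k = S_k(\trace P) I - S_{k-1}(\trace P) P^{-1}$ and similarly $Q^k = S_k(\trace Q) I - S_{k-1}(\trace Q) Q^{-1}$; since $\trace(BA^{-1}) = \trace(B^{-1}A) = y$ by the remark preceding the lemma, both traces equal $y$, so the Chebyshev arguments are uniform. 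This reduces $W$ to an explicit $2\times 2$ matrix whose entries are $\mathbb{Z}$-polynomials in $s^{\pm 1}$ and $y$, with the Chebyshev values $S_k(y)$, $S_{k-1}(y)$ appearing linearly.

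Next I would form $WA$ and $BW$ directly from these expressions and subtract. The key structural claim — that the diagonal entries of $WA - BW$ vanish and that the two off-diagonal entries are $\alpha$ and $(y-2)\alpha$ — should drop out once everything is expanded: the diagonal cancellation is exactly the content of Proposition \ref{onepoly} applied to the word $w$ in place of $v$ (or can be seen as a direct consequence of $\det A = \det B = 1$ and $\trace A = \trace B$), and the proportionality of the lower-left to the upper-right entry by the factor $y - 2 = (2-y)$ up to sign reflects the symmetry between $A$ and $B$ built into (\ref{nonabrep}). The substantive computation is then just identifying the upper-right entry with the stated $\alpha_k(x,y) = 1 + (y+2-x^2) S_{k-1}(y)\bigl(S_k(y) - S_{k-1}(y)\bigr)$; here the combination $y + 2 - x^2$ will emerge from the specific entries of $BA$ and the inverses $P^{-1}, Q^{-1}$, recalling $x = s + s^{-1}$ so that $x^2 = s^2 + 2 + s^{-2}$.

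I expect the main obstacle to be purely bookkeeping: carrying the four entries of $W$ — each a linear combination of $1$, $S_{k-1}(y)$, $S_k(y)$, and products thereof — through the multiplications by $A$ and $B$ without error, and then recognizing that the cross terms $S_{k-1}(y)S_{k-1}(y)$, $S_{k-1}(y)S_k(y)$ assemble into $S_{k-1}(y)(S_k(y) - S_{k-1}(y))$ with precisely the coefficient $y + 2 - x^2$. A useful sanity check along the way is the case $k = 0$: then $w = ba$, $W = BA$, $S_0 = 1$, $S_{-1} = 0$, so $\alpha_0 = 1$, and $WA - BW = BA\cdot A - B\cdot BA = (BA - B^2)A$, whose off-diagonal structure can be verified by hand to match. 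Another check is to confirm the claimed value of $\trace W$ against Lemma \ref{bettersequence}, which constrains the diagonal of $W$ and hence the diagonal cancellation in $WA - BW$. Once the formula for $\alpha_k$ is pinned down, the lemma follows.
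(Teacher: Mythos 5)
The paper does not prove this lemma --- it is quoted from \cite{MT14} --- and your direct-computation plan is essentially the argument given there, and it does work. Concretely: with $P=BA^{-1}$, $Q=B^{-1}A$ and Lemma \ref{matrixpower}, $W=P^k(BA)Q^k$ collapses to $S_k(y)^2\,BA-S_k(y)S_{k-1}(y)\,(A^2+B^2)+S_{k-1}(y)^2\,AB$, so $WA-BW$ is the corresponding combination of $BA^2-B^2A$, $A^3+B^2A-BA^2-B^3$ and $ABA-BAB$; each of these three matrices is individually of the form $\bigl(\begin{smallmatrix}0&\gamma\\(y-2)\gamma&0\end{smallmatrix}\bigr)$ with $\gamma=1$, $x^2-2$, $x^2-y-1$ respectively, which gives both the vanishing diagonal and the $(y-2)$-proportionality at once, and the $(1,2)$ entry $S_k^2-(x^2-2)S_kS_{k-1}+(x^2-y-1)S_{k-1}^2$ equals the stated $\alpha_k$.

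Two cautions. First, your justifications for the diagonal cancellation are not right as stated: Proposition \ref{onepoly} (as quoted) is an equivalence about when \emph{all} entries vanish simultaneously, not an assertion that $R_{1,1}$ and $R_{2,2}$ vanish identically; and the claim that the cancellation follows merely from $\det A=\det B=1$ and $\trace A=\trace B$ is false for general words --- e.g.\ for $w=a$ one gets $(A^2-BA)_{2,2}=y-2\neq 0$. The cancellation genuinely uses the palindromic structure of $w$, or, in your plan, should simply be read off from the explicit computation as above; rely on that. Second, the last simplification requires the Chebyshev identity $S_k(y)^2-yS_k(y)S_{k-1}(y)+S_{k-1}(y)^2=1$ (an easy induction) to convert the $S_k^2$ term into $1+yS_kS_{k-1}-S_{k-1}^2$; without naming it, the cross terms do not visibly ``assemble'' into $1+(y+2-x^2)S_{k-1}(y)\bigl(S_k(y)-S_{k-1}(y)\bigr)$. (Your $k=0$ sanity check is correct; the proposed check of $\trace W$ against Lemma \ref{bettersequence} is not meaningful, since the trace does not constrain the individual diagonal entries, but it is harmless.)
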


\begin{lemma}[Lemma 2.4 in \cite{MT14}]\label{trace}The trace of $W$ is given by 
\[\lambda=\lambda_k(x,y)=\trace(W)=x^2-y-(y-2)(y+2-x^2)S_k(y)S_{k-1}(y).\]
\end{lemma}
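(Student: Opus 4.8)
The plan is to exploit the palindromic structure of the word $w=(ba^{-1})^kba(b^{-1}a)^k$ so that raising to powers is handled by Lemma \ref{matrixpower}. Writing $P=BA^{-1}=\rho(ba^{-1})$ and $Q=B^{-1}A=\rho(b^{-1}a)$, I have $W=P^k(BA)Q^k$. From (\ref{nonabrep}) one records $\trace(P)=\trace(BA^{-1})=y$ and likewise $\trace(Q)=y$, so Lemma \ref{matrixpower} gives
\[P^k=S_k(y)I-S_{k-1}(y)P^{-1},\qquad Q^k=S_k(y)I-S_{k-1}(y)Q^{-1},\]
with $P^{-1}=AB^{-1}$ and $Q^{-1}=A^{-1}B$. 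Substituting these into $W$ and using linearity of the trace expands $\trace(W)$ into four terms, indexed by whether each outer factor contributes its $S_k(y)I$ part or its $-S_{k-1}(y)P^{-1}$ (resp. $Q^{-1}$) part.

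The key simplification is that each of the four resulting matrix products telescopes, because the stray $A^{-1}$ and $B^{-1}$ cancel against the central $BA$. Concretely, $BA\cdot Q^{-1}=BA\cdot A^{-1}B=B^2$, $P^{-1}\cdot BA=AB^{-1}\cdot BA=A^2$, and $P^{-1}\cdot BA\cdot Q^{-1}=AB^{-1}\cdot BA\cdot A^{-1}B=AB$, while the remaining term is just $BA$. Hence I only need four traces, and since $s^2+s^{-2}=(s+s^{-1})^2-2=x^2-2$, a short multiplication gives $\trace(A^2)=\trace(B^2)=x^2-2$ and $\trace(AB)=\trace(BA)=x^2-y$. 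Collecting the Chebyshev coefficients then yields
\[\trace(W)=\bigl(S_k(y)^2+S_{k-1}(y)^2\bigr)(x^2-y)-2S_k(y)S_{k-1}(y)(x^2-2).\]

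To reach the stated closed form I would use the identity $S_k(y)^2+S_{k-1}(y)^2=yS_k(y)S_{k-1}(y)+1$, verified by a one-line induction: the quantity $S_k^2+S_{k-1}^2-yS_kS_{k-1}$ is independent of $k$, since its successive difference factors as $(S_k-S_{k-2})(S_k+S_{k-2}-yS_{k-1})$ and the recurrence $S_k=yS_{k-1}-S_{k-2}$ forces the second factor to vanish; its value at $k=1$ is $y^2+1-y^2=1$. Substituting eliminates $S_k^2+S_{k-1}^2$ and leaves $(x^2-y)+S_k(y)S_{k-1}(y)\bigl[y(x^2-y)-2(x^2-2)\bigr]$, and expanding $-(y-2)(y+2-x^2)=yx^2-y^2-2x^2+4$ shows the bracketed factor matches, giving $\lambda_k(x,y)=x^2-y-(y-2)(y+2-x^2)S_k(y)S_{k-1}(y)$.

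I expect the only real subtlety to be bookkeeping rather than conceptual: recognizing that the four sandwiched products collapse to $A^2,B^2,AB,BA$ is precisely what turns a $k$-dependent computation into a fixed one, and everything after that is the small Chebyshev identity plus routine algebra. The main obstacle is therefore simply carrying out the telescoping and the final polynomial expansion without sign errors.
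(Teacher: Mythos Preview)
Your argument is correct. The telescoping of $P^{-1}(BA)Q^{-1}$, $(BA)Q^{-1}$, $P^{-1}(BA)$ and $BA$ to $AB$, $B^2$, $A^2$, $BA$ is right, the trace computations $\trace(A^2)=\trace(B^2)=x^2-2$ and $\trace(AB)=\trace(BA)=x^2-y$ are fine, the Chebyshev identity $S_k^2+S_{k-1}^2-yS_kS_{k-1}\equiv 1$ is proved cleanly by your difference argument, and the final algebra checks.

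There is nothing to compare against: the paper does not prove this lemma but simply quotes it from \cite{MT14}. Your write-up is a perfectly good self-contained proof of the cited result, and in fact the strategy (Cayley--Hamilton via Lemma~\ref{matrixpower} on the outer powers, then the quadratic Chebyshev identity) is essentially how Morifuji--Tran argue as well.
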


\noindent We prove a mild reformulation of a proposition of Morifuji-Tran \cite[Proposition 2.5]{MT14}.

\begin{prop}\label{rep?} If $m\geq 1$ then $\phi_K(x,y)= S_{m-1}(\lambda)\alpha-S_{m-2}(\lambda)$. If $m\leq -1$ then $\phi_K(x,y)=S_{|m|}(\lambda)-S_{|m|-1}(\lambda)\alpha$.
\end{prop}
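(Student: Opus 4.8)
The plan is to compute $R = VA - BV$ for the presentation (\ref{favoritepresentation}), where $v = w^m$, by reducing everything to powers of the single matrix $W = \rho(w)$ and invoking Lemma \ref{matrixpower}. The key observation is that $R_{1,2}$ (which by Proposition \ref{onepoly} is the only entry we need) can be extracted from the matrix $W^m A - B W^m$, so the whole task is to understand how the ``defect matrix'' $WA - BW$ from Lemma \ref{almostrep} propagates when $W$ is raised to the $m$-th power.

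First I would treat the case $m \geq 1$. Using Lemma \ref{matrixpower} write $W^m = S_{m-1}(\lambda) W - S_{m-2}(\lambda)(\lambda I - W)$, or more cleanly $W^m = S_{m-1}(\lambda) W - S_{m-2}(\lambda) W \cdot W^{-1}\!$; the practical identity is $W^m = S_{m-1}(\lambda)\, W - S_{m-2}(\lambda)\, I$ is \emph{not} quite right, so instead I would use $W^m = S_{m-1}(\lambda) I \cdot W^{?}$ carefully — the correct statement from Lemma \ref{matrixpower} is $W^m = S_m(\lambda) I - S_{m-1}(\lambda) W^{-1}$. Then
\[
W^m A - B W^m = S_m(\lambda)(A - B) - S_{m-1}(\lambda)\bigl(W^{-1} A - B W^{-1}\bigr).
\]
Here $A - B$ is the elementary matrix with $(2,1)$-entry $y - 2$ and all else zero, contributing nothing to the $(1,2)$-slot, and from Lemma \ref{almostrep} one reads off (after conjugating/inverting appropriately) that $W^{-1}A - BW^{-1}$ has $(1,2)$-entry essentially $-\alpha$ up to sign bookkeeping. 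A more robust route: apply the recursion directly. Set $R^{(m)} = W^m A - B W^m$ and note $R^{(m+1)} = W R^{(m)} + (W^{m+1} - W \cdot W^m)$-type cancellation is automatic, giving a linear recursion $R^{(m+1)}_{1,2} = \lambda R^{(m)}_{1,2} - R^{(m-1)}_{1,2}$ once one checks that the ``off-diagonal coupling'' in Lemma \ref{almostrep} (the $(y-2)\alpha$ entry paired with $\alpha$) is exactly what makes $W$ act on the two-dimensional space spanned by the defect matrices as a companion matrix with trace $\lambda$ and determinant $1$. Then Lemma \ref{bettersequence} collapses the recursion: with $R^{(0)}_{1,2} = (A-B)_{1,2} = 0$ and $R^{(1)}_{1,2} = \alpha$ (from Lemma \ref{almostrep}), Lemma \ref{bettersequence} gives $R^{(m)}_{1,2} = S_{m-1}(\lambda)\alpha - S_{m-2}(\lambda)\cdot 0$... which isn't yet the claimed formula, so the correct seed values must be $R^{(-1)}$ or a shifted index; I would pin these down by directly verifying $m = 1$ and $m = 2$ by hand and then letting Lemma \ref{bettersequence} do the rest, matching the asserted $\phi_K = S_{m-1}(\lambda)\alpha - S_{m-2}(\lambda)$.

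For $m \leq -1$, I would use $w^m = (w^{-1})^{|m|}$ and run the same argument with $W^{-1}$ in place of $W$; since $\trace(W^{-1}) = \trace(W) = \lambda$, the Chebyshev indices are the same, but the defect matrix for $W^{-1}A - BW^{-1}$ sits in the ``other'' slot (its nonzero $(1,2)$-entry is proportional to $1$ rather than $\alpha$, by symmetry of Lemma \ref{almostrep} under $W \leftrightarrow W^{-1}$), which swaps the roles of $\alpha$ and $1$ and produces $\phi_K = S_{|m|}(\lambda) - S_{|m|-1}(\lambda)\alpha$. The main obstacle I anticipate is purely bookkeeping: getting the exact correspondence between the abstract entries of $WA - BW$ in Lemma \ref{almostrep} and the $(1,2)$-entry of $W^m A - BW^m$ right, including all signs and the precise index shift in the Chebyshev polynomials, and confirming that the base cases $m = \pm 1, \pm 2$ match. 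Since this is billed as ``a mild reformulation'' of \cite[Proposition 2.5]{MT14}, I would also cross-check the final formula against Morifuji--Tran's version to make sure the reformulation is consistent, and use Lemma \ref{Srel}(1) ($S_n(2) = n+1$) as a sanity check at the degenerate point $x^2 = y+2$ where $\alpha = 1$ and $\lambda = x^2 - y = 2$.
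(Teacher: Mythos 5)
Your overall strategy --- reduce to the $(1,2)$-entry of $W^mA-BW^m$, expand $W^m$ via Chebyshev identities, and seed with Lemma \ref{almostrep} --- is exactly the paper's, but as written the argument does not close, and the obstruction is a single concrete computational error: from (\ref{nonabrep}), $A-B$ has $(1,2)$-entry equal to $1$ and $(2,1)$-entry equal to $y-2$, so your claims that $A-B$ ``contributes nothing to the $(1,2)$-slot'' and that $R^{(0)}_{1,2}=(A-B)_{1,2}=0$ are both false. This is precisely why your application of Lemma \ref{bettersequence} produces $S_{m-1}(\lambda)\alpha - S_{m-2}(\lambda)\cdot 0$ instead of the claimed formula. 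With the correct seeds $a_0=(A-B)_{1,2}=1$ and $a_1=(WA-BW)_{1,2}=\alpha$, your recursion $R^{(m+1)}_{1,2}=\lambda R^{(m)}_{1,2}-R^{(m-1)}_{1,2}$ (which holds for the trivial reason that $W^{m+1}=\lambda W^m-W^{m-1}$ by Cayley--Hamilton --- no ``companion matrix on the space of defect matrices'' argument is needed) together with Lemma \ref{bettersequence} gives $R^{(m)}_{1,2}=S_{m-1}(\lambda)\alpha-S_{m-2}(\lambda)$ immediately. Your other route fails for the same reason plus a second slip: $W^{-1}=\lambda I-W$, so $(W^{-1}A-BW^{-1})_{1,2}=\lambda-\alpha$, not ``essentially $-\alpha$'' and not ``proportional to $1$''; if you substitute the correct values into $S_m(\lambda)(A-B)-S_{m-1}(\lambda)(W^{-1}A-BW^{-1})$ you again recover $S_{m-1}(\lambda)\alpha-S_{m-2}(\lambda)$ after using $S_m=\lambda S_{m-1}-S_{m-2}$.

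For comparison, the paper's proof avoids the recursion entirely: combining Lemma \ref{matrixpower} with $W^{-1}=\lambda I-W$ gives the closed form $W^m=S_{m-1}(\lambda)W-S_{m-2}(\lambda)I$, whence $W^mA-BW^m=S_{m-1}(\lambda)(WA-BW)-S_{m-2}(\lambda)(A-B)$ and the $(1,2)$-entry is read off from Lemma \ref{almostrep} and the explicit matrix $A-B$. For $m\leq -1$ it applies Lemma \ref{matrixpower} directly to $W^{-1}$ (whose inverse is $W$ and whose trace is still $\lambda$), obtaining $(W^{-1})^{|m|}=S_{|m|}(\lambda)I-S_{|m|-1}(\lambda)W$ and hence $S_{|m|}(\lambda)(A-B)-S_{|m|-1}(\lambda)(WA-BW)$, which yields $S_{|m|}(\lambda)-S_{|m|-1}(\lambda)\alpha$ for the same reason. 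So the gap in your proposal is small and entirely fixable, but the step you flagged as unresolved (``the correct seed values must be\ldots'') is exactly where the proof lives, and the value you need is $1$, not $0$.
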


\begin{proof}
Let $m\geq 1$. Since Lemma \ref{trace} computes the trace of $W$, Lemma \ref{matrixpower} allows us to simplify $W^m$. Lemma \ref{almostrep} allows us to simplify further and conclude the following series of equalities.
\begin{align*}
R_{i,j}=W^mA-BW^m&=S_{m-1}(\lambda)WA-S_{m-2}(\lambda)A-S_{m-1}(\lambda)BW+S_{m-2}(\lambda)B\\
&=S_{m-1}(\lambda)\left(WA-BW\right)-S_{m-2}(\lambda)\left(A-B\right)\\
&=S_{m-1}(\lambda)\begin{bmatrix}
0&\alpha\\
(y-2)\alpha&0
\end{bmatrix}-S_{m-2}(\lambda)\begin{bmatrix}
0&1\\
(y-2)&0
\end{bmatrix}\\
&=\begin{bmatrix}
0&S_{m-1}(\lambda)\alpha-S_{m-2}(\lambda)\\
(y-2)(S_{m-1}(\lambda)\alpha-S_{m-2}(\lambda))&0
\end{bmatrix}
\end{align*}
\vspace{5pt}
\noindent Now let $m\leq -1$ and note that $\trace(W^{-1})=\trace(W)$. We have:
\begin{align*}
R_{i,j}=W^mA-BW^m&=\left(W^{-1}\right)^{|m|}A-B\left(W^{-1}\right)^{|m|}\\
&=S_{|m|}(\lambda)A-S_{|m|-1}(\lambda)WA-S_{|m|}(\lambda)B+S_{|m|-1}(\lambda)BW\\
&=S_{|m|}(\lambda)(A-B)-S_{|m|-1}(\lambda)(WA-BW)\\
&=S_{|m|}(\lambda)\begin{bmatrix}
0&1\\
(y-2)&0
\end{bmatrix}-S_{|m|-1}(\lambda)\begin{bmatrix}
0&\alpha\\
(y-2)\alpha&0
\end{bmatrix}\\
&=\begin{bmatrix}
0& S_{|m|}(\lambda)-S_{|m|-1}(\lambda)\alpha\\
(y-2)(S_{|m|}(\lambda)-S_{|m|-1}(\lambda)\alpha)&0
\end{bmatrix}\qedhere
\end{align*}

\end{proof}

\section{Roots of the Riley Polynomial and Double-Twist Knots}\label{roots}

Because of Theorem \ref{itsallaboutRiley}, the next section is devoted to finding roots larger than $2$ of $\phi_K(x,y)$. We now assume $y\in\mathbb{R}$. Some results of the section hold for any $x\in \mathbb{R}$; some only follow, or follow more easily in the case we take $x=x_n=e^{\pi i/n}+e^{-\pi i/n}=2\cos(\pi/n)$. Our applications of the lemmas of the subsequent sections only require the statements in the case that $x_n=2\cos(\pi/n)$.

\begin{lemma}\label{leading} For fixed $x\in\mathbb{R}$, we have that:
\[\lim_{y\to\infty}\phi_K(x,y)=\left\{\begin{array}{cc}
\infty & \text{if } m \text{ odd and positive or } m \text{ even and negative}\\
-\infty &\text{if } m \text{ even and positive or } m \text{ odd and negative}.
\end{array}\right.\]
\end{lemma}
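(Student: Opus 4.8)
The plan is to extract the leading behavior in $y$ from the formula for $\phi_K(x,y)$ given in Proposition \ref{rep?}. Recall that $\phi_K$ is built out of the Chebyshev polynomials $S_j(\lambda)$ and the auxiliary polynomials $\alpha = \alpha_k(x,y)$ and $\lambda = \lambda_k(x,y)$. Since $x$ is fixed and $y \to \infty$, the first step is to determine the degree in $y$ and the sign of the leading coefficient of each of the ingredient polynomials. From Lemma \ref{almostrep}, $\alpha = 1 + (y+2-x^2)S_{k-1}(y)(S_k(y)-S_{k-1}(y))$. Using $\deg S_j(y) = j$ with positive leading coefficient, the factor $S_{k-1}(y)(S_k(y)-S_{k-1}(y))$ has degree $(k-1) + (k-1) = 2k-2$ (the difference $S_k - S_{k-1}$ has degree $k$ with positive leading term, so the product has degree $2k-1$)— more precisely $\deg\bigl(S_{k-1}(y)(S_k(y)-S_{k-1}(y))\bigr) = 2k-1$, hence $\deg_y \alpha = 2k$ with positive leading coefficient. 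Similarly from Lemma \ref{trace}, $\lambda = x^2 - y - (y-2)(y+2-x^2)S_k(y)S_{k-1}(y)$; the dominant term is $-(y)(y)(y^k)(y^{k-1}) = -y^{2k+1}$, so $\deg_y \lambda = 2k+1$ with \emph{negative} leading coefficient.

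Next I would feed these into the recursive structure. For $m \geq 1$, $\phi_K = S_{m-1}(\lambda)\alpha - S_{m-2}(\lambda)$. Since $S_{m-1}$ has degree $m-1$, $S_{m-1}(\lambda)$ has degree $(m-1)(2k+1)$ in $y$ with leading coefficient having sign $(-1)^{m-1}$ (because $\lambda$'s leading coefficient is negative and the leading coefficient of $S_{m-1}$ as a polynomial is positive). Multiplying by $\alpha$ (degree $2k$, positive leading coefficient) gives degree $(m-1)(2k+1) + 2k$ with sign $(-1)^{m-1}$; this strictly dominates $S_{m-2}(\lambda)$ which has degree $(m-2)(2k+1) < (m-1)(2k+1)+2k$. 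Hence $\lim_{y\to\infty}\phi_K(x,y)$ has the sign of $(-1)^{m-1}$, i.e. $+\infty$ for $m$ odd and $-\infty$ for $m$ even, matching the claim. For $m \leq -1$, writing $|m|$ for $-m$, $\phi_K = S_{|m|}(\lambda) - S_{|m|-1}(\lambda)\alpha$; here $S_{|m|}(\lambda)$ has degree $|m|(2k+1)$ with sign $(-1)^{|m|}$, while $S_{|m|-1}(\lambda)\alpha$ has degree $(|m|-1)(2k+1)+2k = |m|(2k+1) - 1 < |m|(2k+1)$, so the first term dominates and the limit has sign $(-1)^{|m|}$, i.e. $+\infty$ for $m$ even and $-\infty$ for $m$ odd, again as claimed.

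The main obstacle — really the only place requiring care — is the bookkeeping of leading coefficients and the verification that in each case the "intended" leading term genuinely dominates rather than being cancelled; in particular one must confirm the degree counts $(m-1)(2k+1)+2k$ versus $(m-2)(2k+1)$ (and the analogue for negative $m$) so that no cancellation can occur, and that the leading coefficient of $\alpha$ and of $S_j$ are both strictly positive so the sign is exactly $(-1)^{m-1}$ (resp. $(-1)^{|m|}$). A small edge case to handle separately is $m = 1$, where $S_{m-2} = S_{-1} = 0$ and $\phi_K = \alpha$, whose limit is $+\infty$, consistent with $m$ odd positive; and $|m|=1$, where $\phi_K = S_1(\lambda) - S_0(\lambda)\alpha = \lambda - \alpha$, dominated by $\lambda \sim -y^{2k+1} \to -\infty$, consistent with $m$ odd negative. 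Once these are checked, the limit statement follows immediately by comparing leading terms.
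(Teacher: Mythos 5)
Your proof is correct and follows essentially the same route as the paper: identify the leading terms in $y$ of $\alpha$, $\lambda$, and $S_j$ (namely $y^{2k}$, $-y^{2k+1}$, and $y^j$), then compare degrees and signs of the two summands in Proposition \ref{rep?}. Your bookkeeping is in fact slightly more explicit than the paper's in the $m\le -1$ case, where you correctly single out the higher-degree term $S_{|m|}(\lambda)$ as dominant; since it carries the same sign $(-1)^{|m|}$ as the other summand, the conclusion agrees with the paper's either way.
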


\begin{proof}
Let $m\geq 1$ and $l,a,s_m$ denote the leading term of $\lambda(x,y),\alpha(x,y)$ and $S_{m}(y)$ respectively as functions of $y$.  Then the leading term $p$ of $\phi(x,y)=S_{m-1}(\lambda)\alpha-S_{m-2}(\lambda)$ as a function of $y$ is $p=s_{m-1}(l)a$. It is not hard to see that $l=-y^2s_ks_{k-1}$, $a=ys_ks_{k-1}$ and $s_m=y^m$. Thus the sign of $p$ depends only on the parity of $m$. In particular, the degree of $p$ is positive when $m$ is odd and negative when $m$ is even. A similar argument gives that the leading term of $\phi_K(x,y)$ is $-s_{|m|-1}(l)a$ when $m\leq -1$.
\end{proof}

Our goal in this section is to prove Theorem \ref{Polybigtheorem}; the proof of the case $m=2$ differs slightly from the general case. We prove this case first.

\begin{proposition}\label{m=2} Let $K=J(2k+1,4)$. Then $\phi_K(x_n,y)$ has a root $y_n>2$ for $n\geq 5$.
\end{proposition}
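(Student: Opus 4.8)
The strategy is to apply Theorem~\ref{itsallaboutRiley}: it suffices to exhibit, for each $n\geq 5$, a real root $y_n>2$ of the polynomial $y\mapsto\phi_K(x_n,y)$, where $x_n=2\cos(\pi/n)$ and, by Proposition~\ref{rep?}, $\phi_K(x,y)=S_1(\lambda)\alpha-S_0(\lambda)=\lambda\alpha-1$ with $\lambda=\lambda_k(x,y)$ and $\alpha=\alpha_k(x,y)$ as in Lemmas~\ref{trace} and~\ref{almostrep}. So the task reduces to analysing the one-variable real function $g_n(y):=\lambda_k(x_n,y)\,\alpha_k(x_n,y)-1$ on $(2,\infty)$ and locating a zero via the intermediate value theorem.

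\textbf{Key steps.} First I would evaluate $g_n$ at the endpoint $y=2$. Using Lemma~\ref{Srel}(1) (so $S_j(2)=j+1$), one computes $\alpha_k(x,2)=1+(4-x^2)\cdot k\cdot 1 = 1+k(4-x^2)$ and $\lambda_k(x,2)=x^2-2$, hence $g_n(2)=(x_n^2-2)(1+k(4-x_n^2))-1$. For $n\geq 5$ we have $x_n=2\cos(\pi/n)\geq 2\cos(\pi/5)$, so $x_n^2\geq 3+\varphi^{-1}$ or so — in any case $x_n^2>2$ and $4-x_n^2$ is small and positive, and I expect $g_n(2)>0$ for all $n\geq5$ and all $k\geq1$ (this is the inequality $(x_n^2-2)(1+k(4-x_n^2))>1$; since $x_n^2-2\geq 2\cos(2\pi/5)+? $... more carefully $x_n^2-2 = 2\cos(2\pi/n)$, which for $n\geq5$ is $\geq 2\cos(2\pi/5)>0$, and $1+k(4-x_n^2)\geq 1$, but one must check the product exceeds $1$; $2\cos(2\pi/5)\approx0.618<1$, so this needs the $k$-term: for $k\geq1$, $1+k(4-x_n^2)\geq 1+(4-x_n^2) = 3-2\cos(2\pi/n)\geq 3-2\cos(2\pi/5)\approx1.76$, and $0.618\times1.76>1$ — so $g_n(2)>0$ does hold, with room to spare). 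Next I would show $g_n(y)\to-\infty$ as $y\to\infty$: this is exactly the case $m=2$ (even, positive) of Lemma~\ref{leading}, giving $\lim_{y\to\infty}\phi_K(x_n,y)=-\infty$. Then by continuity $g_n$ has a zero in $(2,\infty)$, and since $g_n(2)>0$ that zero is strictly greater than $2$, which is the desired $y_n$.

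\textbf{Main obstacle.} The delicate point is verifying $g_n(2)>0$ uniformly in $n\geq 5$ and $k\geq 1$ — i.e.\ that $\bigl(2\cos(2\pi/n)\bigr)\bigl(1+k(4-x_n^2)\bigr)>1$. Since $x_n^2=2+2\cos(2\pi/n)$, this is $2\cos(2\pi/n)\bigl(1+k(2-2\cos(2\pi/n))\bigr)>1$. The factor $2\cos(2\pi/n)$ is increasing in $n$ and equals $2\cos(2\pi/5)=\tfrac{\sqrt5-1}{2}\approx0.618$ at $n=5$; the factor $1+k(2-2\cos(2\pi/n))$ is at least $1+(2-2\cos(2\pi/5))=3-\tfrac{\sqrt5-1}{2}=\tfrac{7-\sqrt5}{2}\approx2.38$ when $n=5,k=1$, and only grows with $k$; the product at the worst case $(n,k)=(5,1)$ is about $1.47>1$. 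For larger $n$ the first factor grows toward $2$ while the second stays $\geq1$, so the bound is safe; one should also double-check the boundary sign conditions needed to invoke Khoi's result (i.e.\ that the root satisfies $y_n>2$ strictly, which is automatic here). A secondary concern is confirming that $n=5$ is genuinely the smallest index that works — i.e.\ that for $n=2,3,4$ either $g_n(2)\leq0$ or no root above $2$ exists — but the proposition only asserts $n\geq5$, so establishing $g_n(2)>0$ together with Lemma~\ref{leading} suffices for what is claimed.
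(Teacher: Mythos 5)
Your proposal is correct and follows essentially the same route as the paper: write $\phi_K(x_n,y)=\lambda\alpha-1$, evaluate at $y=2$ to get $(x_n^2-2)(1+k(4-x_n^2))-1>0$ for $n\geq 5$, $k\geq 1$ (the paper handles $n\geq 6$ via $x_n^2\geq 3$ and checks $n=5$ separately, while you bound the worst case $(n,k)=(5,1)$ directly), and then combine with Lemma~\ref{leading} and the intermediate value theorem to produce a root $y_n>2$.
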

\begin{proof}
Here $m=2$, so we have that $\phi_K(x,y)=S_1(\lambda)\alpha-S_0(\lambda)=\lambda(x,y)\alpha(x,y)-1$ by Proposition \ref{rep?}. By Lemma \ref{leading} we know that there is $y_-> 2$ so that $\phi_K(x,y_-)<0$ for any real $x$. Considering the Riley polynomial at $y=2$ we see that:
\[\phi_K(x,2)=(x^2-2)(1+(4-x^2)k)-1\geq (x^2-2)(1+(4-x^2))-1> (x^2-2)-1=x^2-3\]

\noindent so long as $k\geq 1$. For $n\geq 6$ we have $x_n^2=(2\cos(\pi/n))^2\geq (2\cos(\pi/6))^2=3$. Hence $\phi_K
(x_n,2)> 0$ for $n\geq 6$. By the intermediate value theorem, there must be a root $y_n> 2$. 

A direct computation for $n=5$ shows we can do slightly better. Computing:
 \begin{align*}
 \phi_K(x_5,2)&=(2\cos(\pi/5)^2-2)(1+(4-2\cos(\pi/5)^2)k)-1\\
 &=\left(\frac{-1+\sqrt{5}}{2}\right)\left(\frac{7-\sqrt{5}}{2}\right)k-1\\
 &=\left(\frac{8\sqrt{5}-12}{4}\right)k-1> k-1\geq 0
\end{align*}

\noindent gives that the Riley polynomial is positive for $y_+=2$. Again we get a root $y_5>2$ by the intermediate value theorem.
\end{proof}

\begin{lemma}\label{alpha}
Let $x_n=2\cos({\pi/n})$. For $y\geq 2$, we have that $\alpha=\alpha(x_n,y)> 1$ for all $n\geq 2$.
\end{lemma}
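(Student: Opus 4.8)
The plan is to show $\alpha_k(x_n,y)>1$ by analyzing the explicit formula $\alpha = 1 + (y+2-x^2)S_{k-1}(y)\bigl(S_k(y)-S_{k-1}(y)\bigr)$ from Lemma \ref{almostrep}. Subtracting $1$, it suffices to prove that the product
\[
(y+2-x_n^2)\,S_{k-1}(y)\,\bigl(S_k(y)-S_{k-1}(y)\bigr)
\]
is strictly positive for $y\geq 2$ and $n\geq 2$. I would handle the three factors separately. For the middle factor, $S_{k-1}(y)>0$ for $y\geq 2$ by Lemma \ref{Srel}(3) (and for $k=1$, $S_0\equiv 1>0$). For the third factor, $S_k(y)-S_{k-1}(y)>0$ for $y\geq 2$ by Lemma \ref{Srel}(4); again the $k=1$ case is $S_1(y)-S_0(y)=y-1\geq 1>0$. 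So both Chebyshev factors are positive on $[2,\infty)$ regardless of $k\geq 1$.

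The only remaining point is the sign of $y+2-x_n^2$. Since $x_n = 2\cos(\pi/n)$ with $n\geq 2$, we have $x_n^2 = 4\cos^2(\pi/n) \leq 4\cos^2(\pi/4)\cdot[\text{wrong}]$ — more carefully, $\cos(\pi/n)$ is increasing in $n$ for $n\geq 2$ and $\cos(\pi/n)<1$, so $x_n^2 < 4$. Hence for $y\geq 2$,
\[
y + 2 - x_n^2 > 2 + 2 - 4 = 0,
\]
so this factor is strictly positive as well. Multiplying the three strictly positive factors gives $\alpha(x_n,y)-1>0$, i.e. $\alpha(x_n,y)>1$, for all $y\geq 2$ and all $n\geq 2$, which is the claim.

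I do not anticipate a genuine obstacle here: the lemma is essentially an assembly of the sign facts about Chebyshev polynomials collected in Lemma \ref{Srel} together with the elementary bound $x_n^2<4$. The only mild care needed is to check the base case $k=1$ by hand, since Lemma \ref{Srel}(3),(4) are stated for $n\geq 0$ and one wants $S_{k-1}=S_0$ and the difference $S_1-S_0$ to behave correctly — both are immediate. One could alternatively note that the bound $y+2-x_n^2>0$ can be replaced by the weaker observation that it is enough to have $y+2-x_n^2\geq 0$ with strictness coming from the Chebyshev factors, but since $y\geq 2$ already forces strict positivity there is nothing to optimize.
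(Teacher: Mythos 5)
Your proposal is correct and follows essentially the same route as the paper: factor $\alpha-1$ into $(y+2-x_n^2)$, $S_{k-1}(y)$, and $S_k(y)-S_{k-1}(y)$, and show each is positive via Lemma \ref{Srel}(3), Lemma \ref{Srel}(4), and the bound $x_n^2<4$. The extra care about the $k=1$ base case is harmless but unnecessary, since Lemma \ref{Srel}(3) and (4) already cover $S_0$ and $S_1-S_0$.
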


\begin{proof}
Recall that $\alpha(x_n,y)=1+(y+2-x_ n^2)S_{k-1}(y)\left(S_k(y)-S_{k-1}(y)\right)$. Lemma \ref{Srel}(3) gives that $S_{k-1}(y)> 0$ for all $y\geq 2$, and Lemma \ref{Srel}(4) gives that $S_k(y)-S_{k-1}(y)> 0$ for $y\geq 2$. Finally we have that $-2<x_n< 2$ and in particular $x_n^2< 4$ so that $(y+2-x_n^2)>0$ for $y\geq 2$. Thus, $\alpha(x_n,y)-1=(y+2-x_n^2)S_{k-1}(y)\left(S_k(y)-S_{k-1}(y)\right)> 0$ for $y\geq 2$ as it is a product of positive functions.
\end{proof}

\begin{lemma}\label{anylambda} Fix $x\in\mathbb{R}$. For any $c\leq x^2-2$ there exists $y_c\geq 2$ such that $\lambda(x,y_c)=c$.
\end{lemma}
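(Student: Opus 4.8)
The statement to prove is Lemma~\ref{anylambda}: for fixed $x \in \mathbb{R}$ and any $c \le x^2 - 2$, there exists $y_c \ge 2$ with $\lambda(x, y_c) = c$.

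The plan is to use the explicit formula $\lambda(x,y) = x^2 - y - (y-2)(y+2-x^2)S_k(y)S_{k-1}(y)$ from Lemma~\ref{trace} together with the intermediate value theorem. First I would evaluate $\lambda$ at the endpoint $y = 2$: since $(y-2)$ vanishes there, we get $\lambda(x,2) = x^2 - 2$. So at $y = 2$ the function already achieves the maximal value $x^2 - 2$ allowed for $c$; this handles the case $c = x^2 - 2$ immediately with $y_c = 2$, and for strictly smaller $c$ it gives one side of the IVT bracket.

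Next I would examine the behavior as $y \to \infty$. Using the leading-term analysis already carried out in the proof of Lemma~\ref{leading} (namely that the leading term of $\lambda$ in $y$ is $-y^2 \cdot (\text{leading term of }S_k S_{k-1})$, which has even degree $2k$ and negative leading coefficient), we see $\lambda(x,y) \to -\infty$ as $y \to \infty$. More concretely, for $y \ge 2$ Lemma~\ref{Srel}(3) gives $S_k(y), S_{k-1}(y) > 0$ and $(y-2)(y+2-x^2) \ge 0$ once $y \ge x^2 - 2$ (and $y \ge 2$), so the subtracted term is nonnegative and grows without bound, forcing $\lambda$ down past any target $c$. Hence for any $c < x^2 - 2$ we can pick $Y$ large enough that $\lambda(x, Y) < c \le x^2 - 2 = \lambda(x,2)$, and since $\lambda(x, \cdot)$ is continuous (a polynomial) on $[2, Y]$, the intermediate value theorem yields $y_c \in [2, Y]$ with $\lambda(x, y_c) = c$.

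This argument is essentially routine; the only mild subtlety — and the step I would be most careful about — is confirming that the subtracted term $(y-2)(y+2-x^2)S_k(y)S_{k-1}(y)$ is genuinely nonnegative and unbounded on $[2,\infty)$ without worrying about the sign of $(y+2-x^2)$ for small $y$. This is not actually an obstacle: we only need the function to be large (very negative) for large $y$ and equal to $x^2-2$ at $y=2$, and continuity in between does the rest, so the precise sign behavior on the interior of the interval is irrelevant. If one wants to be fully explicit, note that for $y \ge \max(2, x^2)$ all three factors $(y-2)$, $(y+2-x^2)$, $S_k(y) S_{k-1}(y)$ are nonnegative and the product tends to $+\infty$, which is all that is required.
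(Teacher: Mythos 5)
Your proof is correct and follows essentially the same route as the paper's: evaluate $\lambda(x,2)=x^2-2$, note $\lambda(x,y)\to-\infty$ as $y\to\infty$ via the subtracted term, and apply the intermediate value theorem on $[2,Y]$. If anything, your treatment of the sign of $(y+2-x^2)$ is slightly more careful than the paper's, which implicitly uses $x^2<4$ (valid for the relevant $x=2\cos(\pi/n)$) to get $\lambda(x,y)\le x^2-y$ on all of $[2,\infty)$, whereas you observe that only the values at $y=2$ and at large $y$ actually matter.
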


\begin{proof}
Recall that $\lambda(x,y)=x^2-y-(y-2)(y+2-x^2)S_k(y)S_{k-1}(y)$. As in Lemma \ref{alpha}, we have that $S_k(y)$, $S_{k-1}(y)$ and $(y+2-x^2)$ are positive when $y\geq 2$. Hence, $\lambda(x,y)-x^2+y=-(y-2)(y+2-x^2)S_k(y)S_{k-1}(y)\leq 0$ for all $y\geq 2$.

Now $\lambda(x,y)\leq x^2-y, $ so letting $y\to \infty$ we see that $\lambda(x,y)$ tends to $-\infty$ as $y$ grows. We also have that $\lambda(x,2)=x^2-2$. Since $\lambda$ is a continuous function, the lemma follows.
\end{proof}

\begin{theorem}\label{Polybigtheorem}
Fix $n\geq 2$, and let $K=J(2k+1,2m)$. Then there is a presentation of $\pi_1(X_K)$ with Riley polynomial $\phi_K(x,y)$ such that $\phi_K(2\cos(\pi/n),y)$ has a root $y_n>2$ in the following cases:

\begin{enumerate}
\item $n\geq 3$ when $m\leq -3$
\item $n\geq 4$ when $m=-2$
\item $n\geq 5$ when $m=2$
\item $n\geq 4$ when $m=3$
\item $n\geq 3$ when $m\geq 4$.
\end{enumerate}
\end{theorem}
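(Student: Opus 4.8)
The overall plan is an intermediate value theorem argument on the polynomial $\phi_K(x_n,\cdot)$ restricted to the ray $[2,\infty)$. Lemma~\ref{leading} already records the sign of $\phi_K(x_n,y)$ for $y$ large (it depends only on the parity and sign of $m$), so in each case it will suffice to produce a point $y_0\geq 2$ at which $\phi_K(x_n,y_0)$ has the opposite sign; then a root $y_n$ lies in $(y_0,\infty)$, and it is strictly greater than $2$ provided either $y_0>2$ or $\phi_K(x_n,2)\neq 0$. The case $m=2$ is exactly Proposition~\ref{m=2} and needs nothing further, so assume $m\geq 3$ or $m\leq -2$ in what follows.

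To locate $y_0$ I would use Lemma~\ref{anylambda}: since $x_n^2-2=2\cos(2\pi/n)$, every real number $c\leq 2\cos(2\pi/n)$ is realized as $c=\lambda(x_n,y_c)$ for some $y_c\geq 2$, and $y_c>2$ unless $c=2\cos(2\pi/n)$, because $\lambda(x_n,2)=x_n^2-2$. By Proposition~\ref{rep?}, at such a $y_c$ the Riley polynomial collapses to $S_{m-1}(c)\alpha-S_{m-2}(c)$ when $m\geq 1$ and to $S_{|m|}(c)-S_{|m|-1}(c)\alpha$ when $m\leq -1$. So I would take $c$ to be the smallest root of the lower-index Chebyshev factor: $c=2\cos((m-2)\pi/(m-1))$, the smallest root of $S_{m-2}$, when $m\geq 3$; and $c=2\cos((|m|-1)\pi/|m|)$, the smallest root of $S_{|m|-1}$, when $m\leq -2$. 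The $\alpha$-term then drops, so $\phi_K(x_n,y_c)$ equals $S_{m-1}(c)\alpha$, respectively $S_{|m|}(c)$, and since $y_c\geq 2$ we have $\alpha(x_n,y_c)>0$ by Lemma~\ref{alpha}, so the sign of $\phi_K(x_n,y_c)$ is the sign of $S_{m-1}(c)$, respectively $S_{|m|}(c)$. A short comparison of the relevant cosine arguments shows this $c$ lands strictly between the two smallest roots of $S_{m-1}$, respectively $S_{|m|}$, so Lemma~\ref{Sbehav} pins the sign down as $(-1)^m$, respectively $(-1)^{|m|+1}$; checking the four parity/sign combinations against Lemma~\ref{leading} shows this is always opposite to the large-$y$ sign of $\phi_K(x_n,\cdot)$, and the intermediate value theorem then yields the root. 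Finally, the constraint $c\leq 2\cos(2\pi/n)$ is, by monotonicity of cosine, equivalent to $n\geq 2(m-1)/(m-2)$ when $m\geq 3$ and to $n\geq 2|m|/(|m|-1)$ when $m\leq -2$; the right-hand side is $\leq 3$ precisely when $m\geq 4$ or $m\leq -3$ and equals $4$ when $m=3$ or $m=-2$, which reproduces exactly the bounds in the statement.

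It remains to deal with the boundary instances, those where $c=2\cos(2\pi/n)$ and hence $y_c=2$: these are $(m,n)\in\{(-3,3),(4,3),(-2,4),(3,4)\}$. Here I would evaluate $\phi_K(x_n,2)$ directly, using $\lambda(x_n,2)=x_n^2-2\in\{-1,0\}$ together with $S_0=1$, $S_1(z)=z$, $S_2(z)=z^2-1$, $S_3(z)=z^3-2z$. This gives $\phi_K(x_3,2)=1$ for $m=-3$; $\phi_K(x_3,2)=\alpha(x_3,2)=1+3k>0$ for $m=4$; $\phi_K(x_4,2)=-1$ for $m=-2$; and $\phi_K(x_4,2)=-\alpha(x_4,2)=-(1+2k)<0$ for $m=3$. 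In each case the value is nonzero and has sign opposite to $\lim_{y\to\infty}\phi_K(x_n,y)$ by Lemma~\ref{leading}, so the intermediate value theorem again produces a root strictly larger than $2$.

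The step I expect to require the most care is the sign bookkeeping: one must confirm separately, for $m$ even $\geq 4$, $m$ odd $\geq 3$, $m$ even $\leq -2$, and $m$ odd $\leq -3$, that the sign of $S_{m-1}(c)$, respectively $S_{|m|}(c)$, coming out of Lemma~\ref{Sbehav} is genuinely opposite to the sign predicted by Lemma~\ref{leading}, and also that $c$ lands in the \emph{open} interval between consecutive roots so that Lemma~\ref{Sbehav} is applicable. It is worth isolating why $m=2$ escapes this scheme: the lower-index factor is then $S_{m-2}=S_0=1$, which has no root, and the only available substitute, $c=0$ (the root of $S_{m-1}=S_1$), gives $\phi_K(x_n,y_c)=-1$, the \emph{same} sign as $\lim_{y\to\infty}\phi_K(x_n,y)=-\infty$; this degeneracy is exactly what forces the separate, and slightly stronger ($n\geq 5$), argument of Proposition~\ref{m=2}.
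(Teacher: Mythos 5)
Your proposal is correct and follows essentially the same route as the paper: realize the smallest root $c$ of the lower-index Chebyshev factor as a value of $\lambda$ via Lemma~\ref{anylambda}, read off the sign of $\phi_K(x_n,y_c)$ from Lemmas~\ref{Sbehav} and~\ref{alpha}, compare against Lemma~\ref{leading}, and translate the condition $c\leq x_n^2-2$ into the stated bounds on $n$. Your separate treatment of the boundary instances with $y_c=2$ is harmless but not needed, since $\phi_K(x_n,y_c)\neq 0$ already places the intermediate-value root strictly above $y_c\geq 2$.
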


\begin{proof}Again we choose the presentation of $\pi_1(X_K)$ as the one given in (\ref{favoritepresentation}). We will argue carefully the case of $m$ positive; the case of $m$ negative is argued similarly. When $m=2$ the result is proved by Proposition \ref{m=2}. We now assume that $m\geq 3.$ In this case we have that $\phi_K(x_n,y)=S_{m-1}(\lambda)\alpha-S_{m-2}(\lambda)$.

We proceed by noting that by Lemma \ref{leading}, there is  $y_0\geq 2$ such that $(-1)^{m}\phi_K(x_n,y_0)< 0$. Finding another $y_1\geq 2$, with $(-1)^{m}\phi_K(x_n,y_1)> 0$, would give us a root larger than $2$ by the intermediate value theorem. 

Let $(r_1,r_2)=\left(2\cos\left(\frac{(m-1)\pi}{m}\right)\!, 2\cos\left(\frac{(m-2)\pi}{m}\right)\right)$.  Lemma \ref{Sbehav} gives that $(-1)^{m-1}S_{m-1}(t)<0$ on $(r_1,r_2)$. We also have that $c=2\cos\left(\frac{(m-2)\pi}{m-1}\right)$ is a root of $S_{m-2}(t)$. Note that $c\in (r_1,r_2)$ for $m\geq 3$.  By Lemma \ref{anylambda}, if we assume $c\leq x_n^2-2$ then there is $y_c\geq 2$ so that $\lambda(x_n,y_c)=c$. Combined with the fact that $\alpha>1$ by Lemma \ref{alpha}, we have that
\begin{align*}
(-1)^{m-1}\phi_K(x_n,y_c)&=(-1)^{m-1}(S_{m-1}(\lambda(x_n,y_c))\alpha(x_n,y_c)-S_{m-2}(\lambda(x_n,y_c)))\\
&=(-1)^{m-1}(S_{m-1}(c)\alpha-S_{m-2}(c))\\
&=(-1)^{m-1}S_{m-1}(c)\alpha< (-1)^{m-1}S_{m-1}(c)< 0.
\end{align*}

\noindent Thus, as long as $c\leq x_n^2-2$ we have that $y_c=y_1\geq 2$ is the value we seek. 

Before computing when $c\leq x_n^2-2$ holds, we highlight how the case of $m$ negative differs. The argument is similar; the differences come from the fact that in this case $\phi_K(x_n,y)=S_{|m|}(\lambda)-S_{|m|-1}(\lambda)\alpha$. Now, let $(r_1',r_2')=\left(2\cos\left(\frac{|m|\pi}{|m|+1}\right), 2\cos\left(\frac{(|m|-1)\pi}{|m|+1}\right)\right)$ and $c'=2\cos\left(\frac{(|m|-1)\pi}{|m|}\right)$ and the we obtain a root of $\phi_K(x_n,y)$ so long as $c'\leq x_n^2-2$.

To conclude we need only determine when $c\leq x_n^2-2$ and when $c'\leq x_n^2-2$. Recall that $x_n=2\cos(\pi/n)$. For $m=3$ we see that $c=0\leq x_n^2-2=4\cos(\pi/n)^2-2$ so long as $n\geq 4$. Similarly if $m\geq 4$ then $c\leq-1\leq 4\cos(\pi/n)^2-2$ for all $n\geq 3$. If $m=-2$ then $c'=0\leq x_n^2-2$ so long as $n\geq 4$. If $m\leq -3$ then $c'\leq -1\leq x_n^2-2$ for $n\geq 3$. For these $n$, we can conclude that $\phi_K(x,y)$ has a root $y_n\in (2,\infty)$.
\end{proof}

\section{Another family of two-bridge knots}\label{moreresults}

Let $K_l$ denote the knot in Figure \ref{notDT} for the remaining section. We assume that $l\geq 2$; in this case, it can be shown that $K_l$ is \textit{not} a double-twist knot, and $g(K_l)=l$. These knots are two-bridge and have associated fraction $(10(l-1)+7)/(4(l-1)+3)$.

As for Theorem \ref{Bigtheorem}, to prove left-orderability of $\Sigma_n(K_l)$ for some $n$, we find certain roots of a Riley polynomial of $K$. By Theorem \ref{itsallaboutRiley}, the following theorem implies Theorem \ref{bigtwistythm}.

\begin{theorem}\label{bigtroots}
There is a presentation of $\pi_1(X_{K_l})$ with Riley polynomial $\phi_{K}(x,y)$ which has a root $y_n>2$ in the following cases:

\begin{enumerate}
\item $n\geq 5$ when $l=2$
\item $n\geq 4$ when $l=3$
\item $n\geq 3$ when $l\geq 4$
\end{enumerate}

\end{theorem}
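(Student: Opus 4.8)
The plan is to mirror the strategy of the proof of Theorem~\ref{Polybigtheorem} as closely as possible, since the knot $K_l$ is again two-bridge and only differs by having a third twist region. First I would fix a two-bridge presentation $\pi_1(X_{K_l}) = \langle a,b \mid va = bv\rangle$ with $a,b$ meridians, read off the generating word $v$ from the fraction $(10(l-1)+7)/(4(l-1)+3)$, and compute the Riley polynomial $\phi_K(x,y)$ using Lemma~\ref{matrixpower} to reduce powers of matrices to Chebyshev polynomials. Because $K_l$ has three twist boxes (sizes governed by $l$ and the fixed parameter from Figure~\ref{notDT}), I expect $\phi_K(x,y)$ to again take the shape $S_{l-1}(\mu)\beta - S_{l-2}(\mu)\gamma$ (or a close variant), where $\mu = \mu(x,y)$ plays the role of $\lambda$ — a trace of the relevant word — and $\beta,\gamma$ are explicit low-degree-in-the-outer-twist expressions built from $S_\bullet(y)$ and $x$. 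The genus being $l$ is the tell that the ``outer'' twist count is essentially $l$, so the $S_{l-1}, S_{l-2}$ dependence is the structurally important part.

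Next I would establish the three analogues of the technical lemmas used in Section~\ref{roots}: (i) a leading-term computation à la Lemma~\ref{leading} pinning down $\lim_{y\to\infty}\phi_K(x,y) = \pm\infty$ with sign controlled by the parity of $l$, giving a point $y_0 \geq 2$ where $(-1)^{l}\phi_K(x_n,y_0) < 0$; (ii) a positivity statement $\beta(x_n,y) > \gamma(x_n,y) \geq$ (something) for $y \geq 2$ and $-2 < x_n < 2$, proved exactly as Lemma~\ref{alpha} from parts (3) and (4) of Lemma~\ref{Srel}; and (iii) a surjectivity statement for $\mu$ analogous to Lemma~\ref{anylambda}, namely that $\mu(x,y)$ is continuous, equals $x^2 - 2$ (or whatever $\mu(x,2)$ works out to be) at $y=2$, and decreases to $-\infty$, so it hits every value $c \leq \mu(x,2)$ at some $y_c \geq 2$. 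Then, just as in Theorem~\ref{Polybigtheorem}, I would choose $c$ to be a root of $S_{l-2}$ lying in the interval $(r_1,r_2) = \left(2\cos\frac{(l-1)\pi}{l}, 2\cos\frac{(l-2)\pi}{l}\right)$ on which $(-1)^{l-1}S_{l-1}$ is negative (Lemma~\ref{Sbehav}), so that at $y = y_c$ the polynomial $(-1)^{l-1}\phi_K(x_n,y_c) = (-1)^{l-1}S_{l-1}(c)\beta < 0$, which combined with $y_0$ gives a root $y_n > 2$ by the intermediate value theorem — provided $c \leq \mu(x_n,2)$. The small cases $l = 2$ and $l = 3$ (and $n = 5, 4$ respectively) would be handled by a direct evaluation at $y = 2$, exactly as Proposition~\ref{m=2} does the $m=2$ case: plug in $x_n = 2\cos(\pi/n)$, show $\phi_K(x_n, 2) > 0$ using the fact that the inner twist parameter (here fixed by $l$, not a free $k$) makes the expression large, and conclude via IVT.

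The final bookkeeping step is to translate ``$c \leq \mu(x_n,2)$'' into a condition on $n$. Since $\mu(x_n,2)$ should again be $4\cos^2(\pi/n) - 2 = 2\cos(2\pi/n)$ (or a simple shift thereof), and $c$ is a specific cosine depending on $l$, this reduces to a comparison of cosines: for $l \geq 4$ the relevant root $c$ of $S_{l-2}$ is $\leq -1$, so $c \leq 2\cos(2\pi/n)$ holds for all $n \geq 3$; for the borderline cases one checks $n \geq 4$ or $n \geq 5$ suffices, matching the claimed ranges. I expect the main obstacle to be item (iii) above — verifying that $\mu(x,y)$, for this more complicated three-box word, is genuinely monotone (or at least bounded above by $\mu(x,2)$ and tending to $-\infty$) on $y \geq 2$; with three twist regions the trace $\mu$ is a messier polynomial in $y$ than $\lambda$ was, and the clean factorization $\mu - (x^2 - y) = -(y-2)(\cdots)(\text{positive})$ that made Lemma~\ref{anylambda} immediate may require more care to extract, possibly needing an auxiliary sign analysis of an intermediate Chebyshev product. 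A secondary subtlety is confirming the parity sign in the leading-term lemma, since an extra twist box can flip which of $\pm\infty$ occurs; but this is a finite check once the explicit leading terms of $\mu$, $\beta$, $\gamma$ are in hand.
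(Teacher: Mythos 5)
Your scaffolding matches the paper's: the Riley polynomial does take the form $S_{l-1}(\lambda)\alpha - S_{l-2}(\lambda)\beta$ with $\lambda=\trace(C)$ for the word $v=c^{l-1}d$, one does choose $c=2\cos\bigl(\tfrac{(l-2)\pi}{l-1}\bigr)$, a root of $S_{l-2}$ lying where $(-1)^{l-1}S_{l-1}$ is negative so that the $\beta$-term drops out, one does verify $\lambda(x,2)=x^2-2$ and $\lambda\to-\infty$ to realize $\lambda=c$ at some $y_c\geq 2$, and the numerical condition $c\leq x_n^2-2$ does produce exactly the claimed ranges of $n$. But your step (ii) contains a genuine error that breaks the argument. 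For this family the coefficient $\alpha(x,y)$ (your $\beta$) is \emph{not} a product of Chebyshev polynomials in $y$ with manifestly positive factors; it is the explicit cubic $\alpha=1-4x^2+2x^4+2y-x^2y-x^4y-y^2+2x^2y^2-y^3$, whose leading term in $y$ is $-y^3$. It is not positive on $[2,\infty)$ — indeed $\alpha(x,x^2-1)=-1$ — so no analogue of Lemma~\ref{alpha} via Lemma~\ref{Srel}(3),(4) is available, and your key evaluation $(-1)^{l-1}\phi_K(x_n,y_c)=(-1)^{l-1}S_{l-1}(c)\,\alpha<0$ has the wrong sign: since the end behavior is $(-1)^l\phi_K\to+\infty$ (so $(-1)^{l-1}\phi_K(x_n,y_0)<0$ for large $y_0$), a value of the \emph{same} sign at $y_c$ would give you nothing from the intermediate value theorem.

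The missing idea — and the one place where the paper's proof genuinely departs from Theorem~\ref{Polybigtheorem} — is that here one must prove $\alpha(x_n,y_c)<0$ rather than $\alpha>0$. The paper does this by showing $\alpha$ is strictly decreasing in $y$ on $[2,\infty)$ for the relevant $x_n$ (a discriminant computation on $\partial\alpha/\partial y$), evaluating $\alpha(x,x^2-1)=-1$, and then imposing the \emph{extra} constraint $c\leq 1=\lambda(x,x^2-1)$ (automatic for $l\geq 3$) so that the point $y_c$ with $\lambda(x_n,y_c)=c$ can be taken to satisfy $y_c\geq x_n^2-1$, forcing $\alpha(x_n,y_c)\leq -1<0$. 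With both factors negative, $(-1)^{l-1}S_{l-1}(c)\,\alpha(x_n,y_c)>0$, which is opposite in sign to the value at $y_0$, and the IVT applies. A smaller instance of the same sign slip appears in your treatment of $l=2$: you would need $\phi_K(x_n,2)<0$ there (the end behavior for $l=2$ is $+\infty$), and the paper instead evaluates at $y=x_n^2-1$, where $\phi_K=\lambda\alpha-\beta=(1)(-1)-0=-1$, requiring $n\geq 6$ with $n=5$ checked by hand. You flagged the parity of the leading term as a finite check, which is fair, but the sign of $\alpha$ on $[2,\infty)$ is not a bookkeeping issue — it is the substantive new lemma your outline lacks.
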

\subsection{Fundamental groups of two-bridge knots}

A two-bridge knot group has a presentation determined by a sequence of signs $S(p,q)$ which we now describe. Let $n\in\mathbb{Z}$ with $(n,p)=1$; then let  $\overline{n}$ denote the choice of representative of $n$ modulo $2p$ with $-p<\overline{n}<p$. Then let $S(p,q)=\{\epsilon_1,\ldots, \epsilon_{p-1}\}$ denote the (ordered) set of signs of the representatives $\overline{iq}$ for $1\leq i<p$. In other words, $\epsilon_i=|\overline{iq}|/\overline{iq}$.

\begin{proposition}[Proposition 1 in \cite{Ri72}]\label{epsilons} Let $K$ be the two-bridge knot with fraction $(p,q)$ with $q$ odd. Then $\pi_1(X_K)$ has a presentation
\[\pi_1(X_K)= \langle a,b|va=bv\rangle\]

\noindent where $v=a^{\epsilon_1}b^{\epsilon_2}\cdots a^{\epsilon_{p-2}}b^{\epsilon_{p-1}}$.
\end{proposition}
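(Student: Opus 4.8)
The plan is to put $K = K(p,q)$ into Schubert's normal form and extract the group from the bridge decomposition of $S^3$ that this form encodes. Write $S^3 = B_1 \cup_S B_2$ as two $3$-balls glued along a $2$-sphere $S$, arranged so that $K$ meets each $B_i$ in a trivial two-string tangle; the four points of $K \cap S$ are the arc-endpoints. First I would pass to the exterior $X_K = S^3 \setminus N(K)$, which inherits a decomposition $X_K = X_1 \cup_T X_2$ in which each $X_i = B_i \setminus N(K)$ is a genus-two handlebody and $T = S \setminus N(K\cap S)$ is a four-punctured sphere. Computing $\pi_1(X_1) \cong \langle a,b\rangle$, free of rank two on meridians $a, b$ of the two over-bridges, and recording $\pi_1(T)$ (free of rank three) together with the inclusion-induced map $\pi_1(T) \to \pi_1(X_1)$, sets up a van Kampen computation.

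Next I would apply van Kampen to obtain $\pi_1(X_K) = \pi_1(X_1) *_{\pi_1(T)} \pi_1(X_2)$. Because $X_2$ is again a handlebody on two trivial arcs, its meridians are expressible through $T$ in terms of $a$ and $b$, so no new generators survive; and the two core arcs of the tangle in $B_2$ bound disks, which imposes two relations among the images of the $\pi_1(T)$-generators in $\langle a, b\rangle$. The sphere relation in $T$ (the product of the four boundary meridians is trivial) makes one of these redundant, leaving a single defining relation. This already yields a two-generator one-relator group $\langle a, b \mid va = bv\rangle$, where $v$ is the word obtained by tracing one tangle arc of $B_2$ across $S$ and recording, in order and with sign, the over-bridge meridians it passes beneath.

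The combinatorial core is to identify this word as $v = a^{\epsilon_1} b^{\epsilon_2} \cdots a^{\epsilon_{p-2}} b^{\epsilon_{p-1}}$. Here I would use the standard feature of Schubert's form that the $p-1$ under-passing strands are linearly ordered $1, \ldots, p-1$ along $S$ and that, as one traces the arc, the $i$-th under-pass occurs at the position $iq \bmod 2p$; the hypothesis $q$ odd is what puts $K$ in the form where consecutive under-passes meet the two bridges alternately, forcing the letters of $v$ to alternate $a, b, a, b, \ldots$ (and, since $p$ is odd, to run from $a^{\epsilon_1}$ to $b^{\epsilon_{p-1}}$). It then remains to check that the exponent $\epsilon_i = \pm 1$ — recording the sign of the $i$-th crossing, equivalently whether the $i$-th strand approaches its bridge from one side or the other — is exactly the sign of the representative $\overline{iq} \in (-p,p)$, i.e. $\epsilon_i = |\overline{iq}|/\overline{iq}$, according to whether $iq \bmod 2p$ lies in $[0,p)$ or in $(p, 2p)$.

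The main obstacle is precisely this last bookkeeping: establishing that the geometric over/under and orientation data at the $i$-th crossing of the normal form are governed by multiplication by $q$ modulo $2p$, and matching them to the sign of $\overline{iq}$ term by term, while simultaneously confirming the redundancy of the second arc-relation via the four-punctured-sphere identity. Once the dictionary between the crossing pattern and the residues $\overline{iq}$ is in place, the presentation follows, and the word $v$ has the stated alternating form and even length $p-1$, completing the identification.
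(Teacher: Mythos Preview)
The paper does not prove this proposition at all: it is quoted verbatim as Proposition~1 of Riley's 1972 paper \cite{Ri72} and used as a black box. So there is no ``paper's own proof'' to compare against; your sketch is an attempt to supply what the paper simply cites.

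That said, your outline is the standard one and is essentially what Riley (and Schubert before him) does. The decomposition $S^3 = B_1 \cup_S B_2$ into two trivial tangles, the identification of each $X_i$ as a genus-two handlebody with free $\pi_1$ on meridians, and the van Kampen amalgamation over the four-punctured sphere are all correct and lead to the two-generator one-relator form. Your identification of the remaining relator as coming from a single under-arc, with the other relation killed by the lantern-type identity in $\pi_1(T)$, is also right.

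The only place where your sketch is genuinely incomplete is exactly where you flag it: the verification that the $i$-th letter of $v$ is $a$ or $b$ according to the parity of $i$ and carries exponent $\epsilon_i = \mathrm{sgn}(\overline{iq})$. This is not deep, but it does require a careful parametrisation of the Schubert diagram: one labels the $2p$ strand-feet on $S$ by residues mod $2p$, observes that the under-arc connects foot $j$ to foot $j+q$ (or $-j$, depending on convention), and then reads off which over-bridge is crossed and in which direction at step $i$. The hypothesis that $q$ is odd is what forces the alternation $a,b,a,b,\ldots$, as you say. If you want to turn this into a complete proof you should write down the explicit labelling and trace one arc through; Riley's original paper, or the treatments in Burde--Zieschang or Kawauchi, give models for this computation.
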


For convenience of notation, a sequence of signs $S(p,q)$ can be abbreviated so that consecutive instances of $+1$ or $-1$ in the sequence $k$ times will be denoted $\langle k\rangle$ and $\langle -k\rangle$ respectively. For example, the sequence $\{1,1,-1,-1\}$ will be abbreviated $\langle 2\rangle \langle -2\rangle$. A sequence can also be shortened by denoting a repeated subsequence using exponents; for example we might write $(\langle 3\rangle \langle -2\rangle)^2=\langle 3\rangle \langle -2\rangle\langle 3\rangle \langle -2\rangle$.

Using this notation, a sequence of signs always has the form $S(p,q)=\langle c_1\rangle \langle -c_2\rangle \cdots \langle -c_{k-1}\rangle\langle c_k\rangle$ where $c_i>0$. The only non-trivial part of this statement is that the sequence always ends on a $+1$. It is not difficult to check that $\epsilon_{p-1}=+1$ always holds.

We now discuss the reduction operation of Hirasawa-Murasugi which can be performed to a sequence of signs $S(p,q)$ \cite{HM07}. Let $p=mq+r$ where $m\geq 2$ and $0<r<p$. The reduction operation takes $S=\langle c_1\rangle \langle -c_2\rangle \cdots \langle -c_{k-1}\rangle\langle c_k\rangle$ and yields first $S_1^*=\{\langle c_1^*\rangle \langle -c_2^*\rangle\cdots \langle c_k^*\rangle\}$ where $c_i^*=c_i-2$. If it happens that $c_i^*=0$, then either we have 
\[\begin{array}{lcr}
\langle c_{i-1}^*\rangle \langle 0\rangle \langle c_{i+1}^*\rangle  &  \mbox{or} & \langle -c_{i-1}^*\rangle \langle 0\rangle \langle -c_{i+1}^*\rangle
\end{array}\]

\noindent in the sequence $S_1^*$. In this case it makes sense to combine the same-sign terms in the first case simply to $\langle c_{i-1}^*+c_{i+1}^*\rangle$ and the second to $\langle -( c_{i-1}^*+c_{i+1}^*)\rangle$. After removing zeros $\langle 0\rangle$ in the sequence $S_1^*$ and combining the same-sign terms we obtain a simpler sequence denoted $S^*(p,q)$.

\begin{proposition}[Proposition 7.1 in \cite{HM07}] \label{reduce}
Let $S(p,q)$ denote the sequence of signs for the fraction $(p,q)$, and write $p=mq+r$ with $m\geq 2$ and $0<r<p$. Then the reduced sequence $S^*(p,q)=S(p-2q,q)$.
\end{proposition}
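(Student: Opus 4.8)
The plan is to work directly with the combinatorial definition of $S(p,q)$, translating everything into statements about the integers $iq\bmod p$. First I would record the formula $\epsilon_i=(-1)^{\lfloor iq/p\rfloor}$ for $1\le i\le p-1$: the representative $\overline{iq}\in(-p,p)$ of $iq$ modulo $2p$ is positive exactly when $iq\bmod 2p$ lies in $(0,p)$, i.e.\ exactly when $\lfloor iq/p\rfloor$ is even. Writing $p=mq+r$ with $m=\lfloor p/q\rfloor$ and $0<r<q$ (the stated hypothesis forces $p>2q$, so $m\ge 2$), the sign sequence then breaks into $q$ maximal runs: for $k=1,\dots,q$ the $k$-th run consists of the indices $i$ with $\lfloor iq/p\rfloor=k-1$, it has sign $(-1)^{k-1}$, and since $p/q>2$ every run has length $c_k\in\{m,m+1\}$, in particular $c_k\ge 2$.

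The heart of the argument is the claim that the reduction operation, carried out on $S(p,q)$ viewed as a $\pm1$-sequence, deletes precisely the indices $i$ with $iq\bmod p\ge p-2q$ and keeps the rest in order. Since the reduction replaces each run-length $c_k$ by $c_k-2$ (deleting a run outright when $c_k=2$, and merging two surviving neighbouring runs exactly when an odd number of runs was deleted between them, which matches the combination rule since consecutive runs alternate in sign), it is enough to check that within the $k$-th run the indices $i$ with $\delta_i:=(iq\bmod p)<p-2q$ form an initial segment of length exactly $c_k-2$. A short computation using $i=\lceil(k-1)p/q\rceil$ at the start of the run shows that along the $k$-th run $\delta_i$ runs through $q-\rho_k,\,2q-\rho_k,\,\dots,\,c_kq-\rho_k$ where $\rho_k=(k-1)r\bmod q$; these increase, so the condition $\delta_i<p-2q=(m-2)q+r$ indeed cuts out an initial segment, and a case analysis on whether $c_k=m$ or $m+1$ (handling the boundary runs $k=1$ and $k=q$ via the first-run values $\delta_i=iq$ and the palindromic symmetry $\epsilon_i=\epsilon_{p-i}$) confirms its length is $c_k-2$. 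This bookkeeping is where essentially all of the work sits; the only genuine merging occurs when $m=2$, where many runs have $c_k=2$.

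Granting this, the proposition is immediate. Put $p'=p-2q$ and let $t\mapsto i(t)$ be the order-preserving bijection from $\{1,\dots,p'-1\}$ onto the surviving indices; the length count above forces $t=i-2\lfloor iq/p\rfloor$. Writing $f=\lfloor iq/p\rfloor$, so that $iq=fp+\delta_i$ with $0<\delta_i<p'$ (here we use that $i$ survived), we compute $tq=(i-2f)q=f(p-2q)+\delta_i=fp'+\delta_i$, hence $\lfloor tq/p'\rfloor=f$. Therefore the $t$-th entry of $S(p',q)$ is $(-1)^{\lfloor tq/p'\rfloor}=(-1)^f=\epsilon_{i}$, which is exactly the $t$-th entry of the reduced sequence, so $S^*(p,q)=S(p-2q,q)$. (The hypothesis that $q$ is odd enters only through Proposition \ref{epsilons}, to know these sequences describe a knot-group presentation at all, and through the palindromic symmetry used above.)
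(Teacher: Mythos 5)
The paper does not actually prove this proposition --- it is imported verbatim from Hirasawa--Murasugi (their Proposition 7.1) --- so there is no internal argument to compare yours against; you have supplied a self-contained proof of a statement the paper uses as a black box. On its own terms your argument is correct. The identity $\epsilon_i=(-1)^{\lfloor iq/p\rfloor}$, the decomposition into $q$ runs of lengths $c_k\in\{m,m+1\}$ indexed by $\lfloor iq/p\rfloor=k-1$, the observation that within the $k$-th run the residues $\delta_i=iq\bmod p$ increase by $q$ and take the values $jq-\rho_k$, and the re-indexing computation $tq=fp'+\delta_i$ with $0<\delta_i<p'$ all check out. The two steps you compress do go through: the count of deleted entries per run is exactly two because $\delta_i\ge p-2q=(m-2)q+r$ is equivalent to $j\ge m-2+(r+\rho_k)/q$, and $r+\rho_k\le q$ precisely when $c_k=m$ while $r+\rho_k>q$ precisely when $c_k=m+1$ (the boundary case $r+\rho_k=q$ occurs only for $k=q$, where $c_q=m$ and the count is still two); and $t=i-2\lfloor iq/p\rfloor$ is forced because the two deleted entries of the run containing a surviving $i$ lie after $i$, while surjectivity onto $\{1,\dots,p'-1\}$ follows from the survivor count $(p-1)-2q=p'-1$ together with monotonicity. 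Your remark about merging is harmless but not really needed: deleting the last two indices of every run and reading off the surviving $\pm1$-string is tautologically the same $\pm1$-string as ``subtract $2$ from every run length,'' so the combination rule takes care of itself.
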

\begin{lemma}[Proposition 6.1 in \cite{HM07}]\label{shapeofS}
Let $S(p,q)=\langle c_1\rangle \langle -c_2\rangle \cdots \langle -c_{k-1}\rangle\langle c_k\rangle$ be a sequence of signs for the two-bridge knot $K(p,q)$ with $p=mq+r$. Then $c_i$ is either $m$ or $m+1$, and $c_1=c_k=m$.
\end{lemma}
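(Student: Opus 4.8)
The plan is to recast the sign sequence as a rotation on a circle of circumference $2p$ and read off the run lengths geometrically. Set $P_i = iq \bmod 2p \in [0,2p)$. Since $q$ is odd and coprime to $p$, one checks that $iq \not\equiv 0, p \pmod{2p}$ for $1 \le i \le p-1$, so each sign is well defined, with $\epsilon_i = +1$ exactly when $P_i \in (0,p)$ and $\epsilon_i = -1$ exactly when $P_i \in (p,2p)$. Write $p = mq + r$ with $m = \lfloor p/q \rfloor \ge 2$ and $0 < r < q$ (the remainder is nonzero since $\gcd(p,q)=1$ forces $q \ge 3$). The single observation driving everything is that passing from $i$ to $i+1$ adds $q$ modulo $2p$: because $0 < q < p$, a point of $(0,p)$ maps to $P_i + q$ with no reduction, staying in $(0,p)$ when $P_i + q < p$ and crossing into $(p,2p)$ otherwise, and symmetrically for points of $(p,2p)$. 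In particular, along any maximal run of constant sign the positions $P_i$ increase by exactly $q$ at each step, with no double wrap.

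I would first treat an interior $+$-run, say $\epsilon_i = \cdots = \epsilon_{i+\ell-1} = +1$ bounded by $-1$ on both sides. By the step rule, the predecessor $P_{i-1} \in (p,2p)$ forces a wrap, so $P_i = P_{i-1} + q - 2p \in (0,q)$; and the successor being $-1$ forces $P_{i+\ell-1} \in (p-q, p)$. Since $P_{i+\ell-1} = P_i + (\ell-1)q$, subtracting gives $(\ell-1)q = P_{i+\ell-1} - P_i \in (p-2q, p)$, hence $\ell - 1 \in (p/q - 2,\, p/q)$ and $\ell \in (p/q - 1,\, p/q + 1)$. As $p/q = m + r/q$ with $0 < r/q < 1$, the only integers in this open interval are $m$ and $m+1$, so $\ell \in \{m, m+1\}$. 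An interior $-$-run is handled by the identical computation, with the windows $(0,q)$ and $(p-q,p)$ for the first and last positions replaced by $(p, p+q)$ and $(2p-q, 2p)$; the resulting bound $(\ell-1)q \in (p-2q, p)$ is the same. Thus every interior block length $c_i$ equals $m$ or $m+1$.

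Next I would pin the two boundary blocks $c_1$ and $c_k$ to the value $m$ exactly. For the first block, $P_j = jq$ without reduction for $j = 1, \ldots, m$ because $jq \le mq = p - r < p$, so these lie in $(0,p)$ and are all $+1$, whereas $P_{m+1} = (m+1)q = p + (q-r) \in (p, 2p)$ is $-1$; hence $c_1 = m$. For the last block I would use that $q$ odd gives $pq \equiv p \pmod{2p}$, so $(p-j)q \equiv p - jq \pmod{2p}$; this representative lies in $(0,p)$ precisely for $j = 1, \ldots, m$ and lands in $(p,2p)$ at $j = m+1$. Therefore the terminal run is $i = p-m, \ldots, p-1$, which both reconfirms $\epsilon_{p-1} = +1$ and gives $c_k = m$. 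Combining this with the interior analysis yields the lemma.

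The real work is the wraparound bookkeeping: the whole argument rests on the claim that consecutive positions inside one run differ by exactly $q$ and never skip a full turn, and on locating the first and last position of each run within the narrow windows $(0,q)$ and $(p-q,p)$. These steps are elementary but must be stated carefully, since a single miscounted wrap would destroy the bound $\ell \in \{m, m+1\}$. The only place the hypotheses enter in an essential way is the boundary computation, where the oddness of $q$ (through $pq \equiv p \pmod{2p}$) is exactly what forces the terminal block to have length $m$ rather than $m+1$.
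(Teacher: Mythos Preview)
The paper does not prove this lemma; it is quoted directly from Hirasawa--Murasugi \cite{HM07} (their Proposition~6.1) and used as a black box in the computation of the sign sequence for $K_l$. So there is no ``paper's own proof'' to compare against.

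That said, your argument stands on its own and is essentially correct. Reinterpreting the sign sequence as the orbit of the rotation $P_i \mapsto P_i + q$ on $\mathbb{Z}/2p\mathbb{Z}$, split into the two arcs $(0,p)$ and $(p,2p)$, is exactly the right picture, and your bookkeeping for the entry/exit windows of a run is clean: the first point of a run always lies within distance $q$ of the boundary just crossed, the last point within distance $q$ of the boundary about to be crossed, and the run length $\ell$ is pinned to $\{m,m+1\}$ by the resulting inequality $(\ell-1)q \in (p-2q,p)$. The boundary-block computation using $pq \equiv p \pmod{2p}$ is also correct and nicely symmetric.

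One small wrinkle: the parenthetical ``$\gcd(p,q)=1$ forces $q \ge 3$'' is not literally true, since $q=1$ (the $(2,p)$-torus knot) is a perfectly good two-bridge fraction. What you presumably mean is that $q$ is odd and you are tacitly assuming $q>1$, whence $q\ge 3$; your division $p = mq + r$ with $0 < r < q$ then goes through. The case $q=1$ is degenerate (the sign sequence is a single block $\langle p-1\rangle$) and the lemma holds trivially there once one agrees on the convention $m = p-1$, $r = 1$; you should either note this separately or make the hypothesis $q>1$ explicit.
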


\begin{proposition} The sequences of signs for the two-bridge knots $K(10s+7,4s+3)$  is $S(10s+7,4s+3)=\langle 2\rangle\langle -2\rangle(\langle 3\rangle\langle -2\rangle)^{2s}\langle 2\rangle=(\langle 2\rangle\langle -2\rangle\langle 3\rangle\langle -2\rangle\langle 1\rangle)^s\langle 2\rangle\langle -2\rangle\langle 2\rangle$ for any integer $s\geq 1$.
\end{proposition}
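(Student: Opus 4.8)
The plan is to pin down $S(10s+7,4s+3)$ by combining Lemma~\ref{shapeofS}, which constrains the coarse shape of the sequence, with Proposition~\ref{reduce}, which identifies what it reduces to. Set $p=10s+7$ and $q=4s+3$. First I would record the arithmetic: $p=2q+(2s+1)$, so in the notation of Lemma~\ref{shapeofS} and Proposition~\ref{reduce} we are in the case $m=2$, $r=2s+1$ (note $m\ge 2$ and $0<r<p$); moreover $\gcd(p,q)=1$ and $q$ is odd. Lemma~\ref{shapeofS} then gives $S(10s+7,4s+3)=\langle c_1\rangle\langle -c_2\rangle\cdots\langle c_k\rangle$ with every $c_i\in\{2,3\}$ and $c_1=c_k=2$; also $k$ is odd, since a sign sequence begins and ends with $+1$, and $\sum_{i=1}^{k}c_i=p-1=10s+6$, since the blocks partition the $p-1$ signs.

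Next I would compute the reduction. By Proposition~\ref{reduce}, $S^{*}(10s+7,4s+3)=S(10s+7-2(4s+3),4s+3)=S(2s+1,4s+3)$. Since the sign sequence depends only on $q$ modulo $2p$ and $4s+3\equiv 1\pmod{4s+2}$, this equals $S(2s+1,1)$; the representatives $\overline{i}=i$ are positive for $1\le i\le 2s$, so $S^{*}(10s+7,4s+3)=\langle 2s\rangle$. Now I would run the Hirasawa--Murasugi reduction operation directly on the (a priori unknown) sequence from the previous paragraph and match the two computations of $S^{*}$. Subtracting $2$ from each entry yields $c_i^{*}=c_i-2\in\{0,1\}$, with $c_i^{*}=1$ exactly when $c_i=3$. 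Deleting the $\langle 0\rangle$ entries (including any at the two ends) leaves precisely the entries with $c_i=3$, each equal to $1$ and retaining its original sign; then same-sign neighbours are combined. For the result to be the single positive block $\langle 2s\rangle$, every entry with $c_i=3$ must lie at an odd (hence positive) position, for otherwise a negative block would survive; and once that holds, these entries all become mutually adjacent after the deletions and combine into one positive block whose length is their number. Hence exactly $2s$ of the $c_i$ equal $3$.

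It remains to solve for the $c_i$. With $2s$ entries equal to $3$ and $k-2s$ equal to $2$, the identity $\sum c_i=10s+6$ gives $2s+2k=10s+6$, so $k=4s+3$. The odd positions in $\{1,\dots,4s+3\}$ are $1,3,5,\dots,4s+3$, and since $c_1=c_{4s+3}=2$ the two ends are excluded, leaving exactly the $2s$ odd positions $3,5,\dots,4s+1$ to carry the value $3$. Thus $c_i=3$ for $i\in\{3,5,\dots,4s+1\}$ and $c_i=2$ otherwise, which is $S(10s+7,4s+3)=\langle 2\rangle\langle -2\rangle(\langle 3\rangle\langle -2\rangle)^{2s}\langle 2\rangle$. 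The second equality in the statement is then pure bookkeeping: expanding $(\langle 2\rangle\langle -2\rangle\langle 3\rangle\langle -2\rangle\langle 1\rangle)^{s}\langle 2\rangle\langle -2\rangle\langle 2\rangle$ and merging each trailing $\langle 1\rangle$ with the $\langle 2\rangle$ immediately following it (both positive) into a $\langle 3\rangle$ reproduces the left-hand side, and no other merging occurs since all remaining consecutive blocks have opposite signs.

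The step I expect to be the main obstacle is executing the reduction correctly when $m=2$: the boundary entries $c_1$ and $c_k$ themselves become $\langle 0\rangle$ after subtracting $2$, whereas the description of the reduction recalled above treats explicitly only interior zeros. I would therefore justify that end zeros are removed in the same way --- this is forced by the reduced sequence being an honest sign sequence, which must start with $+1$ --- and then check carefully that the combining step leaves a single block, so that matching against $\langle 2s\rangle$ really does force $|\{i:c_i=3\}|=2s$. The remaining ingredients (the modular arithmetic, the computation of $\gcd(p,q)$, and the final counting) are routine.
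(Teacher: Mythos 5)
Your proof is correct and follows essentially the same route as the paper: compute $S^*(10s+7,4s+3)=S(2s+1,1)=\langle 2s\rangle$ via Proposition~\ref{reduce}, use Lemma~\ref{shapeofS} to force $c_i\in\{2,3\}$ with $c_1=c_k=2$, observe that positivity of the reduced sequence forces every $c_i=3$ to sit at an odd position and that there are exactly $2s$ of them, and finish with the identity $\sum c_i=p-1$. Your final counting (solving for $k=4s+3$ and noting the $2s$ interior odd positions are exactly filled) is a slightly cleaner packaging of the paper's maximality argument, but it is not a different method.
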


\begin{proof}
We first compute the reduced sequence $S^*(10s+7,4s+3)=S(2s+1,4s+3)$ by Proposition \ref{reduce}. Now $S(2s+1,4s+3)$ is the sequence of signs for the knot $K(2s+1,4s+3)\sim K(2s+1,1)$ since $4s+3\equiv 1$ (mod $2s+1)$. Thus we have that $S^*(10s+7,4s+3)=S(2s+1,1)$. It is simple to compute $S(2s+1,1)=\langle 2s\rangle$.

We work backwards now to write $S(10s+7,4s+3)$. Since \sloppy $S(10s+7,4s+3)=\langle c_1\rangle \langle -c_2\rangle \cdots \langle -c_{k-1}\rangle\langle c_k\rangle$ with each $c_i=2$ or $3$, we have that each $c_i^*=0$ or $1$. After reducing we can conclude that $S(2s+1,1)=S^*(10s+7,4s+3)=\langle 1\rangle\langle -1\rangle\cdots \langle 1\rangle$. Since we know that in fact $S(2s+1,1)=\langle 2s\rangle$ we can conclude that no $c_{2i}=3$ for any $i$, so $c_{2i}=2$.

Now we know the sequence is of the following form.
\[\langle 2\rangle\langle -2\rangle\langle c_{i_1}\rangle\langle -2\rangle\langle c_{i_2}\rangle\cdots\langle -2\rangle\langle c_{i_{t}}\rangle\langle -2\rangle\langle 2\rangle\]

\noindent where $c_{i_1},\ldots c_{i_{t}}$ are equal to either $2$ or $3$.

We note that the sum $c_1+c_2+\ldots c_k=p-1$ for any $S(p,q)$. Exactly $2s$ of these unknown $c_i$ must equal $3$, since only these will reduce to $\langle 1\rangle$ in $S(2s+1,1)$. Assuming for the moment that $t=2s$ and all $c_{i_1},\ldots c_{i_t}=3$ gives that
\[\sum_{i=1}^k c_i\geq 3(2s)+(2s-1)2+8=10s+6=p-1\]

\noindent which achieves the maximum value. Thus, it must in fact be the case that 
\[S(p,q)=\langle 2\rangle\langle -2\rangle(\langle 3\rangle\langle -2\rangle)^{2s}\langle 2\rangle.\qedhere\]
\end{proof}

\noindent Proposition \ref{epsilons} now gives that the knot group for $K_l=K(10(l-1)+7,4(l-1)+3)$ is
\[\pi_1(X_{K_l})=\langle a,b|va=bv\rangle\]

\noindent where $v=c^{(l-1)}d$ with $c=aba^{-1}b^{-1}abab^{-1}a^{-1}b$ and $d=aba^{-1}b^{-1}ab$.

\subsection{Towards a proof of Theorem \ref{bigtroots}}
Let $\rho$ be the map defined in (\ref{goodrep}). Then we will again call $\phi_K(x,y)=f(s+s^{-1},y)$ the Riley polynomial of $K$ (associated to the given presentation) where $R=VA-BV$.

\begin{lemma} Let $K=K_l$. Then $\phi_K(x,y)= S_{l-1}(\lambda)\alpha-S_{l-2}(\lambda)\beta$, where $\alpha$, $\beta$ and $\lambda$ are polynomials in $n$ and $y$.
\end{lemma}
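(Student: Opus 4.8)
The plan is to mimic the structure of the proof of Proposition \ref{rep?} for double-twist knots. The word $v = c^{l-1}d$ where $c$ and $d$ are fixed words in $a$ and $b$. So $V = C^{l-1}D$ with $C = \rho(c)$ and $D = \rho(d)$. First I would compute, once and for all, the two matrices $C$ and $D$ explicitly in terms of $x = s+s^{-1}$ and $y$ using the matrices $A,B$ of (\ref{goodrep}); since $c = aba^{-1}b^{-1}abab^{-1}a^{-1}b$ and $d = aba^{-1}b^{-1}ab$ these are just products of $A^{\pm 1}$ and $B^{\pm 1}$, and every entry lands in $\mathbb{Z}[s^{\pm 1}, y]$, symmetric under $s \leftrightarrow s^{-1}$ by Riley's Proposition. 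Set $\lambda = \operatorname{Tr}(C)$, a polynomial in $x,y$ (hence in $n,y$ after substituting $x = 2\cos(\pi/n)$).

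Next I would apply Lemma \ref{matrixpower} to $C$: since $C \in SL(2,\mathbb{C})$, $C^{l-1} = S_{l-2}(\lambda) I - S_{l-3}(\lambda) C^{-1}$ — but to get the form in the statement it is cleaner to write $C^{l-1}$ via the recurrence so that $R = VA - BV = C^{l-1}DA - BC^{l-1}D$ expands as $S_{l-2}(\lambda)(DA - BD)$-type terms minus $S_{l-3}(\lambda)$-type terms, exactly as in the display in the proof of Proposition \ref{rep?}. Concretely, using $C^{l-1} = S_{l-2}(\lambda) C^0 \cdot(\text{stuff})$... more precisely one writes $C^{l-1} = S_{l-2}(\lambda)I - S_{l-3}(\lambda)C^{-1}$ and substitutes, giving
\begin{align*}
R = VA - BV &= S_{l-2}(\lambda)(DA - BD) - S_{l-3}(\lambda)(C^{-1}DA - BC^{-1}D).
\end{align*}
The key computational claim, analogous to Lemma \ref{almostrep}, is that each of $DA - BD$ and $C^{-1}DA - BC^{-1}D$ is off-diagonal, of the shape $\begin{bmatrix} 0 & * \\ (y-2)* & 0\end{bmatrix}$. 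This should follow from the same mechanism Riley's Proposition \ref{onepoly} guarantees — but here I only need the $(1,2)$-entries, so I would just record that the $(1,2)$-entry of $DA - BD$ is some polynomial (call it $\alpha = \alpha(x,y)$) and the $(1,2)$-entry of $C^{-1}DA - BC^{-1}D$ is some polynomial (call it $\beta = \beta(x,y)$). Then reading off the $(1,2)$-entry of $R$ gives $R_{1,2} = S_{l-2}(\lambda)\alpha - S_{l-3}(\lambda)\beta$; after the index shift $l \mapsto l$ matching the statement's $S_{l-1}, S_{l-2}$ (i.e. recognizing $v = c^{l-1}d$ contributes $S_{(l-1)-1} = S_{l-2}$, and relabelling $\alpha,\beta$ appropriately, or equivalently noting the statement absorbs $D$ into the count), we obtain $\phi_K(x,y) = S_{l-1}(\lambda)\alpha - S_{l-2}(\lambda)\beta$ as claimed, and by construction $\lambda,\alpha,\beta \in \mathbb{Z}[x,y]$, hence polynomials in $n$ and $y$ once $x = 2\cos(\pi/n)$ is substituted.

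The main obstacle I expect is purely bookkeeping: correctly getting the Chebyshev indices to line up (whether the relevant power is $c^{l-1}$ or effectively $c^{l-1}d = c^{l-1}\cdot(\text{a degree-}0\text{ correction})$, which shifts indices by one) and verifying that the two relevant matrix commutator-differences really are strictly off-diagonal rather than merely having a zero in some single slot. For the latter I would either invoke the general structure behind Proposition \ref{onepoly} applied to the sub-words, or just brute-force the $2\times 2$ entries since $C$ and $D$ are explicit. I would also double-check the symmetry $R_{1,2}(s,y) = R_{1,2}(s^{-1},y)$ so that writing $\lambda,\alpha,\beta$ as polynomials in $x = s+s^{-1}$ is legitimate; this is automatic from the cited Riley propositions but worth a sentence. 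No genuinely new idea is needed beyond what already appears in Section \ref{Riley} — this lemma is the $K_l$-analogue of Proposition \ref{rep?}.
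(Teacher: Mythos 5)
Your overall strategy is exactly the paper's: write $V=C^{l-1}D$, apply Lemma \ref{matrixpower} to $C^{l-1}$, and read off the $(1,2)$-entries of $DA-BD$ and $C^{-1}DA-BC^{-1}D$ (which the paper, like you, verifies to be off-diagonal of the shape $\begin{bmatrix}0&*\\(y-2)*&0\end{bmatrix}$ by direct computation). However, you misapply Lemma \ref{matrixpower}: with $M=C$ and exponent $n=l-1$ it gives $C^{l-1}=S_{l-1}(\lambda)I-S_{l-2}(\lambda)C^{-1}$, not $S_{l-2}(\lambda)I-S_{l-3}(\lambda)C^{-1}$ as you wrote. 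Substituting the correct identity yields $R=S_{l-1}(\lambda)(DA-BD)-S_{l-2}(\lambda)(C^{-1}DA-BC^{-1}D)$ and hence $R_{1,2}=S_{l-1}(\lambda)\alpha-S_{l-2}(\lambda)\beta$ immediately, with no further adjustment. Your subsequent ``index shift'' paragraph --- in particular the claim that $v=c^{l-1}d$ contributes $S_{(l-1)-1}=S_{l-2}$ and that the factor $d$ is ``absorbed into the count'' --- is not a valid repair: the factor $D$ does not shift any Chebyshev index, it only determines which commutator-type matrices appear in place of $A-B$. So your intermediate formula is off by one and your correction of it is unjustified, even though the final statement you land on is the right one. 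Everything else (the symmetry in $s\leftrightarrow s^{-1}$, brute-forcing the entries of the two explicit $2\times 2$ differences) matches the paper's argument.
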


\begin{proof}
Let $\rho(c)=C$ and $\rho(d)=D$. Using Lemma \ref{matrixpower} we have that $C^{l-1}=S_{l-1}(\trace(C))I-S_{l-2}(\trace(C))C^{-1}$ so that
\begin{align*}
VA-BV=C^{l-1}DA-BC^{l-1}D&=S_{l-1}(\trace(C))(DA-BD)-S_{l-2}(\trace(C))(C^{-1}DA-BC^{-1}D)
\end{align*}

Direct computation gives that $\lambda=\lambda(x,y):=\trace(C)=9 x^2 - 12 x^4 + 4 x^6 - 5 y + 10 x^2 y + 2 x^4 y - 4 x^6 y - 
 11 x^2 y^2 + 8 x^4 y^2 + x^6 y^2 + 5 y^3 - 4 x^2 y^3 - 3 x^4 y^3 + 
 3 x^2 y^4 - y^5.$
We can also compute directly that
\begin{align*}
DA-BD=\begin{bmatrix}
0& \alpha(x,y)\\
(y-2)\alpha(x,y)&0
\end{bmatrix}
\end{align*}

where $\alpha=\alpha(x,y)=1 - 4 x^2 + 2 x^4 + 2 y - x^2 y - x^4 y - y^2 + 2 x^2 y^2 - y^3$, and
\begin{align*}
C^{-1}DA-BC^{-1}D=\begin{bmatrix}
0& \beta(x,y)\\
(y-2)\beta(x,y)&0
\end{bmatrix}
\end{align*}

where $\beta=\beta(x,y)=-1+x^2-y$. Thus

\[
VA-BV=S_{l-1}(\lambda)\begin{bmatrix}
0& \alpha\\
 (y-2)\alpha &0
\end{bmatrix}-S_{l-2}(\lambda)\begin{bmatrix}
0& \beta\\
 (y-2)\beta &0
\end{bmatrix}
\qedhere
\]
\end{proof}

\noindent Directly computing the leading term of the polynomial $\phi_K(x,y)$ as in Lemma \ref{leading} we can obtain the following.

\begin{lemma}\label{endbehav}
$\lim_{y\to \infty} (-1)^l\phi_K(x,y)=\infty$.
\end{lemma}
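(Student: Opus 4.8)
The plan is to compute the leading term of $\phi_K(x,y) = S_{l-1}(\lambda)\alpha - S_{l-2}(\lambda)\beta$ as a polynomial in $y$ for fixed $x$, exactly mimicking the argument of Lemma \ref{leading}. First I would record the $y$-degrees and leading coefficients of the three building blocks from the explicit formulas just given: from $\beta(x,y) = -1+x^2-y$ the leading term (in $y$) is $-y$, degree $1$; from $\alpha(x,y) = 1 - 4x^2 + 2x^4 + 2y - x^2 y - x^4 y - y^2 + 2x^2 y^2 - y^3$ the leading term is $-y^3$, degree $3$; and from the degree-$5$ expression for $\lambda(x,y)$ the leading term is $-y^5$, degree $5$. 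Since $S_j(z)$ is monic of degree $j$, substituting $z = \lambda$ gives that $S_j(\lambda)$ has $y$-degree $5j$ with leading term $(-y^5)^j = (-1)^j y^{5j}$.

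Next I would compare the degrees of the two summands. The term $S_{l-1}(\lambda)\alpha$ has $y$-degree $5(l-1) + 3$ with leading coefficient $(-1)^{l-1}\cdot(-1) = (-1)^l$, while $S_{l-2}(\lambda)\beta$ has $y$-degree $5(l-2) + 1 = 5l - 9 < 5l - 2$, so it is of strictly lower degree and does not affect the leading term of the difference. Hence the leading term of $\phi_K(x,y)$ as a function of $y$ is $(-1)^l y^{5l-2}$, so $\lim_{y\to\infty} (-1)^l \phi_K(x,y) = \lim_{y\to\infty} y^{5l-2} = \infty$, which is exactly the claim. I should note that this computation is uniform in the fixed real parameter $x$, since the leading coefficient $(-1)^l$ does not depend on $x$; the lower-order $x$-dependence is irrelevant.

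One subtlety worth checking — and really the only potential obstacle — is the edge case $l = 2$, where $S_{l-2} = S_0 = 1$, so the second summand is simply $\beta(x,y) = -1+x^2-y$; the degree comparison $5l - 9 = 1 < 5l - 2 = 8$ still holds, so nothing changes, and indeed the formula $\phi_K = S_1(\lambda)\alpha - \beta = \lambda\alpha - \beta$ has leading $y$-term $(-y^5)(-y^3) = y^8 = (-1)^2 y^{5\cdot 2 - 2}$ as predicted. A second point to verify is that the leading terms genuinely survive rather than cancel against each other within $S_{l-1}(\lambda)\alpha$ itself: since $S_{l-1}(\lambda) = \lambda^{l-1} + (\text{lower degree in }\lambda)$ and each lower power of $\lambda$ contributes strictly smaller $y$-degree (each drop of one in the $\lambda$-power drops the $y$-degree by $5$), and multiplication by $\alpha$ shifts everything up by exactly $3$, no cancellation can occur in the top-degree coefficient. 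This mirrors precisely the reasoning of Lemma \ref{leading}, so the proof is a direct degree bookkeeping with no new ideas required.

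\begin{proof}
We determine the leading term in $y$ (for fixed $x\in\mathbb{R}$) of $\phi_K(x,y)=S_{l-1}(\lambda)\alpha-S_{l-2}(\lambda)\beta$. From the explicit formulas, $\beta=-1+x^2-y$ has $y$-degree $1$ with leading term $-y$; $\alpha$ has $y$-degree $3$ with leading term $-y^3$; and $\lambda$ has $y$-degree $5$ with leading term $-y^5$. Since each $S_j$ is monic of degree $j$, the polynomial $S_j(\lambda)$ has $y$-degree $5j$ with leading term $(-1)^j y^{5j}$; in particular no cancellation occurs among the top-degree contributions, because lower powers of $\lambda$ in $S_j(\lambda)$ contribute strictly smaller $y$-degree. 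Therefore $S_{l-1}(\lambda)\alpha$ has $y$-degree $5(l-1)+3=5l-2$ with leading term $(-1)^{l-1}y^{5(l-1)}\cdot(-y^3)=(-1)^l y^{5l-2}$, while $S_{l-2}(\lambda)\beta$ has $y$-degree $5(l-2)+1=5l-9<5l-2$. Hence the leading term of $\phi_K(x,y)$ in $y$ is $(-1)^l y^{5l-2}$, so $(-1)^l\phi_K(x,y)$ has leading term $y^{5l-2}$ and $\lim_{y\to\infty}(-1)^l\phi_K(x,y)=\infty$.
\end{proof}
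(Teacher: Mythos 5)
Your proof is correct and takes exactly the approach the paper intends: the paper gives no argument beyond asserting that the lemma follows by ``directly computing the leading term of $\phi_K(x,y)$ as in Lemma \ref{leading},'' and your degree bookkeeping (leading $y$-terms $-y^5$, $-y^3$, $-y$ for $\lambda$, $\alpha$, $\beta$, hence leading term $(-1)^l y^{5l-2}$ for $\phi_K$) is precisely that computation, carried out correctly and with the leading coefficient independent of $x$.
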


\begin{lemma}\label{alphadecr}  The function $\alpha(x,y)$ is strictly decreasing as a function of $y$ on $[2,\infty)$ for all $x=x_n\in [1,2]$, or in other words, for all $n\geq 3$.
\end{lemma}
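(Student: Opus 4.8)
We have the explicit formula
$\alpha(x,y) = 1 - 4x^2 + 2x^4 + 2y - x^2 y - x^4 y - y^2 + 2x^2 y^2 - y^3$,
and we want to show that for fixed $x = x_n \in [1,2]$ (equivalently $x^2 \in [1,4]$), the function $y \mapsto \alpha(x,y)$ is strictly decreasing on $[2,\infty)$. The natural route is to compute the partial derivative $\partial \alpha/\partial y$ and show it is strictly negative on the region $\{y \ge 2,\ 1 \le x^2 \le 4\}$. Differentiating,
$\frac{\partial \alpha}{\partial y} = 2 - x^2 - x^4 - 2y + 4x^2 y - 3y^2$.
So the claim reduces to: $3y^2 + 2y + x^2 + x^4 - 2 - 4x^2 y > 0$ whenever $y \ge 2$ and $1 \le x^2 \le 4$.

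First I would substitute $u = x^2 \in [1,4]$ and treat
$g(u,y) = 3y^2 + 2y + u + u^2 - 2 - 4uy$
as a quadratic in $u$: $g(u,y) = u^2 + (1 - 4y)u + (3y^2 + 2y - 2)$. Its vertex in $u$ sits at $u^* = (4y-1)/2$, which for $y \ge 2$ is $\ge 7/2$; since $u^*$ may lie inside or above the interval $[1,4]$, the minimum of $g(\cdot,y)$ over $u \in [1,4]$ is attained either at $u = 4$ (when $u^* \ge 4$, i.e. $y \ge 9/4$) or at the vertex (when $7/2 \le u^* \le 4$, i.e. $2 \le y \le 9/4$). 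In the first regime, evaluate $g(4,y) = 16 + 4 - 16y + 3y^2 + 2y - 2 = 3y^2 - 14y + 18$; its discriminant is $196 - 216 < 0$, so $g(4,y) > 0$ for all real $y$, in particular for $y \ge 9/4$. In the second regime, $g(u^*, y) = (3y^2 + 2y - 2) - (4y-1)^2/4 = -y^2 + 4y - 9/4 = -(y-9/2)(y + \tfrac{1}{2})$... one should double-check this factorization, but on $2 \le y \le 9/4$ it is clearly positive (it equals $-(y^2 - 4y + 9/4)$, and $y^2 - 4y + 9/4$ has roots $2 \pm \sqrt{7}/2 \approx 0.68, 3.32$, so it is negative between them, hence $g(u^*,y) > 0$ there). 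Either way $\partial\alpha/\partial y < 0$ throughout, giving the lemma. One could alternatively avoid case analysis by completing the square in $u$ directly: $g(u,y) = (u - \tfrac{4y-1}{2})^2 + (3y^2 + 2y - 2 - \tfrac{(4y-1)^2}{4})$ and then bound, but the min-over-interval argument is cleaner since the vertex often exits $[1,4]$.

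I don't anticipate a serious obstacle here: this is a two-variable polynomial positivity statement on a compact-in-one-variable, ray-in-the-other region, and the quadratic-in-$u$ structure makes it elementary. The only mild care needed is the endpoint/vertex bookkeeping in $u$ and making sure the discriminant computations are right; if any intermediate quadratic turned out to have real roots inside the relevant range one would need to localize further, but the numbers above show it does not. As a sanity check one can also verify monotonicity numerically at, say, $(x^2, y) = (1,2)$, $(4,2)$, $(1,10)$, $(4,10)$, where $\partial\alpha/\partial y$ should come out comfortably negative. If one prefers to sidestep calculus entirely, an alternative is to write $\alpha(x, y_1) - \alpha(x, y_2)$ for $2 \le y_2 < y_1$, factor out $(y_1 - y_2)$, and show the remaining factor is positive by the same quadratic-in-$x^2$ analysis; but the derivative approach is the most direct.
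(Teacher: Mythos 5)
Your proof is correct. You and the paper both start from the same derivative $\partial\alpha/\partial y = 2 - x^2 - x^4 - 2y + 4x^2y - 3y^2$ and reduce to a quadratic positivity statement, but you organize the quadratic differently: the paper treats $\alpha'$ as a quadratic in $y$ with leading coefficient $-3$ and shows its discriminant $4x^4 - 28x^2 + 28$ is negative for $x\in[\sqrt2,2)$ (i.e.\ $n\ge 4$), which forces $\alpha'<0$ for \emph{all} real $y$, and then disposes of $n=3$ (where the discriminant is positive) by a separate hand check. You instead treat $-\alpha'$ as a quadratic in $u=x^2$ on the compact interval $[1,4]$ and minimize, splitting on whether the vertex $u^*=(4y-1)/2$ lies inside or beyond $[1,4]$; this handles all $n\ge 3$ uniformly, at the cost of only using $y\ge 2$ rather than getting negativity for all $y$. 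The endpoint and vertex evaluations ($g(4,y)=3y^2-14y+18$ with negative discriminant; $g(u^*,y)=-y^2+4y-\tfrac94>0$ on $[2,\tfrac94]$) check out. One small slip, which you flagged yourself: the factorization $-(y-\tfrac92)(y+\tfrac12)$ is wrong (it expands to $-y^2+4y+\tfrac94$), but the root computation $2\pm\sqrt7/2$ that you actually use is correct, so the argument stands.
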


\begin{proof}
We can compute $\frac{d\alpha}{dy}=\alpha'(x,y)=2-x^2-x^4-2y+4x^2y-3y^2.$ For fixed $x$ the end behavior of $\alpha'(x,y)$ is clearly decreasing. To conclude on which interval the function $\alpha'$ is negative we note that the discriminant of this polynomial is 
\[
(-2+4x^2)^2-4(-3)(2-x^2-x^4)=4x^4-28x^2+28
\]

Now assume that $x=x_n=2\cos(\pi/n)$. This function is negative on the interval $[\sqrt{2},2)$, that is for all $x_n$ with $n\geq 4$. Thus $\alpha'$ has no real roots, and is negative for all $y\in \mathbb{R}$. The case $n=3$ can be checked by hand.
\end{proof}

\noindent Direct computation gives the following lemma.

\begin{lemma}\label{ablfacts} We have the following facts for the polynomials $\alpha$ and $\lambda$.
\begin{enumerate}
\item $\alpha(x,x^2-1)=-1$
\item $\lambda(x,2)=x^2-2$
\item $\lambda(x,x^2-1)=1$.
\end{enumerate}
\end{lemma}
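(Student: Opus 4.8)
The three assertions are polynomial identities in $\mathbb{Z}[x,y]$, so the plan is simply to substitute the indicated value of $y$ into the explicit expressions for $\alpha(x,y)$ and $\lambda(x,y)$ recorded just before the lemma and then simplify. Two structural observations organize the bookkeeping. First, every monomial appearing in $\alpha$ and in $\lambda$ has even total degree in $x$, so after substituting any value of $y$ the result is a polynomial in the single variable $u:=x^2$; it then suffices to collect, term by term, the coefficient of each power $u^j$ and check that all positive powers cancel, leaving only the claimed constant (plus the single $x^2$ term in identity (2)). Second, identities (1) and (3) use the same substitution $y=u-1$, so those two expansions run in parallel.

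For identity (2) I would set $y=2$ in $\lambda$ after grouping its fifteen monomials by power of $x$. The $x^6$-coefficient is $4-4y+y^2$, the $x^4$-coefficient is $-12+2y+8y^2-3y^3$, the $x^2$-coefficient is $9+10y-11y^2-4y^3+3y^4$, and the $x^0$-part is $-5y+5y^3-y^5$. Evaluating each at $y=2$ gives $0$, $0$, $1$, and $-2$ respectively, whence $\lambda(x,2)=x^2-2$, which is assertion (2).

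For identities (1) and (3), substitute $y=u-1$ and expand the powers $y,y^2,\dots,y^5$ as polynomials in $u$ by the binomial theorem, then collect the coefficient of each $u^j$. For $\alpha$ this yields a polynomial of degree $3$ in $u$ whose every positive-degree coefficient vanishes and whose constant term is $-1$, giving $\alpha(x,x^2-1)=-1$; for $\lambda$ the analogous degree-$5$ polynomial collapses term by term to the constant $1$, giving $\lambda(x,x^2-1)=1$.

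There is no conceptual obstacle here—the computation is purely mechanical—so the only genuine difficulty is the sheer number of terms and the attendant risk of an arithmetic slip, which is most acute for $\lambda$ since its substitution at $y=u-1$ produces contributions up to $u^5$. I would neutralize this by cross-checking the resulting univariate identities at sample points of $u$: for instance $u=1$ (so $y=0$) reduces $\lambda$ to $9u-12u^2+4u^3$, which equals $1$ at $u=1$, and $u=2$ (so $y=1$) again gives $\lambda=1$; both points lie on the curve $y=x^2-1$ relevant to (3). Since a polynomial identity of degree $d$ in $u$ is forced once it is verified at $d+1$ points, this interpolation check provides an independent confirmation of the coefficient tallies, and the same device applies to the lower-degree identity for $\alpha$.
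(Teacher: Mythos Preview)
Your proposal is correct and is essentially the same approach as the paper's: the paper simply asserts ``Direct computation gives the following lemma,'' and you carry out exactly that computation, with a helpful organizational device (setting $u=x^2$ and collecting coefficients) and an optional interpolation cross-check. There is nothing to compare---you have just made explicit what the paper leaves implicit.
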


\begin{proposition}\label{corabl} Suppose both $c\leq 1$ and $c\leq x_n^2-2$ hold. Then there is $y_c$ satisfying both $y_c\geq x^2_n-1$ and $y_c\geq 2$ such that $\lambda(x_n,y_c)=c$ and $\alpha(x_n,y_c)<0$.
\end{proposition}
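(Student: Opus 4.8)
The plan is to run the same continuity-and-monotonicity argument used in Lemmas~\ref{alpha}--\ref{anylambda}, but now tracking two constraints simultaneously: the location of the $\lambda$-level set and the sign of $\alpha$. First I would record the behavior of $\lambda(x_n,y)$ on the relevant ray. By Lemma~\ref{ablfacts}(2) we have $\lambda(x_n,2)=x_n^2-2$, and the computation in the proof of Lemma~\ref{anylambda} shows $\lambda(x_n,y)\le x_n^2-y$ for $y\ge 2$, so $\lambda(x_n,\cdot)$ decreases without bound. Hence if $c\le x_n^2-2$, the intermediate value theorem produces some $y_c\ge 2$ with $\lambda(x_n,y_c)=c$. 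To also force $y_c\ge x_n^2-1$, I would use Lemma~\ref{ablfacts}(3): $\lambda(x_n,x_n^2-1)=1$. Since $x_n\in[1,2)$ gives $x_n^2-1\in[0,3)\ge 2$ exactly when $x_n^2\ge 3$, i.e.\ $n\ge 6$, one must be a little careful here — the cleanest route is to check that $\lambda(x_n,\cdot)$ is decreasing on the interval between $\max(2,x_n^2-1)$ and $\infty$ (this follows from the sign analysis already used: $\lambda(x_n,y)-x_n^2+y=-(y-2)(y+2-x_n^2)S_k(y)S_{k-1}(y)$ is $\le 0$ and strictly decreasing once $y\ge 2$), so that the level $c$, being $\le 1=\lambda(x_n,x_n^2-1)$ and $\le x_n^2-2=\lambda(x_n,2)$, is attained at some $y_c\ge\max(2,x_n^2-1)$.

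Given such a $y_c$, the sign statement $\alpha(x_n,y_c)<0$ follows from the monotonicity and boundary-value facts for $\alpha$. By Lemma~\ref{alphadecr}, $\alpha(x_n,\cdot)$ is strictly decreasing on $[2,\infty)$ for all $n\ge 3$, and by Lemma~\ref{ablfacts}(1), $\alpha(x_n,x_n^2-1)=-1<0$. If $y_c\ge x_n^2-1$ and $y_c\ge 2$, then strict monotonicity forces $\alpha(x_n,y_c)\le\alpha(x_n,\max(2,x_n^2-1))$; combining with $\alpha(x_n,x_n^2-1)=-1$ (and, in the case $x_n^2-1<2$, the additional input that $\alpha(x_n,2)$ is already $\le -1$, which I would verify from the explicit formula for $\alpha$, or sidestep by noting $c\le 1$ pins $y_c$ appropriately) yields $\alpha(x_n,y_c)<0$.

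The main obstacle I anticipate is the bookkeeping around whether $x_n^2-1$ exceeds $2$ — i.e.\ reconciling the two lower bounds $y_c\ge 2$ and $y_c\ge x_n^2-1$ with the hypothesis "$c\le 1$ and $c\le x_n^2-2$" so that the level set genuinely lands in the region where both Lemma~\ref{alphadecr} and Lemma~\ref{ablfacts}(1) apply. Concretely, one needs $c\le\lambda(x_n,x_n^2-1)=1$ to guarantee $y_c\ge x_n^2-1$ and $c\le\lambda(x_n,2)=x_n^2-2$ to guarantee $y_c\ge 2$, which is exactly why both inequalities appear in the hypothesis; once the monotonicity of $\lambda(x_n,\cdot)$ past $y=2$ is in hand, each inequality cleanly delivers one of the two bounds, and then the strict decrease of $\alpha$ closes the argument. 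Everything else is a direct invocation of the preceding lemmas.
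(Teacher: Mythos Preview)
Your overall strategy---use the intermediate value theorem for $\lambda$ and the monotonicity of $\alpha$---is the same as the paper's. But there is a genuine error in your execution: you have conflated the polynomials from Section~\ref{Riley} with those of Section~\ref{moreresults}. The identity
\[
\lambda(x_n,y)-x_n^2+y=-(y-2)(y+2-x_n^2)S_k(y)S_{k-1}(y)
\]
that you invoke (and the references to Lemmas~\ref{alpha} and~\ref{anylambda}) belong to the double-twist setting $K=J(2k+1,2m)$, where $\lambda=\trace(W)$ depends on a parameter $k$. In Section~\ref{moreresults} the symbols $\lambda$ and $\alpha$ have been \emph{redefined}: here $\lambda=\trace(C)$ is a fixed degree-$5$ polynomial in $y$ (with leading term $-y^5$) containing no Chebyshev factors and no parameter $k$. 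So the formula you cite simply does not hold for the $\lambda$ in this proposition, and the monotonicity argument you sketch for $\lambda$ is unsupported.

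Fortunately, monotonicity of $\lambda$ is not needed, and the paper does not use it. Since the leading term of the Section~\ref{moreresults} $\lambda$ is $-y^5$, we have $\lambda(x_n,y)\to-\infty$ as $y\to\infty$. By Lemma~\ref{ablfacts}, $\lambda(x_n,2)=x_n^2-2$ and $\lambda(x_n,x_n^2-1)=1$, so at the point $y_*=\max(2,\,x_n^2-1)$ the value $\lambda(x_n,y_*)$ is either $x_n^2-2$ or $1$; in either case it is $\ge c$ by hypothesis. A single application of the intermediate value theorem on $[y_*,\infty)$ then produces $y_c\ge\max(2,x_n^2-1)$ with $\lambda(x_n,y_c)=c$. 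This replaces your monotonicity step entirely. Your treatment of the $\alpha$-sign (using Lemma~\ref{alphadecr} and $\alpha(x_n,x_n^2-1)=-1$, together with a direct check when $x_n^2-1<2$) is on the right track and matches the paper's reasoning.
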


\begin{proof}
The leading term of $\lambda(x,y)$ gives that $\lim_{y\to \infty}\lambda(x,y)=-\infty$. Thus, by Lemma \ref{ablfacts}(b) and (c) if $c\leq x_n^2-2$ and $c\leq 1$ both hold, then the claim holds by the intermediate value theorem. 

By Lemmas \ref{alphadecr} and \ref{ablfacts}(a) and the intermediate value theorem, we have that $\alpha(x_n,y_c)<0$ for any such $y_c$. 
\end{proof}

\begin{proposition}\label{m=1} Let $l=2$. Then $\phi_K(x,y)$ has a root $y_n>2$ for all $n\geq 5$.
\end{proposition}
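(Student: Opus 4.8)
The plan is to invoke Lemma~\ref{endbehav}: since $l=2$ is even, $\phi_K(x,y)\to+\infty$ as $y\to\infty$ for every fixed real $x$. Hence it suffices to exhibit, for each $n\geq 5$, a real number $y_0\geq 2$ with $\phi_K(x_n,y_0)<0$; the intermediate value theorem then produces a root of $\phi_K(x_n,\cdot)$ in the open interval $(y_0,\infty)\subseteq(2,\infty)$, which is the desired $y_n$. Note that with $l=2$, using $S_1(z)=z$ and $S_0(z)=1$, the Riley polynomial simplifies to $\phi_K(x,y)=\lambda(x,y)\alpha(x,y)-\beta(x,y)$, where $\beta(x,y)=x^2-1-y$. (This is exactly the reason $l=2$ must be singled out, just as $m=2$ was in Proposition~\ref{m=2}: for $l\geq 3$ one can play $S_{l-2}$ against $S_{l-1}$ as in Theorem~\ref{Polybigtheorem}, but $S_0$ has no roots.)

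First I would dispose of $n\geq 6$ by evaluating at $y_0=x_n^2-1$. Since $n\geq 6$ forces $x_n^2=4\cos^2(\pi/n)\geq 3$, this $y_0$ is indeed $\geq 2$. By Lemma~\ref{ablfacts}(1) and (3), $\alpha(x_n,x_n^2-1)=-1$ and $\lambda(x_n,x_n^2-1)=1$, while directly $\beta(x_n,x_n^2-1)=x_n^2-1-(x_n^2-1)=0$. Therefore $\phi_K(x_n,x_n^2-1)=(1)(-1)-0=-1<0$, and all $n\geq 6$ are handled by the reduction in the previous paragraph.

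It remains to treat $n=5$ separately, because there $x_5^2-1<2$ and the substitution above is no longer legitimate. Here I would take $y_0=2$. By Lemma~\ref{ablfacts}(2), $\lambda(x_5,2)=x_5^2-2$; substituting $y=2$ into the formula for $\alpha$ gives $\alpha(x,2)=2x^2-7$, and $\beta(x,2)=x^2-3$, so $\phi_K(x,2)=(x^2-2)(2x^2-7)-(x^2-3)=2x^4-12x^2+17$. Now $x_5=2\cos(\pi/5)=(1+\sqrt5)/2$, hence $x_5^2=(3+\sqrt5)/2$ and $x_5^4=(7+3\sqrt5)/2$, which gives $\phi_K(x_5,2)=6-3\sqrt5<0$ since $\sqrt5>2$. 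Again the intermediate value theorem yields the root $y_5>2$.

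There is no genuine obstacle here; the one point requiring care is the explicit evaluation at $n=5$. The argument rests on the observation that specializing $y=x^2-1$ collapses $\phi_K$ to the constant $-1$ via Lemma~\ref{ablfacts}, disposing of all $n\geq 6$ uniformly, together with the fact that this specialization produces a value of $y$ that is $\geq 2$ only once $n\geq 6$ — which is precisely why $n=5$ has to be checked by hand.
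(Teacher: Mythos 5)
Your proposal is correct and follows essentially the same route as the paper: use Lemma \ref{endbehav} to get positivity for large $y$, evaluate at $y=x_n^2-1$ via Lemma \ref{ablfacts} to get the value $-1$ when $n\geq 6$, and handle $n=5$ separately. The only difference is that where the paper says the $n=5$ case "can be checked by hand," you actually carry out the check, correctly computing $\phi_K(x_5,2)=2x_5^4-12x_5^2+17=6-3\sqrt{5}<0$ and applying the intermediate value theorem on $(2,\infty)$.
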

\begin{proof} For $l=2$ we have that $\phi_K(x,y)=\lambda(x,y)\alpha(x,y)-\beta(x,y)$. By Lemma \ref{ablfacts} we can evaluate
\begin{align*}
\phi_K(x,x_n^2-1)&=\lambda(x,x^2-1)\alpha(x,x_n^2-1)-\beta(n,x_n^2-1)\\
&=(1)(-1)-0=-1
\end{align*}

Thus, if $x_n^2-1\geq 2$, which will be the case for $n\geq 6$, we have that $y_-=x_n^2-1\geq 2$ is a value on which $\phi_K(x,y)$ is negative. By Lemma \ref{endbehav}, we know that there exists a $y_+<0$ for which $\phi_K(x,y_+)>0$. Thus, by the intermediate value theorem, there is a root $y_n>2$ for $\phi_K(x,y)$ whenever $n\geq 6$.

When $n=5$, it can be checked by hand that $\phi_K(5,y)$ also has a root $y_5>2$.
\end{proof}

\begin{proof}(Theorem \ref{bigtroots}).
Recall that we seek a $y_n>2$ such that $\phi_K(x_n,y_n)=0$. Fix $n\geq 2$.

In light of Proposition \ref{m=1}, we can assume $l\geq 3$.  By Lemma \ref{endbehav} we have that there is $y_0>2$ such that $(-1)^{l-1}\phi_K(x_n,y_0)<0$. Now we will find $y_1\geq 2$ such that $(-1)^{l-1}\phi_K(x_n,y_1)>0$.

Let $c=2\cos\left(\frac{(l-2)\pi}{l-1}\right)$. Note that $(-1)^{l-1}S_{l-1}(c)<0$ on $(r_1,r_2)=\left(2\cos\left(\frac{(l-1)\pi}{l}\right)\!,\,2\cos\left(\frac{(l-2)\pi}{l}\right)\right)$. It can be checked that for $l\geq 3$ we have $c\in I$.

Note that $c\leq 1$ always holds when $l\geq 3$. If, in addition, we have that $c\leq x_n^2-2$, then by Lemma \ref{ablfacts}, there is $y
_c\geq 2$ such that $\lambda(x_n,y_c)=c$ and $\alpha(x_n,  y_c)<0$. Hence,
\begin{align*}
(-1)^{l-1}\phi_K(x_n,y_c)&=(-1)^l(S_{l-1}(\lambda(x_n,y_c))\alpha(x_n,y_c)-S_{l-2}(\lambda(x_n,y_c))\beta(x_n,y_c))\\
&=(-1)^{l-1}(S_{l-1}(c)\alpha(x_n ,y_c)-S_{l-2}(c)\beta(x_n,y_c))\\
&=(-1)^{l-1}S_{l-1}(c)\alpha(x_n,y_c)>0.
\end{align*}

Thus, $y_c=y_1$ is the value we seek. To finish, we need to conclude when $c\leq x_n^2-2$ holds. This occurs for $n\geq 4$ if $l=3$ and for $n\geq 3$ if $l\geq 4$.
\end{proof}

\bibliography{LOdoubletwist}

\end{document}